\newcommand{\R}{\mathbb{R}}
\newcommand{\E}{\mathbb{E}}
\newcommand{\esssup}{\mathop{\mathrm{ess\,sup}}}
\theoremstyle{definition}
\newtheorem{defn}{Definition}[section]
\newtheorem{ex}[defn]{Example}
\newtheorem{rmk}[defn]{Remark}
\newtheorem*{notation}{Notation}
\newtheorem{assumptions}[defn]{Assumptions}
\theoremstyle{plain}
\newtheorem{thm}[defn]{Theorem}
\newtheorem*{thm*}{Theorem}
\newtheorem{lemma}[defn]{Lemma}
\newtheorem{prop}[defn]{Proposition}
\newtheorem*{teoremasingolo}{Theorem}
\newtheorem*{fact}{Fact}
\newtheorem*{assum*}{Assumptions}
\title{Nonlinear rough Fokker--Planck equations}
\author{Fabio Bugini\thanks{Technische Universit\"at Berlin (fabio.bugini@tu-berlin.de)} , Peter K. Friz\thanks{Technische Universit\"at Berlin and Weierstraß-Institut (friz@math.tu-berlin.de)} , Wilhelm Stannat\thanks{Technische Universit\"at Berlin (stannat@math.tu-berlin.de)}}
\begin{document}

\maketitle

\begin{abstract}
McKean–-Vlasov SDEs describe systems where the dynamics depend on the law of the process. The corresponding Fokker--Planck equation is a nonlinear, nonlocal PDE for the corresponding measure flow. In the presence of common noise and conditional law dependence, the evolution becomes random and is governed by a stochastic Fokker–-Planck equation; that is, a nonlinear, nonlocal SPDE in the space of measures. (Such equations constitute an important ingredient in the theory of mean-field games with common noise.) Well-posedness of such SPDEs is a difficult problem, the best result to date due to Coghi -- Gess (2019), which however comes with dimension-dependent regularity assumptions. In the present work, we show how rough path techniques can circumvent these entirely. Hence, and somewhat contrarily to common believe, the use of rough paths leads to substantially less regularity demands on the coefficients than methods rooted in classical stochastic analysis methods. 
\end{abstract}

\smallskip
\noindent \textbf{Keywords.} Fokker--Planck equations, rough paths, McKean--Vlasov equations. 

\smallskip
\noindent \textbf{MSC (2020).} 60L50, 
35Q84. 

\tableofcontents

\section{Introduction}

In this paper we present an existence and uniqueness result for measure-valued Fokker--Planck equations of the form
\begin{multline} \label{eq:meanfieldroughPDEs_intro}
        d\mu_t = \Bigg(\frac{1}{2} \sum_{i,j=1}^d \frac{\partial^2}{\partial x^j \partial x^i} (a^{ij}(t,x,\mu_t) \mu_t) - \sum_{i=1}^d \frac{\partial}{\partial x^i} (b^i(t,x,\mu_t)\mu_t) \Bigg) \, dt + \\ - \sum_{\kappa=1}^n  \sum_{i=1}^d \frac{\partial}{\partial x^i} (f_\kappa^i(t,x,\mu_t) \mu_t) \, dW^\kappa_t
\end{multline}
on a finite time interval $[0,T]$.
The solution is a curve $\mu:[0,T] \to \mathcal{P}(\R^d)$ in the space of probability measures on $\R^d$ and $W:[0,T] \to \R^n$ is an $\alpha$-H\"older continuous path, in the rough regime $\alpha \in (1/3,1/2]$. The correct rough path formulation is left to \cref{def:solution_meanfieldroughPDE} and notably accomodates Brownian randomisation $W \rightsquigarrow B^0 (\omega)$, thereby covering the popular situation of McKean--Vlasov SDEs with common noise, cf. \eqref{eq:McKeanVlasovSDE_intro} below.  
The coefficients \begin{equation*}
    (a,b,f): [0,T] \times \R^d \times \mathcal{P}(\R^d) \to \R^{d \times d} \times \R^d \times \R^{d \times n} 
\end{equation*} 
also depend on the solution itself, potentially in a nonlinear manner. Equations of the form \eqref{eq:meanfieldroughPDEs_intro} are often called nonlinear and nonlocal.  
For any $(t,x,\mu) \in [0,T] \times \R^d \times \mathcal{P}(\R^d)$, the ($d\times d$)-matrix $a(t,x,\mu)$ is symmetric and positive semi-definite. 
The main result of our work is the following 

\begin{teoremasingolo}[See \cref{thm:mainresult} below] Let $\nu \in \mathcal{P}(\R^d)$ such that $\int_{\R^d} |x|^2 \, \nu(dx)$ is finite. Then, under natural regularity assumptions on the coefficients $a,b,f$ (see \cref{assumptions_existence} and \cref{assumptions_uniqueness} below), there exists a unique solution $\mu$ to the {\em nonlinear, nonlocal rough Fokker-Planck equation} \eqref{eq:meanfieldroughPDEs_intro}, in the precise sense of \cref{def:solution_meanfieldroughPDE} below, such that $\mu_0=\nu$.      
\end{teoremasingolo}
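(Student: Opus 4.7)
The plan is to reduce the nonlinear, nonlocal problem \eqref{eq:meanfieldroughPDEs_intro} to a fixed-point iteration built on top of a linear rough Fokker--Planck theory. Specifically, I would freeze a continuous curve $\nu \in C([0,T]; \mathcal{P}_2(\R^d))$ into the coefficients to obtain the \emph{linear} rough PDE with data $a^\nu(t,x) := a(t,x,\nu_t)$, and likewise for $b^\nu, f^\nu$, with initial condition $\mu_0 = \nu$. This linear equation, interpreted weakly in $x$ against smooth compactly supported test functions $\varphi$ and in the controlled-rough-path sense in time for the $d\mathbf{W}$-term, should admit a unique measure-valued solution with quantitative a priori estimates. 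The natural route is duality with the backward dual rough Kolmogorov equation
\[ -du_t = \mathcal{L}^\nu_t u_t \, dt + (f^\nu_\kappa \cdot \nabla) u_t \, d\mathbf{W}^\kappa_t, \qquad u_T = \varphi, \]
whose rough PDE well-posedness depends only on a fixed number of spatial derivatives of the coefficients, independent of $d$. This is precisely the point where the rough-path approach pays off, since the classical It\^o/Stratonovich treatment would force dimension-dependent derivative counts when absorbing It\^o corrections of the form $\frac12 \partial_i \partial_j(f^i_\kappa f^j_\kappa \,\cdot\,)$.

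With the linear theory at hand, I would introduce the solution map $\Phi : \nu \mapsto \mu$ on the complete metric space $(C([0,T]; \mathcal{P}_2(\R^d)), d_\infty)$, where $d_\infty(\nu,\tilde\nu) = \sup_{t\le T} W_2(\nu_t,\tilde\nu_t)$. Three ingredients drive existence: (i) a uniform second-moment bound $\sup_{t\le T} \int |x|^2 \mu_t(dx) < \infty$, obtained by testing against a smooth truncated quadratic and using positive semi-definiteness of $a$ together with the linear growth hypotheses; (ii) an equicontinuity estimate in $t$ with modulus controlled by the rough path norm of $\mathbf{W}$ and uniform controlled-norm bounds on the coefficients; and (iii) a stability estimate for the linear equation under $d_\infty$-perturbations of $\nu$. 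Combining (i)--(ii) with Prokhorov yields relative compactness of $\Phi(C([0,T];\mathcal{P}_2(\R^d)))$ in $d_\infty$, and (iii) yields sequential continuity of $\Phi$; a Schauder-type fixed-point theorem then produces a solution. On a sufficiently small time interval one may even upgrade this to a Banach contraction and then concatenate.

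For uniqueness, I would take two solutions $\mu,\tilde\mu$ with the same initial datum. Their difference satisfies an inhomogeneous linear rough Fokker--Planck equation whose source is controlled, via the Lipschitz continuity of $(a,b,f)$ in the measure argument (with respect to $W_2$), by $W_2(\mu_s,\tilde\mu_s)$ itself. Testing against a suitable smooth function, applying the linear stability estimate, and invoking Gr\"onwall's inequality closes the argument.

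The hard part will be (iii) and the associated continuity of $\Phi$: tracking the dependence of the rough integrals against $\mathbf{W}$ on the frozen curve $\nu$ requires uniform controlled-path bounds on the Gubinelli derivatives of the integrands, which by the chain rule involve a Lions/linear-functional derivative of $f$ in the measure argument composed with the evolving $\mu$. Ensuring that these controlled norms are uniform over bounded families in $d_\infty$ and continuous under $d_\infty$-convergence, while only assuming the minimal regularity announced in \cref{assumptions_existence}--\cref{assumptions_uniqueness}, is the technical heart of the argument. Once this continuity is established, both the Schauder fixed point and the Gr\"onwall uniqueness step reduce to relatively standard rough-analytic manipulations.
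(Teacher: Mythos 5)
Your high-level strategy (freeze the measure curve, solve a linear rough Fokker--Planck equation, prove linear uniqueness by duality with the backward Kolmogorov equation, then close the loop with a fixed-point and a Gr\"onwall-type argument) is a legitimate and classical pathway, and the duality ingredient is indeed shared with the paper. But the paper takes a genuinely different and, in the end, cleaner route: it does \emph{not} perform a fixed-point iteration at the level of measure-valued paths at all. Instead, existence is obtained by defining $\mu_t := \mathcal{L}_{X_t}$ for $X$ the $L_{4,\infty}$-integrable solution to the McKean--Vlasov \emph{rough SDE} \eqref{eq:McKean-VlasovroughSDE_existence}, whose well-posedness (including the nonlinear measure dependence in the rough term, via Lions derivatives along $L_p(\Omega)$ sub-Banach spaces) is imported wholesale from \cite{FHL25}; the rough It\^o formula then shows this $\mu$ satisfies \cref{def:solution_meanfieldroughPDE}. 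Uniqueness also goes through the probabilistic representation: one freezes the candidate solution $\mu$, shows by duality (\cref{prop:duality}, leaning on \cite{BFS24}) that $\mu$ must coincide with the law of the frozen linear rough SDE solution $X^\mu$, and then observes that $X^\mu$ is automatically the solution to the McKean--Vlasov rough SDE, so uniqueness is inherited. Thus the contraction lives at the level of stochastic processes inside \cite{FHL25}, not at the level of measure curves.

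Two substantive concerns with your proposal. First, the Gr\"onwall step for uniqueness does not directly apply in the rough regime: the rough integral contribution to $\langle \mu_t-\tilde\mu_t, \varphi\rangle$ over $[0,t]$ scales like $|t|^\alpha$ with $\alpha < 1$ and does not vanish faster than the left-hand side as $t\to 0$, so the usual integral inequality is not available. You would instead need a contraction on a short time horizon (as you yourself mention as an alternative) and concatenation, together with a priori regularity on the candidate solutions to guarantee the frozen controlled vector fields belong to the right class (this is precisely what conditions 1.\ and 2.\ of \cref{thm:uniqueness_meanfieldroughPDEs} and \cref{prop:stabilityundercomposition_uniqueness} encode; the paper establishes uniqueness only within that class). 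Second, and more importantly, the ``technical heart'' you identify --- uniform controlled-norm bounds and $d_\infty$-continuity of the solution map $\Phi$ --- is exactly the place where a direct PDE-level fixed-point becomes delicate, and the chief motivation of the paper is that the McKean--Vlasov rough SDE machinery handles it cleanly with dimension-independent regularity assumptions. Without invoking a result of that type, it is not clear that your sketch yields the advertised dimension-independence; you would likely re-encounter the issues that force $m>d/2+2$ in \cite{CG19}. So your plan is viable in outline but the execution would require, in one form or another, the rough stochastic differential equation theory that the paper relies on, and the two hard steps you flag are precisely where the paper's probabilistic representation does the work.
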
 \medskip

\textbf{Motivation. } For sake of better readability let us assume that all the object introduced in this informal discussion are one-dimensional. 
Let $N \in \mathbb{N}_{\ge 1}$ and let $B,B^0,B^1,\dots,B^N$ be independent Brownian motions on some probability space $(\Omega,\mathcal{F},\mathbb{P})$. 
Let us consider a system of $N$ interacting particles with common noise, whose dynamics is described via \begin{equation*}
    \begin{cases}
        dX^{i,N}_t = b_t(X^{i,N}_t, \mu^N_t) \, dt + \sigma_t (X^{i,N}_t, \mu^N_t) \, dB^i_t + f_t (X^{i,N}_t, \mu^N_t) \circ dB^0_t, \ t \in [0,T] \\ \hfill i=1,\dots,N \\
        \mu^N_t = \frac{1}{N} \sum_{j=1}^N \delta_{X^{j,N}_t} .        
    \end{cases}
\end{equation*}
Notice how the position $X^{i,N}$ of every particle also depends on the average position of all the other particles in the system, through the empirical measure $\mu^N$. This interaction is often referred to as being of mean-field type. 
Observe in addition how each one of the particles is influenced by two independent sources of noise: an individual one $B^i$, specific for the particle $X^{i,N}$, and a common noise $B^0$, visible by every particle. 
For what comes later in the paper, it is convenient to think the latter as a Stratonovich noise; the It\^o case can be obtained by changing the drift coefficient $b$, 
according to the It\^o-Stratonovich correction. 
Assume that the initial conditions $\{X^{i,N}_0\}_{i=1,\dots,N}$ are exchangeable, conditionally independent given $B^0$ and indentically distributed (denote by $\nu:=\text{Law}(X_0^{1,N})$). 
As the number $N$ of particles grows to $+\infty$, particles become  asymptotically independent, conditional on $B^0$; the effect of the individual noises averages out, but the influence of the common noise is still present. 
At least formally, every $X^{i,N}$ converges (in a suitable sense) to independent copies of $X$, being $X$ the solution to 
\begin{equation} \label{eq:McKeanVlasovSDE_intro}
    \begin{cases}
        dX_t = b_t(X_t, \mu_t) \, dt + \sigma_t (X_t, \mu_t) \, dB_t + f_t (X_t, \mu_t)  \circ dB^0_t, \quad t \in [0,T] \\
        \mu_t = \text{Law}(X_t \mid \mathcal{F}_t^{B^0})  \\
        \text{Law}(X_0) = \nu , 
    \end{cases}
\end{equation}
where $\mathcal{F}_t^{B^0} = \sigma(B^0_r , \ 0 \le r \le t)$.
This phenomenon is know as \textit{conditional propagation of chaos}. For a proper discussion see, for instance, \cite[Section 2.1.2]{CARMONADELARUE_volumeII}. 
Again at a formal level, the curve of random distributions $t \mapsto \mu_t := \text{Law}(X_t \mid \mathcal{F}_t^{B^0})$ is expected to be a solution to a nonlinear Fokker--Planck stochastic partial differential equation of the form \begin{equation} \label{eq:Fokker-PlanckSPDE_intro}
    d\mu_t = \left( \frac{1}{2} \partial^2_x (\sigma_t(x,\mu_t)^2\mu_t) - \partial_x ( b_t(x,\mu_t)  \mu_t) \right) \, dt  - \partial_x (f_t(x,\mu_t) \mu_t) \circ dB^0_t . 
\end{equation}
In this paper, we adopt a rough path point of view to characterize solutions to \eqref{eq:Fokker-PlanckSPDE_intro}.
Almost every trajectory $W = (B^0_t(\omega))_{t \in [0,T]}$ of the common Stratonovich noise can be lifted to a (weakly geometric) rough path $\mathbf{W}=(W,\mathbb{W})$ (see \cref{ex:liftofBrownianmotion} below). 
We can therefore consider the following mixed rough and stochastic differential equation, which naturally arises by freezing a trajectory of the common noise in equation \eqref{eq:McKeanVlasovSDE_intro}:
\begin{equation} \label{eq:McKeanVlasovroughSDE_intro}
    \begin{cases}
        dX^\mathbf{W}_t = b_t(X^\mathbf{W}_t, \mu^\mathbf{W}_t) \, dt + \sigma_t (X^\mathbf{W}_t, \mu^\mathbf{W}_t) \, dB_t + f_t (X^\mathbf{W}_t, \mu^\mathbf{W}_t)  \, d\mathbf{W}_t, \quad t \in [0,T] \\
        \mu^\mathbf{W}_t = \text{Law}(X^\mathbf{W}_t)  \\
        \text{Law}(X^\mathbf{W}_0) = \nu . 
    \end{cases}
\end{equation}
This kind of equations has been introduced and studied in \cite{FHL25}. We also give a short overview of their work in \cref{section:McKean--ValsovroughSDEs} of the present paper.  
The aim of our work is therefore to characterize, for any weakly geometric rough path $\mathbf{W}=(W,\mathbb{W})$, the curve $t \mapsto \mu_t^\mathbf{W}:=\text{Law}(X_t^\mathbf{W})$ as the unique solution to a measure-valued nonlinear rough partial differential equation (rough PDE) of the form \begin{equation*}
    d\mu^\mathbf{W}_t = \left( \frac{1}{2} \partial^2_x (\sigma_t(x,\mu^\mathbf{W}_t)^2 \mu^\mathbf{W}_t) - \partial_x (b_t(x,\mu^\mathbf{W}_t) \mu^\mathbf{W}_t) \right) \, dt - \partial_x (f_t(x,\mu^\mathbf{W}_t) \mu^\mathbf{W}_t) \, d\mathbf{W}_t. 
\end{equation*} 
As shown in \cite[Section 3.3]{FHL25}, also \cite[Section 5]{FLZ25}, the law of $X^\mathbf{W}_t$ yields a regular conditional distribution of the conditional law of $X_t$ given $(B^0_r, \ r \in [0,t])$, provided $\mathbf{W}$ is taken to be the Stratonovich rough path lift of $B^0$ (cf. \cref{ex:liftofBrownianmotion}). More precisely, \begin{equation*}
    \mu_t (\omega) := \text{Law}(X_t \mid \mathcal{F}^{B^0}_t) (\omega) = \mu_t^\mathbf{W} \big|_{\mathbf{W}=\mathbf{B}^{\text{Strato}}(\omega)}
\end{equation*}
for $\mathbb{P}$-almost every $\omega \in \Omega$. In particular, the objects we are concerned with in this paper are the ``right'' ones, in the sense that they coincide with objects previously treated from within It\^o calculus. Remarkably enough, the usual perception that rough path requires more regularity than It\^o based methods does not apply here: stochastic (state of the art) methods \cite{CG19} require dimenion-dependent regularity assumptions, which is not at all the methods presented here i.e. we obtain well-posedness under dimension-independent regularity assumptions. 
We also note that our rough path approach to common noise goes immediately beyond Brownian (or semimartingale) common noise, and indeed provides a satisfactory answer to the question why one should prescribe statistics for the common noise at the first place, given that one wishes to work conditionally under the common noise. \\


\textbf{Literature review. }
Well-posedness of equation \eqref{eq:meanfieldroughPDEs_intro} has been investigated in \cite{CG19} for the case where $W$ is an Itô Brownian motion (i.e., not a deterministic path).
\noindent The authors adopt a stochastic PDEs framework and establish uniqueness through a duality argument. However, their results and assumptions inherently depend on the dimension of the state space, partly due to Sobolev embeddings and other technical considerations.
(Specifically, \cite[Assumptions 5.1]{CG19} require the coefficients to have $m$ degrees of regularity, with $m>d/2+2$ where $d$ is the dimension of the state space. ) 
In \cite{CN21}, the authors are concerned with the study of equation \eqref{eq:meanfieldroughPDEs_intro} in a rough path framework. They address the challenges posed by the measure-dependent coefficient $f$ in the rough term by assuming a linear dependence on the measure. 

Central to our approach are rough stochastic differential equations \cite{FHL21, FHL25}, which ultimately allow us to accommodate, under natural regularity assumptions, the desirable general (nonlinear) dependencies in the measure, as in \cite{CG19} and required in applications \cite{CARMONADELARUE_volumeI, CARMONADELARUE_volumeII}.



In our work, we integrate rough path theory with analysis on the space of probability measures, inspired by Lions' differential calculus in measure spaces. We also draw inspiration from \cite{BCD19}, although their framework is different, as their focus is on the mean-field rough differential equations driven by arbitrary random rough paths, which (in this generality) does not come with accompanying stochastic partial differential equations.  
\\

\textbf{Structure of the paper. } In \cref{section:preliminaries}, we present some preliminaries which can be useful in understanding our work. 
We start with a very short introduction on rough path theory;
we then introduce the basics of Lions' differential calculus and present some results about Lions derivatives; we also recall the concept of differentiability along sub-Banach spaces, as introduced in \cite{FHL25}. 
When it comes to proving well-posedness of \eqref{eq:meanfieldroughPDEs_intro}, the two main tools are: a general theory on McKean--Vlasov rough SDEs (introduced in \cite{FHL25}; an overview is given in \cref{section:McKean--ValsovroughSDEs}), and a uniqueness result for the measure-independent linear equation (given in \cref{section:linearroughPDEs}; based on existence results for the dual function-valued backward equation studied in \cite{BFS24}).
In \cref{section:solvingmeanfieldroughPDEs}, we formalize the discussion on equation \eqref{eq:meanfieldroughPDEs_intro} by providing a precise definition of its solutions. 
The main results of our work, namely some existence and uniqueness statements for \eqref{eq:meanfieldroughPDEs_intro}, are given in \cref{section:existence} and \cref{section:uniqueness}, respectively. \\

\textbf{Frequently used notation. } Let $(V,|\cdot|_V)$ and $(\bar V,|\cdot|_{\bar V})$ be two Banach spaces. 
The Borel $\sigma$-algebra on $V$ is denoted by $\mathcal{B}(V)$. Unless otherwise specified, all Banach spaces are assumed to be endowed with their Borel $\sigma$-algebra.
If clear from the context, $|\cdot|_V$ and $|\cdot|_{\bar V}$ are simply denoted by $|\cdot|$. 
The tensor product of $V$ and $\bar V$ is denoted by $V \otimes \bar V$. 
Let $k \in \mathbb{N}_{\ge 1}$.
We denote by $\mathscr{L}^k(V,\bar V)$ the space of multilinear and bounded operators from $V^k$ to $\bar V$, and $\mathscr{L}:=\mathscr{L}^1$.
The norm on $\mathscr{L}^k(V,\bar V)$ is defined by $|\phi| := \sup_{v \in V^k, \sum_{i=1}^k|v^i|_V \le 1} |\phi(v)|_{\bar V}$ for any $\phi \in \mathscr{L}^k(V,\bar V)$.
The following identification holds: $\mathscr{L}^k(V,W) \equiv \mathscr{L}(V^{\otimes k},\bar V)$. We denote by $\mathrm{Bd}(V;\bar V)$ the space of bounded functions from $V$ to $\bar V$. 

Given a path $Y:[0,T] \to V$, we denote by $\delta Y_{s,t} := Y_t - Y_s$ its increment on the time-interval $[s,t]$. Given $a, b : [0,T] \times [0,T] \to V$, we write $a_{s,t}\overset{\gamma}{=}b_{s,t}$ if there is a positive constant $C>0$ independent on $s,t$ and such that $|a_{s,t} - b_{s,t}| \le C |t-s|^\gamma$, for any $s,t \in [0,T]$. 
In particular, $g:[0,T] \to V$ is $\gamma$-H\"older continuous if and only if $g_t-g_s \overset{\gamma}{=} 0$. 

Given a vector $x \in \R^d$, $x^i$ denotes its $i$-th component. Similarly, $g^i$ denotes the $i$-th component of the map $g : V \to \R^d$. The minimum between $x,y \in \R$ is denoted by $x \wedge y$.

Let $(\Omega,\mathcal{F},\mathbb{P})$ be a probability space. The law of a $V$-valued random variable $X$ is denoted by $\mathcal{L}_X$, and it is by construction a Borel probability measure on $V$. 
Measurability of mappings $(\Omega,\mathcal{F}) \to (V,\mathcal{B}(V))$ is always to be understood in strong sense.
Given $p \in [1,\infty)$, we denote by $L_p(\Omega,\mathbb{P};V)$ - or simply by $L_p(\Omega;V)$ - the space of $V$-valued random variables $X$ on $(\Omega,\mathcal{F},\mathbb{P})$ such that $\|X\|_p := \E (|X|^p)^\frac{1}{p} < +\infty$, where $\E(\cdot)$ denotes the expectation under $\mathbb{P}$. 
We denote $\|X\|_\infty := \inf\{M \in [0,\infty] \mid |X|\le M \  \mathbb{P}\text{-a.s.}\}$. 
We often identify the spaces $L_2(\Omega;\R^d \otimes \R^d) \equiv L_2(\Omega;\R^d) \otimes L_2(\Omega;\R^d)$. \\

\textbf{Acknowledgement. } 
FB is supported by Deutsche Forschungsgemeinschaft (DFG) - Project-ID 410208580 - IRTG2544 (”Stochastic Analysis in Interaction”). 
PKF and WS acknowledge support from DFG CRC/TRR 388 ``Rough
Analysis, Stochastic Dynamics and Related Fields'', Projects A02, A07 and B04 (PF) and A10, B09 (WS).

\section{Preliminaries} \label{section:preliminaries}
\subsection{Rough path theory}
For a detailed discussion on rough paths (and more) see \cite{FRIZHAIRER}. 
Let $T > 0$. 

    \begin{defn} \label{def:roughpath} (see \cite[Definition 2.1]{FRIZHAIRER})
    Let $\alpha \in (\frac{1}{3},\frac{1}{2}]$.
    An $\alpha$-\textit{rough path} on $[0,T]$ with values in $\R^n$ is a pair $\mathbf{W}=(W,\mathbb{W})$ such that \begin{enumerate} \renewcommand{\labelenumi}{(\alph{enumi})}
    \item $W:[0,T] \to \R^n$ with $\delta W_{s,t} \overset{\alpha}{=} 0$ ;
    \item $\mathbb{W}:[0,T]^2 \to \R^n \otimes \R^n$ with $\mathbb{W}_{s,t} \overset{2\alpha}{=} 0$;
    \item (Chen's relation) for any $s,u,t \in [0,T]$, it holds \begin{equation} \label{eq:chenrelation}
        \mathbb{W}_{s,t}-\mathbb{W}_{s,u}-\mathbb{W}_{u,t} = \delta W_{s,u}\otimes\delta W_{u,t}
    \end{equation} or, equivalently, $\mathbb{W}_{s,t}^{ij}-\mathbb{W}_{s,u}^{ij}-\mathbb{W}_{u,t}^{ij} = \delta W_{s,u}^i \delta W_{u,t}^j$ for any $i,j=1,\dots,n$.
\end{enumerate}
We write $\mathbf{W}\in \mathscr{C}^\alpha([0,T];\R^n)$.
\end{defn}

\begin{defn} (see \cite[ Section 2.2]{FRIZHAIRER})
    An $\alpha$-rough path $\mathbf{W}=(W,\mathbb{W})$ is said to be \textit{weakly geometric} if, for any $s,t \in [0,T]$, \begin{equation*}
    Sym(\mathbb{W}_{s,t}):=\frac{\mathbb{W}_{s,t}+\mathbb{W}_{s,t}^\top}{2} = \frac{1}{2}\delta W_{s,t} \otimes \delta W_{s,t}.
    \end{equation*}
    or, equivalently, $\frac{1}{2}(\mathbb{W}_{s,t}^{ij}+\mathbb{W}_{s,t}^{ji}) = \frac{1}{2}\delta W_{s,t}^i \delta W_{s,t}^j$ for any $i,j=1,\dots,n$. 
    We write $\mathbf{W}\in \mathscr{C}^\alpha_g([0,T];\R^n)$.
\end{defn}

\begin{rmk}
    Let $W:[0,T] \to \R^n$ be a smooth path. For any $t \in [0,T]$, the integral $\int_0^t Y_r \, dW_r$ is well-defined in $\R^d$ as a Riemann--Stieltjes integral, provided that $Y :[0,T] \to \mathscr{L}(\R^n,\R^d)$ is, for example, continuous. 
    Define \begin{equation*}
        \mathbb{W}_{s,t} = (\mathbb{W}_{s,t}^{ij})_{i,j=1,\dots,n} := \left(\int_s^t (W_r^i - W_s^i) \, dW^j_r \right)_{i,j=1,\dots,n}  
    \end{equation*}
    for any $s,t \in [0,T]$, where the integral is understood as a Riemann--Stieltjes integral.
    It is straightforward to verify that $\mathbf{W} =(W,\mathbb{W}) \in \mathscr{C}^\alpha_g([0,T];\R^n)$ for any $\alpha \in (\frac{1}{3}, \frac{1}{2}]$. 
    When $W$ is only $\alpha$-H\"older continuous, the previous Riemann--Stieltjes integral may no longer be well-defined. In such a case, we can think of $\mathbb{W}^{ij}$ as the postulated value of this integral.
    Rough path theory provides a framework to extend Riemann--Stieltjes integration with respect to $dW$ to a larger class of integrands $Y$. 
\end{rmk}

\begin{ex}[Rough path lift of a Brownian motion] \label{ex:liftofBrownianmotion}
     Let $B=(B_t)_{t \in [0,T]}$ be an $n$-dimensional Brownian motion on $(\Omega,\mathcal{F},\mathbb{P})$. 
     For any $s,t \in [0,T]$, define its iterated (Stratonovich) integral \begin{equation*}
         \mathbb{B}_{s,t}^{\text{Strato}} := \int_s^t \delta B_{s,r} \circ dB_r .
     \end{equation*}
     For definiteness, $(\mathbb{B}_{s,t}^{\text{Strato}})^{ij} (\omega) = (\int_s^t \delta B^i_{s,r} \circ dB^j_r)(\omega)$ for any $i,j=1,\dots,n$ and for any $\omega \in \Omega$. Then, for any $\alpha \in (\frac{1}{3},\frac{1}{2})$ and $\mathbb{P}$-almost surely, \begin{equation*}
         \mathbf{B}^{\text{Strato}} = (B,\mathbb{B}^{\text{Strato}}) \in \mathscr{C}^\alpha_g ([0,T];\R^n) 
     \end{equation*}
     (see \cite[Proposition 3.5]{FRIZHAIRER}). 
     The It\^o lift of $B$ is defined analogously, by setting $\mathbb{B}_{s,t}^{\text{It\^o}}:= \int_s^t \delta B_{s,r} dB_r$. However, $\mathbf{B}^{\text{It\^o}} = (B,\mathbb{B}^{\text{It\^o}})$ is not weakly geometric. The It\^o-Stratonovich correction still holds: $\mathbb{B}_{s,t}^{\text{Strato}} = \mathbb{B}_{s,t}^{\text{It\^o}} + \frac{1}{2} (t-s) I$, being $I$ the identity element of $\R^n \otimes \R^n$. 
\end{ex}

\subsection{Lions derivatives}
For a complete overview on Lions' calculus see \cite[Section 5]{CARMONADELARUE_volumeI}. 
Let $d \in \mathbb{N}_{\ge 1}$. We denote by $\mathcal{P}(\R^d)$ the set of Borel probability measures on $\R^d$.
Let $\mu,\nu \in \mathcal{P}(\R^d)$.
A coupling of $\mu$ and $\nu$ is a Borel probability measure $\pi$ on $\R^d \times \R^d$ such that, for any $A \in \mathcal{B}(\R^d)$, $\pi(A \times \R^d) = \mu(A)$ and $\pi(\R^d \times A) = \nu (A)$.  
Let $p \in [1,\infty)$. 
The set \begin{equation*}
    \mathcal{P}_p(\R^d) := \Big\{ \mu \in \mathcal{P}(\R^d) \mid \int_{\R^d} |z|^p \, \mu(dz) < +\infty \Big\}
\end{equation*}
can be endowed with the so-called $p$-Wasserstein distance $\mathcal{W}_p$, defined by \begin{equation*}
    \mathcal{W}_p(\mu,\nu) := \inf \left\{ \left(\int_{\R^d \times \R^d} |z-w|_{\R^d}^p \, \pi(dz,dw) \right)^\frac{1}{p} \mid \text{$\pi$ is a coupling of $\mu$ and $\nu$}  \right\} . 
\end{equation*} 
In particular, for any probability space $(\Omega,\mathcal{F},\mathbb{P})$ and for any pair $X,Y$ of random variables on $(\Omega,\mathcal{F},\mathbb{P})$ such that $\mathcal{L}_X = \mu$ and $\mathcal{L}_Y = \nu$, \begin{equation*}
    \mathcal{W}_p(\mu,\nu) \le \|X-Y\|_p .
\end{equation*}
\noindent By H\"older's inequality, for $p \le q$ it holds $\mathcal{W}_p(\mu,\nu) \le \mathcal{W}_q(\mu,\nu)$.  It is also possible to prove that $(\mathcal{P}_p(\R^d), \mathcal{W}_p)$ is a complete separable metric space. Moreover, the Borel $\sigma$-algebra of $\mathcal{P}_p(\R^d)$ is generated by the family of mappings $\{\mathcal{P}_p(\R^d) \ni \mu \mapsto \mu(A) \mid A \in \mathcal{B}(\R^d)\}$. In the rest of the paper we focus on the case $p=2$. \\

\noindent Let $g:\mathcal{P}_2(\R^d) \to \R$ and let $(\Omega,\mathcal{F},\mathbb{P})$ be any (atomless Polish) probability space. 
The Lions lift of $g$ is the map $\hat{g}: L_2(\Omega;\R^d) \to \R$ defined by $\hat{g}(X) := g(\mathcal{L}_X)$.
\begin{defn}
    We say that $g$ is \textit{(continuously) L-differentiable} at $\mu_0 \in \mathcal{P}_2(\R^d)$ if there exists a random variable $X_0$ on $(\Omega,\mathcal{F},\mathbb{P})$ with $\mathcal{L}_{X_0} = \mu_0$ and such that the lifted map $\hat{g}$ is (continuously) Fréchet differentiable at $X_0$.
\end{defn}

\noindent The Fréchet derivative of $\hat{g}$ at $X_0$ is denoted by $D\hat{g}(X_0)$ and can be seen as an element of $L_2(\Omega;\R^d)$, due to the identification of  $L_2(\Omega;\R^d)$ with its dual. 
It is possible to prove that this notion of differentiability is intrinsic, i.e.\ it does not depend upon the choice of the random variable whose law is $\mu_0$ nor on the probability space (cf.\ \cite[Proposition 5.24]{CARMONADELARUE_volumeI}). We say that $g$ is (continuously) L-differentiable if it is (continuously) L-differentiable at any $\mu_0 \in \mathcal{P}_2(\R^d)$. \\

\noindent Let $g:\mathcal{P}_2(\R^d) \to \R$ be continuously L-differentiable. Then the following hold:
    \begin{enumerate}\renewcommand{\labelenumi}{(\roman{enumi})}
        \item for any $\mu \in \mathcal{P}_2(\R^d)$, there exists $\partial_\mu g(\mu)(\cdot) \in L_2(\R^d,\mu;\R^d)$ such that, for any $X \in L_2(\Omega;\R^d)$ with $\mathcal{L}_X=\mu$, 
            \begin{equation*}
                D\hat{g}(X) = \partial_\mu g (\mu) (X) \qquad \text{$\mathbb{P}$-a.s.}, 
            \end{equation*}
        i.e.\ $D\hat{g}(X)(\omega) = \partial_\mu g (\mu) (X(\omega))$ for $\mathbb{P}$-almost every $\omega \in \Omega$ (cf.\ \cite[Proposition 5.25]{CARMONADELARUE_volumeI}). In particular, for any $Y \in L_2(\Omega;\R^d)$, we have that $D\hat{g}(X)[Y] = \E(\partial_\mu g(\mu)(X) \cdot Y)$, where $\cdot$ denotes the scalar product in $\R^d$  ;
        \item $\partial_\mu g(\mu)(v)$ has a jointly measurable $\mu$-version, namely it is possible to redefine $\partial_\mu g(\mu)$ on a $\mu$-negligible set in such a way that the mapping $\mathcal{P}_2(\R^d) \times \R^d \ni (\mu,v) \mapsto \partial_\mu g(\mu) (v) \in \R^d$ is jointly Borel measurable (cf.\ \cite[Proposition 5.33]{CARMONADELARUE_volumeI}). If not otherwise specified, in this paper we are always considering the jointly measurable version when dealing with L-derivatives ;
        \item for any $\mu,\nu \in \mathcal{P}_2(\R^d)$ \begin{equation*}
            g(\nu) - g(\mu) = \int_{\R^d \times \R^d} \partial_\mu g (\mu) (v) \cdot (w-v) \, \pi(dv,dw) + \vartheta_{\mu,\nu}(\pi) ,
        \end{equation*}
        where $\pi$ is any coupling of $\mu$ and $\nu$ and $$\vartheta_{\mu,\nu}(\pi) = o\left( \Big(\int_{\R^d \times \R^d} |w-v|^2 \, \pi(dv,dw) \Big)^\frac{1}{2}\right)$$ as $\int_{\R^d \times \R^d} |w-v|^2 \, \pi(dv,dw) \to 0$ (cf.\ \cite[Lemma 5.30]{CARMONADELARUE_volumeI}). In particular, given any random variables $X,Y$ on $(\Omega,\mathcal{F},\mathbb{P})$ with $\mathcal{L}_X=\mu$ and $\mathcal{L}_Y=\nu$, \begin{equation*}
            g(\nu) - g(\mu) = \E(\partial_\mu g(\mu) (X) \cdot (Y-X)) + \vartheta_{\mu,\nu}(\mathcal{L}_{(X,Y)}). 
        \end{equation*}  
    \end{enumerate} 

    \noindent We say that $g$ is Lipschitz continuous if there exists a positive constant denoted by $|g|_{Lip}$ such that \begin{equation*} |g(\mu) - g(\nu) | \le |g|_{Lip} \mathcal{W}_2(\mu,\nu) \qquad \text{for any $\mu,\nu \in \mathcal{P}_2(\R^d)$} . \end{equation*}

    \begin{lemma} \label{lemma:remainderinTaylorexpansion}
        Let $g: \mathcal{P}_2(\R^d) \to \R$ be continuously L-differentiable and assume its L-derivative has a $\mu$-version $\partial_\mu g: \mathcal{P}_2(\R^d) \times \R^d \to \R^d$ which is Lipschitz continuous in both variables \footnote{For definiteness, there exists a constant $|\partial_\mu g|_{Lip} >0$ such that \begin{equation*}
            |\partial_\mu g(\mu)(v) - \partial_\mu g(\nu)(v')| \le |\partial_\mu g|_{Lip} ( \mathcal{W}_2(\mu,\nu) + |v-v'| ). 
        \end{equation*} } . Then, for any $\mu,\nu \in \mathcal{P}_2(\R^d)$ and for any coupling $\pi$ of $\mu$ and $\nu$, \begin{equation*}
            | \vartheta_{\mu,\nu} (\pi) | \le 2 |\partial_\mu g|_{Lip} \mathcal{W}_2(\mu,\nu)^2 .
        \end{equation*}
    \end{lemma}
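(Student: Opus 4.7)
The plan is a Taylor-type argument using Lions' chain rule along an affine interpolation at the level of random variables. Fix an atomless Polish probability space $(\Omega,\mathcal{F},\mathbb{P})$ supporting random variables $X,Y$ with joint law $\pi$, so that $\mathcal{L}_X=\mu$ and $\mathcal{L}_Y=\nu$, and set $Z_t:=X+t(Y-X)$ for $t\in[0,1]$. Then $\mathcal{L}_{Z_0}=\mu$, $\mathcal{L}_{Z_1}=\nu$, and $\|Z_t-Z_s\|_2=|t-s|\,\|Y-X\|_2$, so $t\mapsto\mathcal{L}_{Z_t}$ is $\mathcal{W}_2$-continuous. Continuous L-differentiability of $g$ makes $t\mapsto\hat g(Z_t)$ continuously differentiable, and the chain rule together with property (i) of Lions derivatives yields
\[
\frac{d}{dt}\hat g(Z_t)=D\hat g(Z_t)[Y-X]=\mathbb{E}\bigl[\partial_\mu g(\mathcal{L}_{Z_t})(Z_t)\cdot(Y-X)\bigr].
\]
Integrating from $0$ to $1$ and subtracting the linear term in property (iii) gives the key representation
\[
\vartheta_{\mu,\nu}(\pi)=\int_0^1\mathbb{E}\Bigl[\bigl(\partial_\mu g(\mathcal{L}_{Z_t})(Z_t)-\partial_\mu g(\mu)(X)\bigr)\cdot(Y-X)\Bigr]\,dt.
\]

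The quadratic estimate then follows directly from the Lipschitz hypothesis. Bounding pointwise in $\omega$,
\[
\bigl|\partial_\mu g(\mathcal{L}_{Z_t})(Z_t)-\partial_\mu g(\mu)(X)\bigr|\le |\partial_\mu g|_{Lip}\bigl(\mathcal{W}_2(\mathcal{L}_{Z_t},\mu)+|Z_t-X|\bigr),
\]
where $|Z_t-X|=t|Y-X|$ and, using $(X,Z_t)$ as an admissible coupling of $\mu$ with $\mathcal{L}_{Z_t}$, $\mathcal{W}_2(\mathcal{L}_{Z_t},\mu)\le\|Z_t-X\|_2=t\|Y-X\|_2$. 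Applying Cauchy--Schwarz together with $\mathbb{E}|Y-X|\le\|Y-X\|_2$ and $\mathbb{E}|Y-X|^2=\|Y-X\|_2^2$ bounds the integrand by $2|\partial_\mu g|_{Lip}\,t\,\|Y-X\|_2^2$, and integrating in $t$ produces
\[
|\vartheta_{\mu,\nu}(\pi)|\le |\partial_\mu g|_{Lip}\|Y-X\|_2^2.
\]
Taking $(X,Y)$ to realise an optimal $\mathcal{W}_2$-coupling of $\mu$ and $\nu$ gives $\|Y-X\|_2^2=\mathcal{W}_2(\mu,\nu)^2$, and the announced inequality follows (with the factor $2$ in the statement providing slack).

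The only point that requires care is the justification of the chain rule along the affine family $t\mapsto Z_t$; this uses continuous Fréchet differentiability of $\hat g$ on the segment joining $X$ and $Y$ together with the $\mathcal{W}_2$-continuity of $t\mapsto\mathcal{L}_{Z_t}$, both immediate from the hypotheses. Once the integral representation of $\vartheta_{\mu,\nu}(\pi)$ is in hand, the rest is a routine combination of the Lipschitz bound and Cauchy--Schwarz, with the $t$-integral producing the $\tfrac12$ that controls the final constant.
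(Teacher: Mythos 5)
Your argument is essentially the paper's own: both run the mean value theorem for the Lions lift $\hat g$ along the affine segment $Z_\theta = X + \theta(Y-X)$ and then estimate the resulting integrand with the Lipschitz bound on $\partial_\mu g$, Cauchy--Schwarz, and $\mathcal{W}_2(\mathcal{L}_{Z_\theta},\mu)\le\theta\|Y-X\|_2$. You keep the factor $\theta$ under the $d\theta$ integral and obtain constant $1$; the paper bounds $\theta\le 1$ and gets the cruder $2$. This difference is cosmetic.

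One small point, shared verbatim with the paper's proof: with $(X,Y)$ a representative of the coupling $\pi$, what the argument actually yields is
\[
|\vartheta_{\mu,\nu}(\pi)|\;\le\;|\partial_\mu g|_{Lip}\int_{\R^d\times\R^d}|w-v|^2\,\pi(dv,dw),
\]
and replacing $(X,Y)$ at the last line by an optimal pair changes $\pi$ (and hence $\vartheta_{\mu,\nu}(\pi)$), so the $\mathcal{W}_2(\mu,\nu)^2$-form of the bound is recovered only for (near-)optimal couplings. As stated, ``for any coupling $\pi$'' is not literally correct: take $g(\mu)=\int|v|^2\,\mu(dv)$, $\mu=\nu=\tfrac12(\delta_{-1}+\delta_1)$ and the antipodal coupling; then $\vartheta_{\mu,\nu}(\pi)=4$ while $\mathcal{W}_2(\mu,\nu)=0$. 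This is an imprecision in the lemma's wording rather than in your reasoning, and it is harmless downstream, where the lemma is always invoked with a coupling (such as $\mathcal{L}_{(Z^\mu_t,Z^\mu_s)}$) for which $\int|w-v|^2\,\pi$ is of the desired order.
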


    \begin{proof}
        Let $\mu,\nu \in \mathcal{P}_2(\R^d)$ be fixed and let $X,Y$ be two arbitrary $\R^d$-valued random variables with $\mathcal{L}_X=\mu$ and $\mathcal{L}_Y = \nu$. 
        Being $\hat{g}:L_2(\Omega;\R^d) \to \R$ continuously Fréchet differentiable, by means of the mean value theorem we can write \begin{align*}
            g(\nu) - g(\mu) &= \hat{g}(Y) - \hat{g}(X) = \int_0^1 D\hat{g}(X + \theta(Y-X)) [Y-X] \, d\theta = \\ 
            &= \int_0^1 \E(\partial_\mu g(\mu)(\mathcal{L}_{X + \theta(Y-X)}) (X + \theta(Y-X)) \cdot (Y-X) ) \, d\theta .
        \end{align*}
        Hence, \begin{equation*}
            \begin{split}
                |\vartheta_{\mu,\nu} ( & \mathcal{L}_{(X,Y)})| = |\hat{g}(Y) - \hat{g}(X) - \E(\partial_\mu g(X) \cdot (Y-X))| = \\
                &\le \int_0^1 \E(|(\partial_\mu g(\mathcal{L}_{X + \theta(Y-X)}) (X + \theta(Y-X)) - \partial_\mu g(\mu) (X) )\cdot (Y-X)|) \, d\theta \\
                &\le |\partial_\mu g|_{Lip} \int_0^1  \mathcal{W}_2(\mathcal{L}_{X + \theta (Y-X)},\mu) + \|\theta (Y-X)\|_2 \, d\theta \  \|Y-X\|_2 \le 2 |\partial_\mu g|_{Lip} \|X-Y\|_2^2 ,
            \end{split}
        \end{equation*}
        where we used the fact that $\mathcal{W}_2(\mathcal{L}_{X + \theta (Y-X)},\mu) \le \| (X + \theta (Y-X)) - X\|_2 $.
        The conclusion follows from the arbitrariety of $X$ and $Y$.
    \end{proof}

    \noindent Let $e \in \mathbb{N}_{\ge 1}$ and let $g=(g^i)_{i=1,\dots,e}:\mathcal{P}_2(\R^d) \to \R^e$. Assuming every $g^i$ is continuously L-differentiable, we define the map $\partial_\mu g:\mathcal{P}_2(\R^d) \times \R^d \to \R^e \otimes \R^d$ by \begin{equation*}
        (\partial_\mu g(\mu)(v))^{ij} := [\partial_\mu g^i (\mu)]^j (v) \qquad i=1,\dots,e, \, j=1,\dots,d .  
    \end{equation*}



\subsection{Differentiability along sub-Banach spaces}
We report here the content of \cite[Section 2.2]{FHL25}.
Let $E$ be a topological vector space and let $(\mathcal{X}, |\cdot|_\mathcal{X}),(\mathcal{Y}, |\cdot|_\mathcal{Y})$ be two Banach spaces such that $\mathcal{X}$ is continuously embedded into $E$. 
Later in this paper, we are going to apply the results of this section in the case $E = \R^d \times L_2(\Omega;\R^d)$ and $\mathcal{X}=\R^d \times L_q(\Omega;\R^d)$, for $q \in [2,\infty)$, with $|(x,X)|_\mathcal{X} := |x|_{\R^d} + \|X\|_q$. Let $g : E \to \mathcal{Y}$. We define \begin{equation*}
    |g|_\infty := \sup_{x \in E} |g(x)|_{\mathcal{Y}} .
\end{equation*}

\begin{defn} \label{def:HoldercontinuityalongsubBanachspaces}
    Let $\gamma \in (0,1]$. We say that $g$ is $\gamma$-H\"older continuous along $\mathcal{X}$ if there exists a constant $[g]_{\gamma;\mathcal{X}} >0$ such that \begin{equation*}
        |g(x) - g(y)|_\mathcal{Y} \le [g]_{\gamma;\mathcal{X}} |x-y|^\gamma_\mathcal{X} \qquad \text{for any $x,y \in E$ such that $x-y \in \mathcal{X}$} .
    \end{equation*}
    If $\gamma=1$, we say that $g$ is Lipschitz continuous along $\mathcal{X}$ and we write $|g|_{Lip;\mathcal{X}}$ instead of $[g]_{1;\mathcal{X}}$.
    In the case $E = \mathcal{X}$, we simply get the classical notion of H\"older/Lipschitz continuity, and we simply write $[g]_{\gamma}$ instead of $[g]_{\gamma;E}$. 
\end{defn}

\begin{defn}
    We say that $g$ is Fréchet differentiable along $\mathcal{X}$ if, for any $x \in E$, the function $\mathcal{X} \ni \xi \mapsto g_x(\xi) := g(x+\xi) \in \mathcal{Y}$ is Fréchet differentiable. We denote by \begin{equation*}
        Dg(x) := D g_x (\xi) \big|_{\xi = 0} \in \mathscr{L}(\mathcal{X},\mathcal{Y})  \quad \text{for any $x \in E$} .
    \end{equation*}
    We say that $g$ is continuously Fréchet differentiable along $\mathcal{X}$ if, in addition, the map $E \ni x \mapsto Dg (x) \in \mathscr{L}(\mathcal{X},\mathcal{Y})$ is continuous. In this case we write $g \in C^1_{\mathcal{X}}(E;\mathcal{Y})$. 
    When $E = \mathcal{X}$, we simply write $C^1(E;\mathcal{Y})$ instead of $C^1_E(E;\mathcal{Y})$, and we get the classical notion of Fréchet differentiability.  
\end{defn}

\begin{defn}
    Let $k \in (1,+\infty)$, and (uniquely) decompose it as $k = N + \gamma$ for some $N \in \mathbb{N}_{\ge 0}$ and $\gamma \in (0,1]$. We say that $g$ is $k$-times continuously Fréchet differentiable along $\mathcal{X}$ - and we write $g \in C^k_\mathcal{X}(E;\mathcal{Y})$ - if, for any $x \in E$, the function $\mathcal{X} \ni \xi \mapsto g_x(\xi) := g(x+\xi) \in \mathcal{Y}$ is $N$-times Fréchet differentiable and, denoting by $D^N g$ its $N$-th Fréchet derivative, the map $E \ni x \mapsto D^N g (x) := D^N g_x (\xi) |_{\xi = 0} \in \mathscr{L}^N(\mathcal{X},\mathcal{Y})$ is $\gamma$-H\"older continuous along $\mathcal{X}$.  
    If, in addition, \begin{equation*}
        |g|_{C^k_b;\mathcal{X}} := |g|_\infty + \sum_{j=1}^N |D^j g|_\infty + [D^N g]_{\gamma;\mathcal{X}} < +\infty
    \end{equation*}
    we write $g \in C^k_{b,\mathcal{X}}(E;\mathcal{Y})$. When $E=\mathcal{X}$, we simply write $C^k_b(E;\mathcal{Y})$ instead of $C^k_{b,E}(E;\mathcal{Y})$. 
\end{defn}

\noindent As a trivial example, notice that when we write $g \in C^2(\R^d;\R)$ we mean that $g$ is continuously Fréchet differentiable and its first Fréchet derivative $Dg$ is Lipschitz continuous from $\R^d$ to $\mathscr{L}^2(\R^d,\R)$. 

\begin{rmk} \label{rmk:counterexampleCD}
    Differentiability along sub-Banach spaces naturally arises as a tool to deal with continuity of higher order Fréchet derivatives. 
    In \cite[Remark 5.80]{CARMONADELARUE_volumeI} the authors exhibit the explicit construction of a smooth and compactly supported function $h:\R^d \to \R$ for which the map $g : L_2(\Omega;\R^d) \to \R$ defined by $g(X) := \E(h(X))$ is not twice continuously Fréchet differentiable. 
    The issue clearly relies in the continuity of the second Fréchet derivative as a function from $L_2(\Omega;\R^d)$ to $\mathscr{L}^2(L_2(\Omega;\R^d),\R)$.
    An easy application of H\"older's inequality shows that $D^2g$ is $((p-2) \wedge 1)$-H\"older continuous along $L_p(\Omega;\R^d)$ whenever $p \in (2,\infty)$, from which it follows that $g \in C^{p\wedge 3}_{b,L_p(\Omega;\R^d)}(L_2(\Omega;\R^d);\R)$ for any $p \in (2,\infty)$. 
    Indeed, assuming $d=1$ for simplicity, for any $Y_1,Y_2 \in L_p(\Omega;\R)$ and for any $X,\bar X \in L_2(\Omega;\R)$ such that $X - \bar X \in L_p(\Omega;\R)$, we have \begin{align*}
        |D^2g(X) [Y_1,Y_2] - D^2g(\bar X) [Y_1,Y_2] | &= |\E( (h''(X) - h''(\bar X)) Y_1 Y_2)| \\
        &\le \|h''(X) -h''(\bar X)\|_r \|Y_1\|_p \|Y_2\|_p
    \end{align*}
    where $\frac{1}{r} + \frac{2}{p} = 1$.
    In the case $r \ge p$ (or, equivalently, $p \in (2,3]$), using boundedness and Lipschitz continuity of $h''$, we get  \begin{align*}
        \|h''(X) - h''(\bar X)\|_r &= \E(|h''(X) - h''(\bar X)|^p |h''(X) - h''(\bar X)|^{r-p})^\frac{1}{r} \\
        &\le 2 |h''|_\infty |h''|_{Lip} \|X- \bar X\|_p ^{\frac{p}{r}} ,
    \end{align*}
    where $\frac{p}{r} = p-2$. If instead $r > p$ (or, equivalently, $p > 3$) we deduce by H\"older's inequality that \begin{equation*}
        \|h''(X) - h''(\bar X)\|_r \le \|h''(X) - h''(\bar X)\|_p \le |h''|_{Lip} \|X- \bar X\|_p .
    \end{equation*}
\end{rmk}

\section{McKean--Vlasov rough SDEs} \label{section:McKean--ValsovroughSDEs}
Let $T>0$ and let $d,m,n \in \mathbb{N}_{\ge 1}$. Let $\mathbf{\Omega}=(\Omega,\mathcal{F},\{\mathcal{F}_t\}_{t \in [0,T]},\mathbb{P})$ be a complete Polish probability space, equipped with a filtration $\{\mathcal{F}_t\}_{t \in [0,T]}$ such that $\mathcal{F}_0$ contains all the $\mathbb{P}$-null sets. 

\begin{defn}
    (see \cite[Definition 3.1]{FHL21})
    Let $\alpha \in (0,1]$ and let $W:[0,T] \to \R^n$ be $\alpha$-H\"older continuous. Let $p \in [2,\infty)$ and $q \in [p,\infty]$ and let $V$ be a finite dimensional Banach space. A pair $(X,X')$ is said to be a \textit{stochastic controlled rough path} of $(p,q)$-integrability with values in $V$ if 
\begin{enumerate} \renewcommand{\labelenumi}{\roman{enumi})}
    \item $X:\Omega \times [0,T] \to V$ is progressively measurable and 
    \begin{equation*}
        \|\delta X\|_{\alpha;p,q} := \sup_{0\le s < t\le T} \frac{\|\E(|\delta X_{s,t}|^p|\mathcal{F}_s)^\frac{1}{p}\|_q}{|t-s|^\alpha} < +\infty;
    \end{equation*} 
    \item $X':\Omega \times [0,T] \to \mathscr{L}(\R^n,V)$ is progressively measurable such that $\sup_{t \in [0,T]} \|X'_t\|_q < +\infty$ and
    \begin{equation*}
        \|\delta X'\|_{\alpha;p,q} := \sup_{0\le s < t\le T} \frac{\|\E(|\delta X'_{s,t}|^p|\mathcal{F}_s)^\frac{1}{p}\|_q}{|t-s|^{\alpha}} < +\infty;
    \end{equation*}
    \item denoting by $R^X_{s,t}=\delta X_{s,t} - X'_s \delta W_{s,t} $ for $(s,t)\in\Delta_{[0,T]}$, it holds that 
    \begin{equation*}
        \|\E_\cdot R^X\|_{2\alpha;q}:=\sup_{0\le s < t \le T} \frac{\|\E(R^X_{s,t} \mid \mathcal{F}_s)\|_q}{|t-s|^{2\alpha}} < + \infty.
    \end{equation*} 
\end{enumerate}
We write $(X,X') \in \mathbf{D}_W^{2\alpha} L_{p,q}([0,T],\mathbf{\Omega};V)$ or simply $(X,X') \in \mathbf{D}_W^{2\alpha} L_{p,q}(V)$, if the time interval and the stochastic basis are clear from the context.
In the case $p=q$, we simply write $\mathbf{D}_W^{2\alpha} L_{p}(V)$ and $\|\cdot\|_{\alpha;p}$ instead of $\mathbf{D}_W^{2\alpha} L_{p,p}(V)$ and $\|\cdot\|_{\alpha;p,p}$, respectively.
\end{defn}

\begin{rmk}
    (The rough stochastic integral, see \cite[Section 3.2]{FHL21})
    Let $\alpha \in (\frac{1}{3},\frac{1}{2}]$, let $\mathbf{W}=(W,\mathbb{W}) \in \mathscr{C}^\alpha([0,T];\R^n)$ be a rough path and let $(X,X') \in \mathbf{D}_W^{2\alpha} L_{p,q}([0,T],\mathbf{\Omega};\mathscr{L}(\R^n,V))$ stochastically controlled by $W$.
    Then there exists a unique continuous and adapted stochastic process $$\left(\int_0^t (X_r,X'_r) \, d\mathbf{W}_r \right)_{t \in [0,T]}$$ on $\mathbf{\Omega}$ taking values in $V$ such that, for any $s,t \in \Delta_{[0,T]}$ and for some $\varepsilon >0$, \begin{equation*}
        \Big\| \E\Big(\Big|\int_s^t (X_r,X'_r) \, d\mathbf{W}_r - X_s \delta W_{s,t} - X'_s \mathbb{W}_{s,t} \Big|^p \mid \mathcal{F}_s \Big)^\frac{1}{p} \Big\|_q \lesssim |t-s|^{\frac{1}{2} + \varepsilon}
    \end{equation*} and \begin{equation*}
        \| \E \left( \int_s^t (X_r,X'_r) \, d\mathbf{W}_r - X_s \delta W_{s,t} \mid \mathcal{F}_s \right) \|_q \lesssim |t-s|^{1+\varepsilon} .
    \end{equation*}
    Here $\int_s^t (X_r,X'_r) \, d\mathbf{W}_r = \int_0^t (X_r,X'_r) \, d\mathbf{W}_r - \int_0^s (X_r,X'_r) \, d\mathbf{W}_r$. If clear from the context we simply denote $\int_0^t (X_r,X'_r) \, d\mathbf{W}_r$ by $\int_0^t X_r \, d\mathbf{W}_r$. 
    Moreover, for any $t \in [0,T]$, \begin{equation*}
        \int_0^t (X_r,X'_r) \, d\mathbf{W}_r = \lim_{|\pi| \to 0} \sum_{[u,v] \in \pi} X_u \delta W_{u,v} + X'_u \mathbb{W}_{u,v} \qquad \text{in $L^p(\Omega;V)$},
    \end{equation*}
    where the limit is taken along any sequence of partitions of $[0,T]$ whose sizes tend to zero, and \begin{equation*}
        \left( \int_0^\cdot (X_r,X'_r) \, d\mathbf{W}_r , X \right) \in \mathbf{D}_W^{2\alpha} L_{p,q}([0,T],\mathbf{\Omega};V)
    \end{equation*}
    provided that $\sup_{t \in [0,T]} \|X_t\|_p < +\infty$. 
\end{rmk}

\begin{defn} \label{def:stochasticcontrolledvectorfield}
    (generalisation of \cite[Definition 3.4]{FHL25} \footnote{Note that $(f,f') \in \mathscr{D}_W^{2\alpha}C^\kappa_{b,\mathcal{X}}$ in the sense of \cref{def:stochasticcontrolledvectorfield} for some $\kappa \in (\frac{1}{\alpha},3]$ if and only if $(f,f') \in \mathscr{D}_W^{2\alpha}C^\kappa_{b,\mathcal{X}}$ and $(D_xf, D_x f') \in \mathscr{D}_W^{2\alpha}C^{\kappa-1}_{b,\mathcal{X}}$ in the sense of \cite[Definition 3.4]{FHL25}. When $E = \mathcal{X}$, \cref{def:stochasticcontrolledvectorfield} aligns with \cite[Definition 2.3]{BFS24}.})
    Let $E$ be a topological vector space and let $\mathcal{X}, \mathcal{Y}$ be two Banach spaces such that $E$ is continuously embedded into $\mathcal{X}$. 
    Let $\alpha \in (0,1]$ and let $W:[0,T] \to \R^n$ be $\alpha$-H\"older continuous. 
    Let $\kappa \in (1,\infty)$ such that $\kappa = N + \gamma$ for some $N \in \mathbb{N}_{\ge 1}$ and $\gamma \in (0,1]$. 
    A \textit{(deterministic) controlled vector field} in $\mathscr{D}_W^{2\alpha}C_{b,\mathcal{X}}^\kappa(E;\mathcal{Y})$
    - or simply in $\mathscr{D}_W^{2\alpha}C_{b,\mathcal{X}}^\kappa$
    - is a pairing $(f,f')$ satisfying the following: \begin{enumerate}  \renewcommand{\labelenumi}{\roman{enumi})}
        \item $f: [0,T] \to C^\kappa_{b,\mathcal{X}}(E;\mathcal{Y})$ and $f': [0,T] \to C^{\kappa-1}_{b,\mathcal{X}}(E;\mathscr{L}(\R^n,\mathcal{Y}))$ are progressively measurable with
        \begin{equation*}
            \sup_{t \in [0,T]}  |f_t(\cdot)|_{C_b^\kappa;\mathcal{X}}  + \sup_{t \in [0,T]}  |f'_t(\cdot)|_{C_b^{\kappa-1};\mathcal{X}}  < +\infty;
        \end{equation*}
        \item denoting by $\delta f_{s,t}(x) := f_t(x) - f_s(x)$,  \begin{equation*}
            \sup_{0 \le s < t \le T} \frac{ \sup_{x \in E}|\delta f_{s,t} (x)|}{|t-s|^\alpha} < +\infty
        \end{equation*}
        and the same holds by replacing $\delta f_{s,t}$ by $\delta (D_x f)_{s,t}, \dots , \delta (D^N_x f)_{s,t}$ and $ \delta f'_{s,t}, \dots, \delta (D_x^{N-1} f')_{s,t}$;
        \item denoting by $R^f_{s,t}(x) := f_t(x) - f_s(x) - f_s'(x) \delta W_{s,t}$,
        \begin{equation*}
            \sup_{0 \le s < t \le T} \frac{ \sup_{x \in E}|R^f_{s,t} (x) |}{|t-s|^{2\alpha}} < +\infty
        \end{equation*}
        and the same holds replacing $R^f_{s,t}$ by $R^{Df}_{s,t}, \dots, R^{D^{N-1}_xf}_{s,t}$.
    \end{enumerate}
\end{defn}

\noindent Consider the following measurable mappings: \begin{align*}
    b &:  [0,T] \times \R^d \times \mathcal{P}_2(\R^d) \to \R^d \\
    \sigma =(\sigma_l)_{l=1,\dots,m} &:   [0,T] \times \R^d \times \mathcal{P}_2(\R^d) \to \mathscr{L}(\R^m,\R^d) \equiv \R^{d \times m} \\
    f = (f_\kappa)_{\kappa=1,\dots,n} &:   [0,T] \times \R^d \times \mathcal{P}_2(\R^d) \to \mathscr{L}(\R^n,\R^d) 
\end{align*} and \begin{equation*}
    f' = (f'_{\kappa \lambda})_{\kappa,\lambda = 1,\dots,n} :   [0,T] \times \R^d \times \mathcal{P}_2(\R^d) \to \mathscr{L}(\R^n \otimes \R^n,\R^d) .
\end{equation*}
Let $(\Omega',\mathcal{F}',\mathbb{P}')$ be a Polish atomless probability space.   
\begin{fact}
    (see \cite[Section A.1]{FHL25})
    There is a measurable and measure preserving map $\pi : \Omega' \to \Omega$ such that, for any separable Banach space $E$ and for any $E$-valued random variable $X$ on $(\Omega,\mathcal{F},\mathbb{P})$, the map \begin{equation*}
        \cancel{X}(\omega') := X(\pi(\omega')) \qquad \omega' \in \Omega'
    \end{equation*}
    defines a $\cancel{\mathcal{F}}:= \pi^{-1}(\mathcal{F})$-measurable random variable whose law is identical to the law of $X$. Moreover,for any $p \in [1,\infty)$ the map $L_p(\Omega,\mathcal{F};E) \ni X \mapsto \cancel{X} \in L_p(\Omega',\cancel{\mathcal{F}};E)$ is an isometry
\end{fact}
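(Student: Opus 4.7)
The plan is to invoke the standard isomorphism theorem for Polish (standard Borel) probability spaces, together with the fact that any Borel probability measure on a Polish space is a pushforward of Lebesgue measure on $[0,1]$ under some Borel map. The key input is that $(\Omega',\mathcal{F}',\mathbb{P}')$ is \emph{atomless} and Polish, so that modulo $\mathbb{P}'$-null sets it is isomorphic (as a measure space) to $([0,1],\mathcal{B}([0,1]),\lambda)$ with $\lambda$ the Lebesgue measure. Thus it suffices to construct a measurable measure-preserving $\pi_0:[0,1]\to\Omega$ and then precompose with this isomorphism.

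\textbf{Construction of $\pi$.} For the construction of $\pi_0$ I would argue as follows. By a theorem of Kuratowski, any uncountable Polish space $\Omega$ is Borel isomorphic to $[0,1]$, and in the countable case the construction is elementary (partition $[0,1]$ into intervals whose lengths are the masses of the atoms). In the uncountable case, transport $\mathbb{P}$ to a Borel probability measure $\tilde\mathbb{P}$ on $[0,1]$; one then builds a measurable map $[0,1]\to[0,1]$ sending $\lambda$ to $\tilde\mathbb{P}$ using the pseudo-inverse of the cumulative distribution function of $\tilde\mathbb{P}$. Transporting back via the Borel isomorphism produces $\pi_0:[0,1]\to\Omega$ with $(\pi_0)_*\lambda=\mathbb{P}$. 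Composing with the isomorphism $\Omega'\cong[0,1]$ yields the desired measurable, measure-preserving $\pi:\Omega'\to\Omega$.

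\textbf{Verification of the properties.} Once $\pi$ is in hand, the rest is routine. Since $\pi$ is measurable, $\cancel{\mathcal{F}}=\pi^{-1}(\mathcal{F})$ is a sub-$\sigma$-algebra of $\mathcal{F}'$, and for any measurable $X:(\Omega,\mathcal{F})\to(E,\mathcal{B}(E))$ the composition $\cancel{X}=X\circ\pi$ is $\cancel{\mathcal{F}}/\mathcal{B}(E)$-measurable. For any $A\in\mathcal{B}(E)$ the measure-preservation of $\pi$ gives
\begin{equation*}
    \mathbb{P}'(\cancel{X}\in A)=\mathbb{P}'(\pi^{-1}(X^{-1}(A)))=\mathbb{P}(X^{-1}(A)),
\end{equation*}
so $\mathcal{L}_{\cancel{X}}=\mathcal{L}_{X}$. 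The isometry property is then just the change-of-variables formula: for any $p\in[1,\infty)$,
\begin{equation*}
    \|\cancel{X}\|_{L_p(\Omega';E)}^p=\int_{\Omega'}|X(\pi(\omega'))|^p_E\,\mathbb{P}'(d\omega')=\int_{\Omega}|X(\omega)|^p_E\,\mathbb{P}(d\omega)=\|X\|_{L_p(\Omega;E)}^p,
\end{equation*}
which extends to the full spaces by the usual density/simple function approximation argument (valid for $E$ separable).

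\textbf{Main obstacle.} Essentially all the work is packed into the existence of the measure-preserving map $\pi$; the remaining verifications are formal. The only delicate point is accommodating the fact that $(\Omega,\mathcal{F},\mathbb{P})$ is \emph{not} assumed atomless, which rules out a bimeasurable isomorphism. One must instead use a one-sided measure-preserving surjection, which is where the atomlessness of $\Omega'$ is crucial: atomless sources can hit any Borel probability measure on a Polish target, but not conversely.
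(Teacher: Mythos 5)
The paper does not prove this Fact; it is quoted directly from \cite[Section~A.1]{FHL25} with no internal argument, so there is nothing to compare your proof against within the text. That said, your construction is correct and is the standard route to this result: the atomless Polish space $(\Omega',\mathcal{F}',\mathbb{P}')$ is isomorphic mod $0$ to $([0,1],\mathcal{B},\lambda)$ by Rokhlin/von Neumann, and a measure-preserving Borel map $[0,1]\to\Omega$ with $\pi_{0*}\lambda=\mathbb{P}$ is produced by combining the Kuratowski Borel-isomorphism theorem (uncountable case), or an explicit interval partition (countable case), with the quantile transform. Composing and extending arbitrarily on the exceptional null set gives $\pi$. The verification steps are all routine and you have them right: $\cancel{\mathcal{F}}$-measurability of $X\circ\pi$ by composition, equality of laws from $\pi_*\mathbb{P}'=\mathbb{P}$, and the isometry by the change-of-variables formula. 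Two minor remarks. First, the closing appeal to ``density/simple function approximation'' is superfluous: once $X$ is strongly measurable (which, for separable $E$, coincides with Borel measurability by Pettis), the scalar function $|X|_E^p$ is a genuine nonnegative measurable function and the pushforward identity $\int_{\Omega'}(|X|_E^p\circ\pi)\,d\mathbb{P}'=\int_\Omega|X|_E^p\,d\mathbb{P}$ holds directly; approximation is only needed if one insists on tracking Bochner integrals rather than their norms. Second, you do not actually need a ``surjection'' in the final remark: the map $\pi$ need not be onto, only measure-preserving, and in fact you also did not need the full two-sided isomorphism of $\Omega'$ with $[0,1]$ mod $0$ --- a single uniform random variable $U:\Omega'\to[0,1]$ with $U_*\mathbb{P}'=\lambda$ (whose existence uses only atomlessness and a binary-expansion argument) would suffice and is the lighter tool. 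Neither point affects correctness.
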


\noindent Let $\{\mathcal{F}'_t\}_{t \in [0,T]}$ be a filtration on $(\Omega',\mathcal{F}',\mathbb{P}')$ such that $\mathcal{F}' \subseteq \cancel{\mathcal{F}}$ and $\cancel{\mathcal{F}}_t \subseteq \mathcal{F}'_t$ for any $t \in [0,T]$. 
Let $\hat{b}: [0,T] \times \R^d \times L_2(\Omega';\R^d) \to \R^d$ be the Lions lift of $b$ with respect to $(\Omega',\mathcal{F}',\mathbb{P}')$. Namely, $\hat{b}_t(x,X) := b(t, x,\mathcal{L}_X)$. In a similar way, we define $\hat{\sigma}, \hat{f}$ and $\hat{f'}$.

Let $B=(B_t)_{t \in [0,T]}$ be an $m$-dimensional $\{\mathcal{F}_t\}_t$-Brownian motion on $\mathbf{\Omega}$, and let $\mathbf{W}=(W,\mathbb{W}) \in \mathscr{C}^\alpha([0,T];\R^n)$ be a deterministic rough path with $\alpha \in (\frac{1}{3},\frac{1}{2}]$. 

\begin{defn} \label{def:solution_McKeanVlasovroughSDE}
    (see \cite[Definition 3.7]{FHL25})
    Let $p \in [2,\infty)$. An $L_{p,\infty}$-integrable solution to \begin{equation} \label{eq:solution_McKeanVlasovroughSDE}
    \begin{cases}
        dX_t &= b(t, X_t,\mu_t) \, dt + \sigma(t, X_t,\mu_t) \, dB_t + (f,f')(t, X_t,\mu_t) \, d\mathbf{W}_t \\
        \mu_t &= \mathcal{L}_{X_t}
    \end{cases}
    \end{equation}
    is a continuous and adapted stochastic process on $\mathbf{\Omega}$ such that the following conditions are satisfied: \begin{enumerate} \renewcommand{\labelenumi}{(\roman{enumi})}
        \item for any $t \in [0,T]$, $\int_0^t |\hat{b}_r(X_r,\cancel{X}_r)| dr$ and $\int_0^t |(\hat{\sigma} \hat{\sigma}^T)_r(X_r,\cancel{X}_r)| dr$ are finite $\mathbb{P}$-a.s. ;
        \item $(\hat{f}(X,\cancel{X}), D\hat{f}(X,\cancel{X})[\hat{f}(X,\cancel{X}),\cancel{\hat{f}(X,\cancel{X})}]  + \hat{f}'(X,\cancel{X}))$ is a stochastic controlled rough path in $\mathbf{D}_W^{2\alpha}L_{p,\infty}([0,T],\mathbf{\Omega};\mathscr{L}(\R^n,\R^d))$ ;
        \item for any $0 \le s \le t \le T$, \begin{multline} \label{eq:Davieexpansion_solutionMcKeanVlasovroughSDE}
            \delta X_{s,t} = \int_s^t \hat{b}_r(X_r,\cancel{X}_r) \, dr + \sum_{l=1}^m \int_s^t (\hat{\sigma}_l)_r (X_r,\cancel{X}_r) \, dB^l_r + \sum_{\kappa=1}^n (\hat{f}_\kappa)_s(X_s,\cancel{X}_s) \,  \delta W^\kappa_{s,t} + \\ + \sum_{\kappa,\lambda=1}^n \big(D(\hat{f}_\lambda)_s(X_s,\cancel{X}_s) [(\hat{f}_\kappa)_s(X_s,\cancel{X}_s), \cancel{(\hat{f}_\kappa)_s(X_s,\cancel{X}_s)}] + (\hat{f}'_{\lambda \kappa})_s(X_s,\cancel{X}_s) \big) \, \mathbb{W}^{\kappa \lambda}_{s,t} + X^\natural_{s,t} ,
            \end{multline}
            where $\| \E( |X^\natural_{s,t}|^p  \mid \mathcal{F}_s)^\frac{1}{p} \|_\infty \lesssim |t-s|^{2\alpha}$ and $\|\E(X^\natural_{s,t} \mid \mathcal{F}_s)\|_\infty\lesssim |t-s|^{3\alpha}$ .
    \end{enumerate}
    If the $\mathcal{F}_0$-measurable initial condition $X_0=\xi$ is specified, we say that $X$ is a solution to \eqref{eq:solution_McKeanVlasovroughSDE} starting from $\xi$.
\end{defn}

\begin{rmk} \label{rmk:FréchetderivativesinDavieexpansion}
    Assume that, for any $(t,\mu) \in [0,T] \times \mathcal{P}_2(\R^d)$, the function $\R^d \ni x \mapsto f(t, x,\mu)$ is differentiable in classical sense and that, for any $(t,x) \in [0,T] \times \R^d$, the function $\mathcal{P}_2(\R^d) \ni \mu \mapsto f(t, x,\mu)$ is continuously L-differentiable. Then, for any $t \in [0,T]$ and for any $\kappa,\lambda =1,\dots,n$, we can rewrite  \begin{align*} 
        &D(\hat{f}_\lambda)_t(X_t,\cancel{X}_t) [(\hat{f}_\kappa)_t(X_t,\cancel{X}_t), \cancel{(\hat{f}_\kappa)_t(X_t,\cancel{X}_t)}]  = \\
        &= D_1(\hat{f}_\lambda)_t(X_t,\cancel{X}_t) [(\hat{f}_\kappa)_t(X_t,\cancel{X}_t)]  +D_2(\hat{f}_\lambda)_t(X_t,\cancel{X}_t) [\cancel{(\hat{f}_\kappa)_t(X_t,\cancel{X}_t)}]  = \\ 
        &= \sum_{j=1}^d \partial_{x^j} f_\lambda(t, X_t, \mathcal{L}_{X_t}) f_\kappa^j(t, X_t, \mathcal{L}_{X_t}) + \int_{\R^d} \partial_\mu f_\lambda (t, X_t,\mathcal{L}_{X_t}) (z) \cdot f_\kappa(t, z,\mathcal{L}_{X_t}) \, \mathcal{L}_{X_t}(dz) ,
    \end{align*}
    where $\cdot$ denotes the scalar product in $\R^d$ and we used the fact that $\mathcal{L}_{\cancel{X}_t} = \mathcal{L}_{X_t}$. 
    Notice that, $\cancel{(\hat{f}_\kappa)_t(X_t,\cancel{X}_t)}(\omega') = (\hat{f}_\kappa)_t(X_t(\omega'),\cancel{X}_t)$. 
    Hence, by using the definitions of $\hat{b}$, $\hat{\sigma}$ and the previous computation, 
    \eqref{eq:Davieexpansion_solutionMcKeanVlasovroughSDE} can be written as \begin{multline*} 
            \delta X_{s,t} = \int_s^t b(s, X_r,\mathcal{L}_{X_r}) \, dr + \sum_{l=1}^m \int_s^t \sigma_l (r, X_r,\mathcal{L}_{X_r}) \, dB_r + \sum_{\kappa=1}^n f_\kappa(s, X_s,\mathcal{L}_{X_s}) \,  \delta W^\kappa_{s,t} + \\ + \sum_{\kappa,\lambda=1}^n \Big(\sum_{j=1}^d \partial_{x^j} f_\lambda(s, X_s, \mathcal{L}_{X_s}) f_\kappa^j(s, X_s, \mathcal{L}_{X_s}) + \int_{\R^d} \partial_\mu f_\lambda (s, X_s,\mathcal{L}_{X_s}) (z) \cdot f_\kappa(s, z,\mathcal{L}_{X_s}) \, \mathcal{L}_{X_s}(dz) + \\ + f'_{\lambda \kappa}(s, X_s,\mathcal{L}_{X_s}) \Big) \, \mathbb{W}^{\kappa \lambda}_{s,t} + X^\natural_{s,t} .
            \end{multline*}
\end{rmk}

\begin{thm} \label{thm:wellposedness_McKeanVlasovroughSDEs}
    (see \cite[Theorem 3.10]{FHL25})
    Let $\gamma \in (\frac{1}{\alpha},3]$, let $p \in [2,\infty)$. 
    Assume that, for any $t \in [0,T]$,  $$\hat b_t \in C^1_{b, \R^d \times L_p(\Omega';\R^d)} (\R^d \times L_2(\Omega';\R^d) ; \R^d)$$ with $\sup_{t \in [0,T]} (|\hat b(t,\cdot , \cdot)|_{\infty} + [\hat b(t,\cdot , \cdot)]_{1;\R^d \times L_p(\Omega;\R^d)}) < +\infty$.  
    Assume similar properties on $\hat \sigma_t$ and suppose that
    \begin{equation*}
        (\hat{f},\hat{f}') \in \mathscr{D}_W^{2\alpha} C_{b, \R^d \times L_p(\Omega';\R^d)}^\gamma(\R^d \times L_2(\Omega';\R^d); \mathscr{L}(\R^n,\R^d)) .
    \end{equation*}  
    Then, for any $\xi \in L_2(\Omega, \mathcal{F}_0;\R^d)$, there is a unique $L_{p,\infty}$-integrable solution $X$ to \eqref{eq:solution_McKeanVlasovroughSDE} starting from $\xi$. In particular, $(X,\hat{f}(X,\cancel{X})) \in \mathbf{D}_W^{2\alpha}L_{p,\infty}([0,T], \mathbf{\Omega}; \R^d)$ .
    
\end{thm}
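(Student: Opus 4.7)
The plan is to set up a Picard/Banach fixed-point iteration in a space of processes, leveraging the (non-McKean--Vlasov) rough stochastic SDE theory from \cite{FHL21} and (for the time-dependent frozen problem) \cite{BFS24}. The key idea is: if we freeze the measure flow, \eqref{eq:solution_McKeanVlasovroughSDE} becomes a standard rough SDE whose well-posedness we can invoke; the McKean--Vlasov closure is then imposed by a contraction argument.

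\textbf{Step 1 — Setup and frozen problem.} Fix a small horizon $\tau\in(0,T]$ (to be chosen). Work in the space $\mathcal{S}_\tau$ of continuous, $\{\mathcal{F}_t\}$-adapted processes $Y:\Omega\times[0,\tau]\to \R^d$ with $Y_0=\xi$ and $\sup_{t\le\tau}\|Y_t\|_p<\infty$, $\|\delta Y\|_{\alpha;p,\infty}<\infty$, equipped with the natural norm. Given $Y\in \mathcal{S}_\tau$, set
\begin{equation*}
  \tilde b_t(x):=b(t,x,\mathcal{L}_{Y_t}),\quad \tilde\sigma_t(x):=\sigma(t,x,\mathcal{L}_{Y_t}),\quad \tilde f_t(x):=f(t,x,\mathcal{L}_{Y_t}),
\end{equation*}
and define the companion Gubinelli derivative $\tilde f'_t(x)$ by adding to $f'(t,x,\mathcal{L}_{Y_t})$ the Lions-derivative correction $\int_{\R^d}\partial_\mu f(t,x,\mathcal{L}_{Y_t})(z)\cdot f(t,z,\mathcal{L}_{Y_t})\,\mathcal{L}_{Y_t}(dz)$ suggested by \cref{rmk:FréchetderivativesinDavieexpansion}. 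Under the hypotheses, and using that the Lipschitz-along-$\R^d\times L_p(\Omega';\R^d)$ regularity of $\hat f,\hat f'$ together with $\|\delta Y\|_{\alpha;p,\infty}<\infty$ forces $t\mapsto \mathcal{L}_{Y_t}$ to be $\alpha$-H\"older in $\mathcal{W}_p$, the pair $(\tilde f,\tilde f')$ is a deterministic controlled vector field in $\mathscr{D}_W^{2\alpha}C^\gamma_b(\R^d;\mathscr{L}(\R^n,\R^d))$. The well-posedness theory for non-McKean--Vlasov rough SDEs then yields a unique $L_{p,\infty}$-integrable solution $X^Y$ of the frozen equation starting from $\xi$. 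Define $\Phi(Y):=X^Y$.

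\textbf{Step 2 — Contraction.} For $Y,\bar Y\in \mathcal{S}_\tau$, compare the frozen equations. Lipschitz continuity of $\hat b,\hat\sigma$ along $\R^d\times L_p(\Omega';\R^d)$ gives, via $\mathcal{W}_p(\mathcal{L}_{Y_t},\mathcal{L}_{\bar Y_t})\le\|Y_t-\bar Y_t\|_p$, a pointwise bound on the drift/diffusion differences in terms of $\|Y_t-\bar Y_t\|_p$. The rough-driver difference is more delicate: one must control $|\tilde f_t(x)-\bar{\tilde f}_t(x)|$ and the difference of Gubinelli derivatives (including the $D\hat f[\hat f,\widetilde{\hat f}]$ correction) in $\mathscr{D}_W^{2\alpha}C^{\gamma-1}_b$-type norms, which is exactly what the assumption $(\hat f,\hat f')\in\mathscr{D}_W^{2\alpha}C^\gamma_{b,\R^d\times L_p}$ with $\gamma>1/\alpha$ is designed to provide. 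Feeding these coefficient bounds into the quantitative stability estimate for (non-McKean--Vlasov) rough stochastic SDEs (a straightforward adaptation of the a priori/continuity-of-the-solution-map results underlying \cref{thm:wellposedness_McKeanVlasovroughSDEs}) yields, for some $\varepsilon>0$ and a constant $C$ depending only on the coefficient bounds,
\begin{equation*}
  \|\Phi(Y)-\Phi(\bar Y)\|_{\mathcal{S}_\tau}\;\le\;C\,\tau^{\varepsilon}\,\|Y-\bar Y\|_{\mathcal{S}_\tau}.
\end{equation*}

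\textbf{Step 3 — Conclusion.} Choose $\tau$ so that $C\tau^\varepsilon<1$. Banach's fixed-point theorem yields a unique fixed point $X\in\mathcal{S}_\tau$, which by construction satisfies the conditions of \cref{def:solution_McKeanVlasovroughSDE}, the Lions-derivative correction built into $\tilde f'$ recovering the McKean--Vlasov Davie expansion of \cref{rmk:FréchetderivativesinDavieexpansion}. Because $\tau$ depends only on the coefficient bounds (not on $\xi$), and because the $L_{p,\infty}$-integrability of $X_\tau$ is inherited by the bounds on $\hat b,\hat\sigma,\hat f$, we extend to $[\tau,2\tau],\ldots$ and cover $[0,T]$ in finitely many steps.

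\textbf{Main obstacle.} The genuinely delicate step is the contraction estimate in Step~2: perturbing $Y$ perturbs \emph{both} the coefficients and their Gubinelli derivatives, and the rough remainder $X^\natural$ in \eqref{eq:Davieexpansion_solutionMcKeanVlasovroughSDE} mixes the two nonlinearly through the quadratic term $D\hat f[\hat f,\widetilde{\hat f}]\mathbb{W}_{s,t}$. Getting a genuine $\tau^{\varepsilon}$-smallness (rather than an $O(1)$ loss) forces one to work in the $\mathbf{D}_W^{2\alpha}L_{p,\infty}$ topology for $(X,\hat f(X,\widetilde X))$ and to exploit the sub-Banach $C^\gamma_{b,\R^d\times L_p}$-regularity precisely as packaged by \cref{def:stochasticcontrolledvectorfield}; the choice $p>2$ is what rescues the lack of $C^2$ regularity of Lions-lifts on $L_2$ highlighted in \cref{rmk:counterexampleCD}.
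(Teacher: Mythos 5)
The paper does not prove this theorem; it is quoted directly as \cite[Theorem~3.10]{FHL25} and used as a black box, so there is no internal argument to compare against. Your strategy (freeze the measure flow, solve the frozen rough SDE, close by contraction on a short horizon) is the standard and plausible route, in line with what one expects \cite{FHL25} to do. However the sketch has a real structural problem: you define $\mathcal{S}_\tau$ by $\sup_{t\le\tau}\|Y_t\|_p<\infty$ with $Y_0=\xi$, yet the theorem assumes only $\xi\in L_2(\Omega,\mathcal{F}_0;\R^d)$, and the conclusion $(X,\hat f(X,\cancel X))\in\mathbf{D}_W^{2\alpha}L_{p,\infty}$ controls conditional $p$-th moments of \emph{increments} and uniform bounds on the Gubinelli derivative, never $\|X_t\|_p$ itself. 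As written, $\mathcal{S}_\tau$ is empty unless you silently strengthen the hypothesis to $\xi\in L_p$, which is exactly what the sub-Banach/$L_{p,\infty}$ machinery (invoked in your own ``Main obstacle'' paragraph) is designed to avoid. The fixed-point space must carry the mixed $L_2$-pointwise/$L_{p,\infty}$-increment structure.

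A second gap is that $\|\delta Y\|_{\alpha;p,\infty}<\infty$ yields $\alpha$-H\"older regularity of $t\mapsto\mathcal{L}_{Y_t}$ but not the $2\alpha$ controllability-in-time that item (iii) of \cref{def:stochasticcontrolledvectorfield} demands of the frozen pair $(\tilde f,\tilde f')$; for that, $\delta Y_{s,t}$ must itself be controlled by $\delta W_{s,t}$ in conditional mean, so $\mathcal{S}_\tau$ must carry the full stochastic controlled rough path structure $(Y,Y',R^Y)$, which your definition omits. The contraction estimate of Step~2 --- which you rightly flag as the genuinely delicate point, because perturbing $Y$ perturbs both the driver and its Gubinelli derivative through $D\hat f[\hat f,\cancel{\hat f}]\,\mathbb{W}_{s,t}$ --- is then asserted rather than proved; and \cite{BFS24} is not the right reference for the time-dependent frozen rough SDE (that paper treats backward function-valued rough PDEs), the relevant stability results being in \cite{FHL21,FHL25}.
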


\section{Nonlinear rough PDEs} \label{section:solvingmeanfieldroughPDEs}
Let $T>0$ and let $d,m,n \in \mathbb{N}_{\ge 1}$. 
We consider the mappings \begin{equation} \label{eq:coefficients_meanfieldRPDE}
    \begin{aligned}
        b&
    : [0,T] \times \R^d \times \mathcal{P}_2(\R^d) \to \R^d \\
    \sigma_l &
    : [0,T] \times \R^d \times \mathcal{P}_2(\R^d) \to \R^d \qquad l=1,\dots,m \\
    f_\kappa &
    : [0,T] \times \R^d \times \mathcal{P}_2(\R^d) \to \R^d \qquad \kappa=1,\dots,n.
    \end{aligned}
\end{equation}
We define the map $a:[0,T] \times \R^d \times \mathcal{P}_2(\R^d) \to \R^{d\times d}$
via $$a^{ij}(t,x,\mu) := \sum_{l=1}^m \sigma^i_l(t,x,\mu) \sigma^j_l(t,x,\mu) \qquad i,j=1,\dots,d .$$
Notice that the matrix $a(t,x,\mu)$ is by construction symmetric and nonnegative definite. 
For any $(t,\mu) \in [0,T] \times \mathcal{P}_2(\R^d)$, the (respectively, second and first order) time-dependent differential operators $L_t[\mu]$ and $\Gamma_t[\mu]=(\Gamma_t[\mu]_1,\dots, \Gamma_t[\mu]_n)$ are defined, for any suitably regular $\varphi:\R^d \to \R$ and for any $x \in \R^d$, as 
\begin{align*}
    L_t[\mu] \varphi (x) :&= \frac{1}{2} \partial^2_{ij} \varphi(x) a^{ij}(t,x,\mu) + \partial_i \varphi (x) b^i(t,x,\mu) \\ 
    \Gamma_t[\mu]_\kappa \varphi(x) & := \partial_i \varphi(x) f^i_\kappa(t,x,\mu) \qquad \kappa=1,\dots,n , 
\end{align*}
with Einstein's repeated indices summation convention and being $\partial_i$ the partial derivative with respect to the space variable $x^i$.
Their formal adjoint operators are respectively
\begin{align*}
    (L_t[\mu])^\star \varphi(x) &= \frac{1}{2} \partial^2_{ij} (a^{ij}(t,\cdot,\mu) \varphi)(x)  - \partial_i (b^i(t,\cdot,\mu) \varphi)(x) \\ 
    (\Gamma_t[\mu]_\kappa)^\star \varphi(x) & = - \partial_i (f_\kappa^i(t,\cdot,\mu) \varphi) (x) \qquad \kappa=1,\dots,n . 
\end{align*}
Later in the paper we assume that
the map $\mu \mapsto f_\kappa(t,x,\mu)$ is component-by-component continuously L-differentiable, denoting by $\partial_\mu f_\kappa(t,x,\mu)(\cdot)$ its L-derivative, and that $f_\kappa(t,x,\mu) - f_\kappa(s,x,\mu) \approx f'_{\kappa \lambda} (s,x,\mu) \delta Y^\lambda_{s,t}$ for some functions 
\begin{equation*}
    f'_{\kappa \lambda} 
    : [0,T] \times \R^d \times \mathcal{P}_2(\R^d) \to \R^d \qquad \kappa,\lambda=1,\dots,n  .
\end{equation*} 
This justifies the definition of the first order differential operator $$ \Gamma'_t[\mu] = \begin{pmatrix}
\Gamma'_t[\mu]_{11} & \cdots & \Gamma'_t[\mu]_{1n} \\
\vdots & \ddots & \vdots \\
\Gamma'_t[\mu]_{n1} &  \cdots & \Gamma'_t[\mu]_{nn}
\end{pmatrix}$$  as 
\begin{equation*}
    \Gamma_t'[\mu]_{\kappa\lambda} \varphi(x) := \partial_i \varphi(x) \int_{\R^d} \partial_\mu f^i_\kappa (t,x,\mu)(z) \cdot f_\lambda(t,z,\mu) \, \mu(dz)  + \partial_i \varphi(x) (f'_{\kappa\lambda})^i(t,x,\mu) ,
\end{equation*}
where $\cdot$ denotes the scalar product in $\R^d$. 
The mappings $\sigma:[0,T] \times \R^d \times \mathcal{P}_2(\R^d) \to \mathscr{L}(\R^m,\R^d)$ and $f:[0,T] \times \R^d \times \mathcal{P}_2(\R^d) \to \mathscr{L}(\R^n,\R^d)$ are defined via $\sigma_t(x,\mu) w := \sigma_l(t,x,\mu) y^l$ for any $y \in \R^m$, and $f_t(x,\mu) z := f_\kappa(t,x,\mu) z^\kappa$ for any $z \in \R^n$, respectively. In a similar way $f':[0,T] \times \R^d \times \mathcal{P}_2(\R^d) \to \mathscr{L}(\R^n \otimes \R^n ,\R^d)$ denotes the map $f_t'(x,\mu) w := f'_{\kappa\lambda}(t,x,\mu) w^{\kappa \lambda}$ for any $w \in \R^n \otimes \R^n$. 

\begin{notation}
    For any finite measure $\mu$ on $\R^d$ and for any bounded Borel measurable function $\varphi:\R^d \to \R$, we denote by \begin{equation*}
    \langle \mu , \varphi \rangle := \int_{\R^d} \varphi (x) \, \mu(dx) . 
\end{equation*}
\end{notation}

\noindent Let $\alpha \in (\frac{1}{3},\frac{1}{2}]$ and let $\mathbf{W}=(W,\mathbb{W}) \in \mathscr{C}_g^\alpha([0,T];\R^n)$ be a weakly geometric rough path.
 
\begin{defn} \label{def:solution_meanfieldroughPDE}
    A solution to \footnote{In the literature, one can also find equation \eqref{eq:meanfieldroughPDEs} written as \begin{equation*}
    d\mu_t = \left(\frac{1}{2} \nabla^2_x : (a(t,\cdot,\mu_t) \mu_t) - \nabla_x \cdot (b(t,\cdot,\mu_t) \mu_t)  \right)  dt - \nabla_x \cdot (f(t,\cdot,\mu_t) \mu_t) \, d\mathbf{W}_t 
\end{equation*} or \begin{equation*}
    d\mu_t = (L_t[\mu_t])^\star \mu_t \, dt + (\Gamma_t[\mu_t])^\star \mu_t \, d\mathbf{W}_t .
\end{equation*}} \begin{equation} \label{eq:meanfieldroughPDEs}
        d\mu_t = \left(\frac{1}{2} \partial^2_{ij} (a^{ij}(t,\cdot,\mu_t) \mu_t) -\partial_i (b^i(t,\cdot,\mu_t)\mu_t) \right) \, dt -  \partial_i (f^i(t,\cdot,\mu_t) \mu_t) \, d\mathbf{W}_t
    \end{equation}
    is a weakly-continuous map $\mu : [0,T] \to \mathcal{P}_2(\R^d)$ such that, for any test function $\varphi \in C^\gamma_b(\R^d;\R)$ with $\gamma \in (\frac{1}{\alpha},3]$, the following conditions are satisfied: 
    \begin{enumerate} \renewcommand{\labelenumi}{(\roman{enumi})}
        \item the map $t \mapsto \langle \mu_t, L_t[\mu_t] \varphi \rangle$ belongs to $L_1([0,T];\R)$ ; \label{prova}
        \item for any $s,t \in [0,T]$, for any $\kappa,\lambda=1,\dots,n$, and uniformly over bounded sets of $\varphi$ in $C^\gamma_b(\R^d;\R)$,
        \begin{align*}
              &\langle \mu_t, (\Gamma_t[\mu_t]_\kappa \Gamma_t[\mu_t]_\lambda + \Gamma'_t[\mu_t]_{\lambda \kappa} ) \varphi  \rangle \overset{\alpha}{=} \langle \mu_s, (\Gamma_s[\mu_s]_\kappa \Gamma_s[\mu_s]_\lambda + \Gamma'_s[\mu_s]_{\lambda \kappa} ) \varphi  \rangle    \\
              &\langle \mu_t, \Gamma_t[\mu_t]_\kappa \varphi  \rangle - \langle \mu_s, \Gamma_s[\mu_s]_\kappa \varphi  \rangle  \overset{2\alpha}{=} \langle \mu_s, (\Gamma_s[\mu_s]_\eta \Gamma_s[\mu_s]_\kappa + \Gamma'_s[\mu_s]_{\kappa\eta} ) \varphi  \rangle \delta W_{s,t}^\eta ;
        \end{align*}
        \item for any $0 \le s \le t \le T$ \begin{multline*}
            \langle \mu_t, \varphi \rangle - \langle \mu_s, \varphi \rangle = \int_s^t \langle \mu_r, L_r[\mu_r] \varphi \rangle \, dr + \langle \mu_s, \Gamma_s[\mu_s]_\kappa \varphi  \rangle \delta W^\kappa_{s,t} + \\ + \langle \mu_s, (\Gamma_s[\mu_s]_\kappa \Gamma_s[\mu_s]_\lambda + \Gamma'_s[\mu_s]_{\lambda \kappa}) \varphi  \rangle \mathbb{W}^{\kappa\lambda}_{s,t} + \mu_{s,t}^{\natural,\varphi},
        \end{multline*}
        with $\mu_{s,t}^{\natural,\varphi}=o(|t-s|)$ as $|t-s| \to 0$. 
    \end{enumerate}
    When the initial condition $\mu_0 = \nu \in \mathcal{P}_2(\R^d)$ is specified, we say that $\mu$ is a solution to \eqref{eq:meanfieldroughPDEs} starting from $\nu$. 
\end{defn}

\begin{prop}
    Let $\mu : [0,T] \to \mathcal{P}_2(\R^d)$ satisfying condition (ii) of \cref{def:solution_meanfieldroughPDE}.
    Then the following limit exists for any $t \in [0,T]$ along any sequence of partitions $\pi$ of $[0,t]$ whose mesh size tends to zero: 
    \begin{multline*}
        \int_0^t  \langle \mu_r, \Gamma_r[\mu_r] \varphi  \rangle d \mathbf{W}_r := \lim_{|\pi|\to0} \sum_{[s,u]\in \pi} \big(  \langle \mu_s, \Gamma_s[\mu_s]_\kappa \varphi  \rangle \delta W^\kappa_{s,u} + \\ + \langle \mu_s, (\Gamma_s[\mu_s]_\kappa \Gamma_s[\mu_s]_\lambda + \Gamma'_s[\mu_s]_{\lambda \kappa}) \varphi  \rangle \mathbb{W}^{\kappa\lambda}_{s,u} \big).
    \end{multline*}
    In particular, condition (iii) of \cref{def:solution_meanfieldroughPDE} can be equivalently replaced by   \begin{itemize}
    \item[(iii')] for any $t \in [0,T]$ \begin{equation*}
        \langle \mu_t, \varphi \rangle = \langle \mu_0, \varphi \rangle + \int_0^t \langle \mu_r, L_r[\mu_r] \varphi \rangle \, dr + \int_0^t  \langle \mu_r, \Gamma_r[\mu_r] \varphi  \rangle d \mathbf{W}_r ,
    \end{equation*} 
\end{itemize}
and the quantity $\mu_{s,t}^{\natural,\varphi}$ in fact satisfies $|\mu_{s,t}^{\natural,\varphi}| \le C_{\varphi} |t-s|^{3\alpha}$, where $C_\varphi > 0$ 
is uniformly over bounded sets of $\varphi$ in $C^\gamma_b(\R^d;\R)$.
\end{prop}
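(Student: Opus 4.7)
The plan is to recognise the right-hand side of (iii) as a compensated germ and apply the standard rough-path sewing lemma. For brevity, set
\begin{equation*}
A_s^\kappa := \langle \mu_s, \Gamma_s[\mu_s]_\kappa\varphi\rangle, \qquad B_s^{\kappa\lambda} := \langle \mu_s, (\Gamma_s[\mu_s]_\kappa\Gamma_s[\mu_s]_\lambda + \Gamma'_s[\mu_s]_{\lambda\kappa})\varphi\rangle,
\end{equation*}
and let $\Xi_{s,t} := A_s^\kappa\delta W_{s,t}^\kappa + B_s^{\kappa\lambda}\mathbb{W}_{s,t}^{\kappa\lambda}$. The candidate integral is the sewn object $\mathcal{I}_t := \int_0^t\langle \mu_r,\Gamma_r[\mu_r]\varphi\rangle\, d\mathbf{W}_r$ produced by applying the sewing lemma to $\Xi$; the Riemann-sum description and the sharp $O(|t-s|^{3\alpha})$ remainder are then automatic from the lemma, once its hypothesis is verified.

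The technical heart of the proof is therefore the coboundary estimate on $\delta\Xi_{s,u,t} := \Xi_{s,t}-\Xi_{s,u}-\Xi_{u,t}$ for $s\le u\le t$. Using Chen's relation \eqref{eq:chenrelation} in the form $\mathbb{W}_{s,t}^{\kappa\lambda}-\mathbb{W}_{s,u}^{\kappa\lambda} = \mathbb{W}_{u,t}^{\kappa\lambda} + \delta W_{s,u}^\kappa\delta W_{u,t}^\lambda$ and rearranging, one obtains, after one harmless dummy-index relabelling,
\begin{equation*}
\delta\Xi_{s,u,t} \;=\; -\bigl(\delta A_{s,u}^\kappa - B_s^{\eta\kappa}\delta W_{s,u}^\eta\bigr)\delta W_{u,t}^\kappa \;-\; \delta B_{s,u}^{\kappa\lambda}\,\mathbb{W}_{u,t}^{\kappa\lambda} .
\end{equation*}
This is where condition (ii) of \cref{def:solution_meanfieldroughPDE} enters: its two lines are precisely the bounds $|\delta A_{s,u}^\kappa - B_s^{\eta\kappa}\delta W_{s,u}^\eta|=O(|u-s|^{2\alpha})$ and $|\delta B_{s,u}^{\kappa\lambda}|=O(|u-s|^\alpha)$, with constants uniform over bounded subsets of $C_b^\gamma(\R^d;\R)$. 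Combined with the H\"older bounds on $\delta W$ and $\mathbb{W}$, this gives $|\delta\Xi_{s,u,t}|\le C_\varphi|t-s|^{3\alpha}$, and $3\alpha>1$ by the standing assumption $\alpha>1/3$. I expect the only mildly delicate part of the argument to be keeping the free indices $\kappa,\lambda,\eta$ straight in this Chen computation; everything else is bookkeeping.

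With the coboundary estimate, the sewing lemma (see \cite{FRIZHAIRER}) produces a unique continuous $\mathcal{I}:[0,T]\to\R$ with $\mathcal{I}_0=0$ satisfying $|\mathcal{I}_t-\mathcal{I}_s-\Xi_{s,t}|\le C_\varphi'|t-s|^{3\alpha}$, obtainable as the claimed mesh-size limit; this is the definition of $\int_0^t\langle \mu_r,\Gamma_r[\mu_r]\varphi\rangle\, d\mathbf{W}_r$. For the equivalence of (iii) and (iii'), set $F_t := \langle \mu_t,\varphi\rangle - \langle \mu_0,\varphi\rangle - \int_0^t\langle \mu_r,L_r[\mu_r]\varphi\rangle\, dr$; condition (iii) then reads $\delta F_{s,t} - \Xi_{s,t} = \mu_{s,t}^{\natural,\varphi}$ with $\mu_{s,t}^{\natural,\varphi}=o(|t-s|)$ uniformly in $(s,t)$. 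A Young-type uniqueness argument applied to $F-\mathcal{I}$ (telescoping the increment over a uniform partition of $[s,t]$ and letting the mesh tend to zero) forces $F_\cdot = \mathcal{I}_\cdot$, which both yields (iii') and automatically upgrades the remainder to $|\mu_{s,t}^{\natural,\varphi}|\le C_\varphi|t-s|^{3\alpha}$. The converse (iii')$\Rightarrow$(iii) is immediate from the sewing bound on $\mathcal{I}-\Xi$, and the uniformity of $C_\varphi$ over bounded subsets of $C^\gamma_b(\R^d;\R)$ is inherited from the uniformity of the constants in condition (ii).
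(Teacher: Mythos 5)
Your proof is correct and follows the same approach as the paper: set up the germ $\Xi_{s,t}$, compute its coboundary via Chen's relation, control it by the two estimates in condition (ii) to get $|\delta\Xi_{s,u,t}|\lesssim_\varphi|t-s|^{3\alpha}$, and invoke the sewing lemma. You spell out the equivalence of (iii) and (iii') and the sharpened bound on $\mu^{\natural,\varphi}_{s,t}$ in slightly more detail than the paper (which only records the coboundary estimate and lets the rest follow from the sewing lemma), but the mechanism is identical.
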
 

\begin{proof}
    The proof is essentially a consequence of the sewing lemma (see \cite[Lemma 4.2]{FRIZHAIRER}). 
    Let $t \in [0,T]$. For any $\varphi \in C^\gamma_b(\R^d;\R)$ with $\gamma \in (\frac{1}{\alpha}, 3]$ and for any $s,u \in [0,t]$ define \begin{equation*}
        \Xi_{s,u}^\varphi := \langle \mu_s, \Gamma_s[\mu_s]_\kappa \varphi  \rangle \delta W^\kappa_{s,u} + \\ + \langle \mu_s, (\Gamma_s[\mu_s]_\kappa \Gamma_s[\mu_s]_\lambda + \Gamma'_s[\mu_s]_{\lambda \kappa}) \varphi  \rangle \mathbb{W}^{\kappa\lambda}_{s,u}. 
    \end{equation*}
    Recalling \eqref{eq:chenrelation}, for any $s,u,v \in [0,t]$ we have \begin{align*}
        \Xi_{s,v}^\varphi - \Xi_{s,u}^\varphi - \Xi_{u,v}^\varphi &= (\langle \mu_s, \Gamma_s[\mu_s]_\kappa \varphi  \rangle - \langle \mu_u, \Gamma_u[\mu_u]_\kappa \varphi  \rangle) \delta W^\kappa_{u,v} + \\
        & \quad + \langle (\mu_s, \Gamma_s[\mu_s]_\kappa \Gamma_s[\mu_s]_\lambda + \Gamma'_s[\mu_s]_{\lambda \kappa}) \varphi \rangle (\mathbb{W}^{\kappa\lambda}_{u,v} + \delta W^\kappa_{s,u} \delta{W}^\lambda_{u,v}) + \\
        & \quad - \langle \mu_u,  (\Gamma_u[\mu_u]_\kappa \Gamma_u[\mu_u]_\lambda + \Gamma'_u[\mu_u]_{\lambda \kappa})\varphi \rangle \mathbb{W}^{\kappa\lambda}_{u,v}.  
    \end{align*}
        From condition (ii) of \cref{def:solution_meanfieldroughPDE} we deduce that $|\Xi_{s,v}^\varphi - \Xi_{s,u}^\varphi - \Xi_{u,v}^\varphi| \le C_\varphi |t-s|^{3\alpha}$ and $C_\varphi$ is uniform over bounded sets of $\varphi \in C^\gamma_b(\R^d;\R)$. 
\end{proof}

 Our main result is the following

\begin{thm} \label{thm:mainresult}
    Under \cref{assumptions_existence} and \cref{assumptions_uniqueness} below and for any $\nu \in \mathcal{P}_2(\R^d)$, there exists a unique solution to \eqref{eq:meanfieldroughPDEs} starting from $\nu$. 
    Moreover, such a solution can be characterized as the curve of one-dimensional marginals of the $L_{4,\infty}$-integrable solution to a McKean--Vlasov rough SDE of the form \eqref{eq:McKean-VlasovroughSDE_existence} below. 
\end{thm}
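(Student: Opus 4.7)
The plan is to prove existence and uniqueness separately, using the SDE--PDE bridge as the central tool. For existence I will construct a solution of the associated McKean--Vlasov rough SDE and show that the one-dimensional marginal flow solves the rough Fokker--Planck equation. For uniqueness I will freeze the measure argument in the coefficients and invoke well-posedness of the resulting linear rough PDE (Section \ref{section:linearroughPDEs}) combined with uniqueness for the classical rough SDE (Theorem \ref{thm:wellposedness_McKeanVlasovroughSDEs} specialised to the measure-independent case).

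\textbf{Existence.} Given $\nu \in \mathcal{P}_2(\R^d)$, choose $\xi \in L_2(\Omega,\mathcal{F}_0;\R^d)$ with $\mathcal{L}_\xi = \nu$. Under \cref{assumptions_existence}, Theorem \ref{thm:wellposedness_McKeanVlasovroughSDEs} (taking $p=4$) yields a unique $L_{4,\infty}$-integrable solution $X$ to the McKean--Vlasov rough SDE with data $(b,\sigma,f,f')$ starting from $\xi$. Set $\mu_t := \mathcal{L}_{X_t}$; continuity in $\mathcal{W}_2$ follows from the $L_{4,\infty}$-estimate together with the bound $\mathcal{W}_2(\mu_s,\mu_t) \le \|X_t - X_s\|_2$. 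To verify \cref{def:solution_meanfieldroughPDE}, I take $\varphi \in C^\gamma_b(\R^d;\R)$ with $\gamma \in (1/\alpha,3]$, apply a rough It\^o formula to $\varphi(X_t)$ using the Davie expansion \eqref{eq:Davieexpansion_solutionMcKeanVlasovroughSDE}, and take expectations. The martingale contribution from $B$ vanishes, the bounded-variation part produces $\int_s^t \langle \mu_r, L_r[\mu_r]\varphi\rangle\, dr$, and the rough increment identifies $\langle \mu_s,\Gamma_s[\mu_s]_\kappa\varphi\rangle$ as the Gubinelli derivative; the second-order rough correction, read off from \cref{rmk:FréchetderivativesinDavieexpansion}, yields exactly $\langle \mu_s,(\Gamma_s[\mu_s]_\kappa\Gamma_s[\mu_s]_\lambda + \Gamma'_s[\mu_s]_{\lambda\kappa})\varphi\rangle$. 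The regularity of the coefficients together with the $L_{4,\infty}$-integrability of $X$ delivers the required $\alpha$- and $2\alpha$-H\"older bounds in condition (ii), and controls the remainder $\mu^{\natural,\varphi}_{s,t}$ uniformly in $\varphi$ on bounded sets of $C^\gamma_b(\R^d;\R)$, giving condition (iii).

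\textbf{Uniqueness.} Let $\mu, \bar\mu$ be two solutions starting from $\nu$. Freezing the measure argument along $\mu$ defines measure-independent coefficients $\tilde b(t,x) := b(t,x,\mu_t)$, $\tilde\sigma(t,x) := \sigma(t,x,\mu_t)$, $(\tilde f,\tilde f')(t,x) := (f,f')(t,x,\mu_t)$, which inherit the regularity required for the linear theory of \cref{section:linearroughPDEs}. By construction $\mu$ solves the \emph{linear} rough PDE with these coefficients; on the other hand, solving the classical (measure-independent) rough SDE with data $(\tilde b,\tilde\sigma,\tilde f,\tilde f')$ and initial law $\nu$ produces a process $X^\mu$, and the It\^o-type argument from the existence step shows that $\mathcal{L}_{X^\mu}$ also solves this same linear rough PDE. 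By the uniqueness result for the linear equation (based on duality with the backward equation of \cite{BFS24}), we conclude $\mu_t = \mathcal{L}_{X^\mu_t}$, and analogously $\bar\mu_t = \mathcal{L}_{X^{\bar\mu}_t}$. Comparing the two rough SDEs through the Lipschitz dependence of $b,\sigma,f,f'$ on the measure argument in $\mathcal{W}_2$, and using the stability of rough SDEs together with $\mathcal{W}_2(\mu_t,\bar\mu_t) \le \|X^\mu_t - X^{\bar\mu}_t\|_2$, produces an estimate of the form
\begin{equation*}
\sup_{r \in [0,t]} \mathcal{W}_2(\mu_r,\bar\mu_r)^2 \lesssim \int_0^t \sup_{r \in [0,s]} \mathcal{W}_2(\mu_r,\bar\mu_r)^2 \, ds,
\end{equation*}
and Gr\"onwall's lemma yields $\mu = \bar\mu$.

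\textbf{Main obstacle.} The genuinely delicate point is the existence step: one must verify that $(\hat f(X,\cancel X), D\hat f(X,\cancel X)[\hat f(X,\cancel X), \cancel{\hat f(X,\cancel X)}] + \hat f'(X,\cancel X))$ is a stochastic controlled rough path in the sense of \cref{def:solution_McKeanVlasovroughSDE}(ii), and then justify the passage to expectations in the rough stochastic integral so as to recover a deterministic rough integral against $d\mathbf{W}$ with Gubinelli derivative $\langle \mu_\cdot, \Gamma_\cdot[\mu_\cdot]\varphi\rangle$. This in turn requires careful use of the jointly measurable $\mu$-version of $\partial_\mu f$ and of \cref{lemma:remainderinTaylorexpansion} to control the L-derivative contributions in Wasserstein distance, and dimension-free estimates that exploit the $C^\gamma_{b,\R^d \times L_p(\Omega';\R^d)}$-regularity of the lifted coefficients rather than classical Sobolev embeddings, which is precisely where the present rough-path approach escapes the dimension-dependent hypotheses of \cite{CG19}.
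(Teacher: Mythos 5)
Your existence step matches the paper's \cref{thm:existence_meanfieldroughPDE} essentially verbatim, and the first half of your uniqueness step (freeze the measure argument, observe that $\mu$ and $\mathcal{L}_{X^\mu}$ both solve the same \emph{linear} rough PDE, invoke the duality result \cref{prop:duality} to conclude $\mu_t = \mathcal{L}_{X^\mu_t}$) is also exactly what the paper does in \cref{thm:uniqueness_meanfieldroughPDEs}. The divergence is in how you close the uniqueness argument, and there your step is not correct as written.

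Once one knows $\mu_t=\mathcal{L}_{X^\mu_t}$, the paper observes that the frozen coefficients $f^\mu_t(x)=f(t,x,\mu_t)$ agree pointwise with $f(t,x,\mathcal{L}_{X^\mu_t})$, so that $X^\mu$ is in fact an $L_{4,\infty}$-integrable solution of the \emph{McKean--Vlasov} rough SDE \eqref{eq:solution_McKeanVlasovroughSDE}; applying the already-proved uniqueness of that equation (\cref{thm:wellposedness_McKeanVlasovroughSDEs}) immediately identifies $X^\mu=X^{\bar\mu}$ and hence $\mu=\bar\mu$. You instead try to compare $X^\mu$ and $X^{\bar\mu}$ as solutions of two different classical rough SDEs and close via the Gr\"onwall bound
$\sup_{r\le t}\mathcal{W}_2(\mu_r,\bar\mu_r)^2\lesssim\int_0^t\sup_{r\le s}\mathcal{W}_2(\mu_r,\bar\mu_r)^2\,ds$.
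This inequality cannot hold in a rough setting: the contribution of $\int(\cdot)\,d\mathbf{W}$ to the increment $\|X^\mu_t-X^{\bar\mu}_t\|_2$ is of order $|t-s|^\alpha$ for $\alpha\le\frac12$, not $|t-s|^1$, so the rough term cannot be absorbed into a Lebesgue integral and a single Gr\"onwall pass does not close. One would have to run a small-interval fixed-point iteration with the full controlled-path norms, which is precisely the nontrivial content of the uniqueness proof for McKean--Vlasov rough SDEs in \cite{FHL25} that the paper treats as a black box. The paper's route is both shorter and avoids re-proving a rough a-priori estimate that is not easy to set up correctly.

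Two smaller omissions: uniqueness in \cref{thm:uniqueness_meanfieldroughPDEs} is asserted only within the class of curves satisfying its conditions \emph{1.}\ and \emph{2.}\ (the $\alpha$-H\"older estimate on $|\langle\mu_t-\mu_s,\phi\rangle|$ and the controlled structure $(f^\mu,(f^\mu)')\in\mathscr{D}^{2\alpha}_W C^5_b$), and to make existence and uniqueness combine into ``there exists a unique solution'' one must check that the marginal flow $\mu_t=\mathcal{L}_{X_t}$ produced in the existence step actually lies in this class; this is the role of \cref{prop:stabilityundercomposition_uniqueness}, which your proposal does not invoke. You use these conditions implicitly (to well-pose the frozen rough SDE and to apply \cref{prop:duality}) but never close the logical loop.
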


\begin{proof}
    The proof follows by combining \cref{thm:existence_meanfieldroughPDE} with \cref{thm:uniqueness_meanfieldroughPDEs} and \cref{prop:stabilityundercomposition_uniqueness}. 
\end{proof}

\subsection{Existence} \label{section:existence}
The following extend the \textit{full $\mathcal{C}^2$-regularity} assumptions introduced at the beginning of \cite[Section 5.6.2]{CARMONADELARUE_volumeI} to the case of time- and state-dependent functions. 
Since we are in a rough path framework, time dependence requires extra care, and we need to impose controllability-in-time conditions.

\begin{assumptions} \label{assumptions_existence}
Let $b:[0,T] \times \R^d \times \mathcal{P}_2(\R^d) \to \R^d$ and $\sigma:[0,T] \times \R^d \times \mathcal{P}_2(\R^d) \to \mathscr{L}(\R^m,\R^d)$ be Borel measurable, globally bounded and Lipschitz continuous in $(x,\mu)$ uniformly in $t$.
Let $f:[0,T] \times \R^d \times \mathcal{P}_2(\R^d) \to \mathscr{L}(\R^n,\R^d)$ be Borel measurable and globally bounded, and suppose that  \begin{enumerate} \renewcommand{\labelenumi}{(\alph{enumi})}
        
        \item for any $(t, \mu) \in [0,T] \times \mathcal{P}_2 (\mathbb{R}^d)$, the map $\mathbb{R}^d \ni x \mapsto f (t, x, \mu)$ is twice differentiable in classical sense. Moreover, its first derivative $D_x f  : [0, T] \times \mathbb{R}^d \times \mathcal{P}_2    (\mathbb{R}^d) \rightarrow \mathscr{L} (\mathbb{R}^n , \mathbb{R}^d \otimes \mathbb{R}^d)$ is globally bounded, and its second derivative $D^2_xf  : [0, T] \times \mathbb{R}^d \times \mathcal{P}_2 (\mathbb{R}^d) \rightarrow \mathscr{L} (\mathbb{R}^n , \mathbb{R}^d \otimes \mathbb{R}^d \otimes \mathbb{R}^d)$ is globally bounded, Lipschitz continuous in $(x,\mu)$ uniformly in $t$ and $\alpha$-H\"older continuous in $t$ uniformly in $(x, \mu)$;
        
        \item for any $(t, \mu) \in [0,T] \times \mathcal{P}_2 (\mathbb{R}^d)$, the map $\mathcal{P}_2 (\mathbb{R}^d) \ni \mu \mapsto D_x f (t, x,\mu)$ is component-by-component continuously L-differentiable. Moreover, $\partial_{\mu} D_xf : [0, T] \times \mathbb{R}^d \times \mathcal{P}_2 (\mathbb{R}^d) \times \mathbb{R}^d \rightarrow \mathscr{L} (\mathbb{R}^n , \mathbb{R}^d\otimes \mathbb{R}^d \otimes \mathbb{R}^d)$ has a $\mu$-version such that $\partial_{\mu} D_x f (t, x, \mu) (v)$ is globally bounded, Lipschitz continuous in $(x, \mu, v)$ uniformly in $t$ and $\alpha$-H\"older continuous in $t$ uniformly in $(x, \mu, v)$;
        
        \item for any $(t, x) \in [0, T] \times \mathbb{R}^d$, the map $\mathcal{P}_2 (\mathbb{R}^d) \ni \mu \mapsto f (t, x, \mu)$ is component-by-component continuously L-differentiable. Moreover, $\partial_{\mu} f : [0, T] \times \mathbb{R}^d \times \mathcal{P}_2 (\mathbb{R}^d) \times \mathbb{R}^d \rightarrow \mathscr{L} (\mathbb{R}^n , \mathbb{R}^d \otimes \mathbb{R}^d)$ has a $\mu$-version such that $\partial_{\mu} f (t, x, \mu) (v)$ is uniformly bounded in $(t, x, \mu, v)$; 
    
        \item for any $(t, x, \mu) \in [0, T] \times \mathbb{R}^d \times \mathcal{P}_2 (\mathbb{R}^d)$, the version of $\mathbb{R}^d \ni v \mapsto \partial_{\mu} f (t, x, \mu) (v)$ in (c) is differentiable in classical sense. Moreover, $D_v \partial_{\mu} f : [0,T] \times \mathbb{R}^d \times \mathcal{P}_2 (\mathbb{R}^d) \times \mathbb{R}^d \rightarrow \mathscr{L} (\mathbb{R}^n , \mathbb{R}^d \otimes \mathbb{R}^d \otimes \mathbb{R}^d)$ is globally bounded, Lipschitz continuous in $(x, \mu, v)$ uniformly in $t$ and $\alpha$-H\"older continuous in $t$ uniformly in $(x, \mu, v)$;
    
        \item for any $(t, x, v) \in [0, T] \times \mathbb{R}^d \times \mathbb{R}^d$, the version of $\mathcal{P}_2 (\mathbb{R}^d) \ni \mu \mapsto \partial_{\mu} f (t, x, \mu) (v)$ in (c) is component-by-component continuously L-differentiable. Moreover, its Lions derivative $\partial_{\mu}^2 f : [0,T] \times \R^d \times \mathcal{P}_2(\R^d) \times \R^d \times \R^d \to \mathscr{L}(\R^n,\R^d \otimes \R^d \otimes \R^d)$ is globally bounded, Lipschitz continuous in $(x, \mu, v, v')$ uniformly in $t$ and $\alpha$-H\"older continuous in $t$ uniformly in $(x, \mu, v, v')$;
    
        \item for any $(t, \mu, v) \in [0, T] \times \mathcal{P}_2 (\mathbb{R}^d) \times \mathbb{R}^d$, the version of $\mathbb{R}^d \ni x \mapsto \partial_{\mu} f (t, x, \mu) (v)$ in (c) is differentiable in classical sense. Moreover, its derivative $D_x \partial_{\mu} f : [0,T] \times \R^d \times \mathcal{P}_2(\R^d) \times \R^d \to \mathscr{L}(\R^n,\R^d \otimes \R^d \otimes \R^d)$ is globally bounded, Lipschitz continuous in $(x, \mu, v)$ uniformly in $t$ and $\alpha$-H\"older continuous in $t$ uniformly in $(x, \mu, v)$.
    \end{enumerate}
    Let $f':[0,T] \times \R^d \times \mathcal{P}_2(\R^d) \to \mathscr{L}(\R^n \otimes \R^n ,\R^d)$ be Borel measurable and globally bounded, and assume that 
    \begin{enumerate} \renewcommand{\labelenumi}{(\alph{enumi})} \setcounter{enumi}{6}        
        \item for any $(t, \mu) \in [0, T] \times \mathcal{P}_2 (\mathbb{R}^d)$, the map $\mathbb{R}^d \ni x \mapsto f' (t, x, \mu)$ is differentiable in classical sense. Moreover, $D_x f ' : [0, T] \times \mathbb{R}^d \times \mathcal{P}_2 (\mathbb{R}^d) \rightarrow \mathscr{L} (\mathbb{R}^n \otimes \mathbb{R}^n , \mathbb{R}^d \otimes \mathbb{R}^d)$ is globally bounded, Lipschitz continuous in $(x, \mu)$ uniformly in $t$ and $\alpha$-H\"older continuous in $t$ uniformly in $(x, \mu)$;
    
        \item for any $(t, x) \in [0, T] \times \mathbb{R}^d$, the map $\mathcal{P}_2 (\mathbb{R}^d) \ni \mu \mapsto f' (t, x, \mu)$ is component-by-component continuously L-differentiable. Moreover, $\partial_{\mu} f' : [0, T]  \times \mathbb{R}^d \times \mathcal{P}_2 (\mathbb{R}^d) \times \mathbb{R}^d \rightarrow \mathscr{L} (\mathbb{R}^n , \mathbb{R}^d \otimes \mathbb{R}^d)$ has a $\mu$-version such that $\partial_{\mu} f' (t, x, \mu) (v)$ is uniformly bounded in $(t, x, \mu, v)$, Lipschitz continuous in $(x, \mu, v)$ uniformly in $t$ and $\alpha$-H\"older continuous in $t$ uniformly in $(x, \mu, v)$. 
  \end{enumerate}
    Moreover, assume that $f$ is controlled in time by $f'$ in the following way: 
  \begin{enumerate} \renewcommand{\labelenumi}{(\alph{enumi})} \setcounter{enumi}{8}
      \item
        \[ \sup_{x \in \mathbb{R}^d, \mu \in \mathcal{P}_2 (\mathbb{R}^d)} \sup_{s < t}  \frac{| f (t, x, \mu) - f (s, x, \mu) - f' (s, x, \mu) \delta W_{s, t} |}{| t - s |^{2 \alpha}} < + \infty ; \]
        
        \item
        \[ \sup_{x \in \mathbb{R}^d, \mu \in \mathcal{P}_2 (\mathbb{R}^d)}  \sup_{s < t}  \frac{| D_x f (t, x, \mu) - D_x f (s, x, \mu) - D_x
           f' (s, x, \mu) \delta W_{s, t} |}{| t - s |^{2 \alpha}} < + \infty ; \]
        \item
        \[ \sup_{x,v \in \mathbb{R}^d, \mu \in \mathcal{P}(\mathbb{R}^d)} \sup_{s < t}  \frac{| \partial_{\mu} f (t, x, \mu) (v) - \partial_{\mu} f (s, x, \mu) (v) - \partial_{\mu} f' (s, x, \mu) (v) \delta W_{s, t} |}{| t - s |^{2 \alpha}} < + \infty ; \]
  \end{enumerate}
\end{assumptions}

\noindent As an example, $(f(t, x,\mu),f'(t, x,\mu)) := (\int_{\R^d} g_t(x,y) \, \mu(dy) , \int_{\R^d} g'_t(x,y) \, \mu(dy))$ for some deterministic controlled vector field $(g,g') \in \mathscr{D}_W^{2\alpha} C_b^3(\R^d \times \R^d; \mathscr{L}(\R^n,\R^d))$, and $(f(t, x,\mu),f'(t, x,\mu)) : = (\phi_t(x,\int_{\R^d} y \, \mu(dy)) , \phi'_t(x,\int_{\R^d} y \, \mu(dy)))$ for $(\phi,\phi') \in \mathscr{D}_W^{2\alpha} C_b^3(\R^d \times \R; \mathscr{L}(\R^n,\R^d))$, both satisfy \cref{assumptions_existence}.
Observe also that, by considering $f$ as time independent and setting $f'=0$, \cref{assumptions_existence} recover \cite[Regularity assumptions 1 and 2]{BCD19}.

\begin{prop} \label{prop:liftofstochasticcontrolledvectorfields}
    Let $(\Omega,\mathcal{F},\mathbb{P})$ be an atomless probability space.
    Define $\hat{b}:[0,T] \times \R^d \times L_2(\Omega;\R^d) \to \R^d$ by $\hat b_t (x,X) := b(t,x,\mathcal{L}_X)$, and similarly for $\hat \sigma, \hat f$ and $\hat f'$. 
    Let \cref{assumptions_existence} be in force. Then, for any $p \in (2,\infty)$, the following hold: \begin{enumerate}
        \item for any $t \in [0,T]$, 
        $$\hat b_t (\cdot , \cdot) \in C^1_{b, \R^d \times L_p(\Omega;\R^d)}(\R^d \times L_2(\Omega';\R^d);\R^d)$$ and $\sup_{t \in [0,T]} (|\hat b_t (\cdot , \cdot)|_{\infty} + |\hat b_t (\cdot , \cdot)|_{Lip;\R^d \times L_p(\Omega;\R^d)}) < +\infty$, with similar properties for $\hat \sigma$; 
    \item  \begin{equation*}
        (\hat{f},\hat{f}') \in \mathscr{D}_W^{2\alpha} C_{b,\R^d \times L_p(\Omega;\R^d)}^{p\wedge 3} (\R^d \times L_2(\Omega;\R^d); \mathscr{L}(\R^n,\R^d)) .
    \end{equation*}
    \end{enumerate}
\end{prop}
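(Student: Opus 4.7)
The plan is essentially a translation exercise. \cref{assumptions_existence} are phrased on $b,\sigma,f,f'$ viewed as mappings of $(x,\mu)$, whereas \cref{def:stochasticcontrolledvectorfield} and \cref{thm:wellposedness_McKeanVlasovroughSDEs} require regularity of their Lions lifts as mappings of $(x,X)\in\R^d\times L_2(\Omega;\R^d)$, differentiable along the continuously embedded subspace $\R^d\times L_p(\Omega;\R^d)$. The proof is therefore an item-by-item verification, with one essential technical ingredient (the H\"older step below).

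\textbf{Part (1).} Boundedness of $\hat b_t,\hat\sigma_t$ is inherited from $b,\sigma$. For the Lipschitz estimate along $\R^d\times L_p$, given $(x,X),(x',X')\in\R^d\times L_2$ with $X-X'\in L_p$,
\[
|\hat b_t(x,X)-\hat b_t(x',X')|\le \mathrm{Lip}(b)\bigl(|x-x'|+\mathcal{W}_2(\mathcal{L}_X,\mathcal{L}_{X'})\bigr)\le \mathrm{Lip}(b)\bigl(|x-x'|+\|X-X'\|_p\bigr),
\]
using $\mathcal{W}_2(\mathcal{L}_X,\mathcal{L}_{X'})\le\|X-X'\|_2\le\|X-X'\|_p$ because $\mathbb{P}$ is a probability measure and $p\ge 2$. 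The same reasoning applies to $\hat\sigma$.

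\textbf{Part (2), spatial-measure derivatives.} Fix $t\in[0,T]$. By the chain rule for L-derivatives (cf.\ the computation in \cref{rmk:FréchetderivativesinDavieexpansion}), the Fréchet derivative of $\hat f_t$ along $\R^d\times L_2$, and a fortiori along the continuously embedded subspace $\R^d\times L_p$, at a test direction $(y,Y)$ reads
\[
D\hat f_t(x,X)[y,Y]=D_xf(t,x,\mathcal{L}_X)\,y+\E\bigl(\partial_\mu f(t,x,\mathcal{L}_X)(X)\cdot Y\bigr).
\]
Iterating produces explicit expressions for $D^2\hat f_t$ in terms of $D_x^2f,\,D_x\partial_\mu f,\,\partial_\mu D_xf,\,D_v\partial_\mu f,\,\partial_\mu^2f$, and for $D^3\hat f_t$ in terms of the corresponding third-order mixed derivatives. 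Assumptions (a)--(f) ensure all these kernels are uniformly bounded, hence $|\hat f_t|_\infty+|D\hat f_t|_\infty+|D^2\hat f_t|_\infty+|D^3\hat f_t|_\infty$ is bounded uniformly in $t$; the analogous statement for $\hat f'_t$ up to one order less follows from (g)--(h).

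\textbf{Part (2), the main H\"older step.} The technically delicate item is the H\"older regularity of the top-order derivative along $\R^d\times L_p$. When $p\in(2,3]$, so $p\wedge 3=p$, we need $D^2\hat f_t$ to be $(p-2)$-H\"older along $\R^d\times L_p$. Differences of the measure-dependent terms, of the form $\E\bigl(\partial_\mu^2 f(t,x,\mathcal{L}_X)(X,X)[Y_1,Y_2]\bigr)$ under a perturbation $X\rightsquigarrow\bar X$ in $L_p$, are handled exactly by the H\"older inequality argument of \cref{rmk:counterexampleCD}: with $r$ defined by $1/r+2/p=1$, one bounds $\|\partial_\mu^2 f(t,x,\mathcal{L}_X)(X,X)-\partial_\mu^2f(t,x,\mathcal{L}_{\bar X})(\bar X,\bar X)\|_r$ by $\|X-\bar X\|_p^{p-2}$, interpolating boundedness against Lipschitzness of $\partial_\mu^2f$. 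When $p>3$, $p\wedge 3=3$ and we instead require $D^3\hat f_t$ Lipschitz along $\R^d\times L_p$, which is direct from the Lipschitz hypotheses on the third-order kernels together with a single application of H\"older's inequality to pass from $L_2$- to $L_p$-perturbations. The corresponding bound for $\hat f'_t\in C^{(p\wedge 3)-1}_{b,\R^d\times L_p}$ uses only the first-order measure and spatial derivatives in (g)--(h).

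\textbf{Part (2), time regularity and controlled structure.} Items (ii)--(iii) of \cref{def:stochasticcontrolledvectorfield} follow from the time-regularity parts of (a)--(h), which provide $\alpha$-H\"older continuity in $t$, uniform in the other arguments, of all derivatives of $f,f'$ appearing in the formulas for $D^k\hat f$ and $D^k\hat f'$ above; while (i)--(k) are precisely the $2\alpha$-controlled rough path conditions for $f$, $D_xf$ and $\partial_\mu f$ with Gubinelli derivatives $f'$, $D_xf'$ and $\partial_\mu f'$. Expanding $R^{\hat f}_{s,t}$ and $R^{D\hat f}_{s,t}$ via the chain rule for $D\hat f$ reduces everything to the pointwise remainders $R^f$, $R^{D_xf}$ and $R^{\partial_\mu f}$, so the $2\alpha$-bounds transfer directly. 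The main obstacle throughout is the bookkeeping of integrability exponents ($2$ versus $p$) in each slot of the chain rule, so that H\"older's inequality closes all estimates uniformly in $t$.
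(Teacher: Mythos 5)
There is a genuine gap, concentrated in the treatment of the case $p>3$ and, more subtly, in the existence of the second Fréchet derivative in the measure direction.

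First, you misread the decomposition underlying $C^{p\wedge 3}_{b,\mathcal{X}}$: the convention is $\kappa=N+\gamma$ with $N\in\mathbb{N}_{\ge 0}$ and $\gamma\in(0,1]$, so for $p>3$ one has $p\wedge 3=3=2+1$, i.e.\ $N=2$ and $\gamma=1$. This means the target space requires \emph{$D^2\hat f_t$ to be Lipschitz along $\R^d\times L_p$}, not that $D^3\hat f_t$ exist. Your claim that ``Assumptions (a)--(f) ensure all these [third-order] kernels are uniformly bounded'' is false: \cref{assumptions_existence} stop at second-order Lions/mixed derivatives ($D^2_xf$, $\partial_\mu D_xf$, $D_v\partial_\mu f$, $\partial^2_\mu f$, $D_x\partial_\mu f$), and no third-order differentiability is hypothesized, so a putative $D^3\hat f_t$ is simply not available. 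The correct and unified statement, consistent with \cref{rmk:counterexampleCD}, is that $D^2\hat f_t$ is $((p-2)\wedge 1)$-H\"older along $\R^d\times L_p$: when $p\in(2,3]$ one interpolates boundedness against Lipschitzness of the second-order kernels via H\"older with exponent $r$ satisfying $1/r+2/p=1$, and when $p>3$ the same H\"older inequality yields full Lipschitz continuity directly, without invoking any third derivative.

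Second, you write ``iterating produces explicit expressions for $D^2\hat f_t$'' as if twice Fréchet differentiability of the Lions lift in the measure slot were automatic. It is not: \cite[Remark 5.80]{CARMONADELARUE_volumeI} (cited in \cref{rmk:counterexampleCD}) exhibits a smooth $h$ whose lift $X\mapsto\E(h(X))$ fails to be twice continuously Fréchet differentiable on $L_2$. The paper's proof must (and does) go through \cite[Proposition 5.85]{CARMONADELARUE_volumeI} and \cite[Remark 5.86]{CARMONADELARUE_volumeI}, using the full $\mathcal{C}^2$-regularity package encoded in assumptions (c)--(f) to first establish Gateaux differentiability of $X\mapsto D_2\hat f_t(x,X)$ with a linear, continuous Gateaux differential, and only then upgrades this to twice Fréchet differentiability, with the explicit formula involving both $D_v\partial_\mu f$ and $\partial^2_\mu f$. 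This step is the substantive content of Part (2) and cannot be dispatched as a formal chain-rule iteration.

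The remainder of your outline (Part (1), the chain-rule identities for $D\hat f_t$, $D_1D_2\hat f_t$, $D_2D_1\hat f_t$, the transfer of the time-regularity and $2\alpha$-controlled conditions from (i)--(k) to $R^{\hat f}$ and $R^{D\hat f}$) agrees in substance with the paper's argument and is fine.
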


\begin{proof}
    The proof of \textit{1.}\ is straightforward from the assumptions on $b$ and $\sigma$ and recalling that $\mathcal{W}_2 \le \mathcal{W}_p$ for $p \in (2,\infty)$. 
    Let us focus on \textit{2.} 
    For any fixed $t \in [0,T]$, the functions $\hat{f}_t: \R^d \times L_2(\Omega;\R^d) \to \mathscr{L}(\R^n,\R^d)$ and $\hat{f}'_t: \R^d \times L_2(\Omega;\R^d) \to \mathscr{L}(\R^n \otimes \R^n,\R^d)$ are continuously Fréchet differentiable. 
    Indeed, we necessarily have \begin{equation*}
        D_1 \hat{f}_t (x,X) = D_x f(t, x,\mathcal{L}_X)
    \end{equation*}
    and \begin{equation*}
        D_2 \hat{f}_t (x,X) [Y] = \E (\partial_\mu f(t, x,\mathcal{L}_X)(X) \cdot Y) \qquad \text{for any $Y \in L_2(\Omega;\R^d)$}.  
    \end{equation*}
    By (a) in \cref{assumptions_existence}, the first map is Lipschitz continuous as a function $\R^d \times L_2(\Omega;\R^d) \to \mathscr{L}(\R^d,\mathscr{L}(\R^n,\R^d))$, while from (c) we get the Lipschitz continuity of $D_2 \hat{f}_t$ as a map $\R^d \times L_2(\Omega;\R^d) \to \mathscr{L}(L_2(\Omega;\R^d),\mathscr{L}(\R^n,\R^d))$.
    Note that Lipschitz continuity on $\R^d \times L_2(\Omega;\R^d)$ implies Lipschitz continuity along $\R^d \times L_p(\Omega;\R^d)$ for any $p \in (2,\infty)$. 
    Arguing in a very similar way, from assumptions (g) and (h) it is possible to deduce that $\hat{f}'_t$ is continuously Fréchet differentiable with \begin{align*}
        D_1 \hat{f}'_t (x,X) &= D_x f'(t, x,\mathcal{L}_X) \\
        D_2 \hat{f}'_t (x,X) [Y] &= \E (\partial_\mu f'(t, x,\mathcal{L}_X)(X) \cdot Y) \qquad \text{for any $Y \in L_2(\Omega;\R^d)$},
    \end{align*}
    with $\R^d \times L_2(\Omega;\R^d) \ni (x,X) \mapsto D\hat{f}'_t(x,X) \in \mathscr{L}(\R^d \times L_2(\Omega;\R^d),\mathscr{L}(\R^n,\R^d))$ Lipschitz continuous. 

    From (a) we also deduce that there exists $D^2_1 \hat{f}_t (x,X) = D_x^2 f(t, x,\mathcal{L}_X)$ and it is continuous as a function $\R^d \times L_2(\Omega;\R^d) \to \mathscr{L}^2(\R^d , \mathscr{L}(\R^n,\R^d)))$. By respectively applying (b) and (f), for any $(y,Y) \in \R^d \times L_2(\Omega;\R^d)$ we get that \begin{align*}
        D_2 D_1 \hat{f}_t (x,X) [Y,y] &= \E(\partial_\mu D_x f (t, x,\mathcal{L}_X) (X) \cdot (Y \otimes y)) \\
        D_1 D_2 \hat{f}_t (x,X) [y,Y] &= \E(D_x \partial_\mu f (t, x,\mathcal{L}_X) (X) \cdot (y \otimes Y))
    \end{align*}
    and the two previous maps are Lipschitz continuous as maps from $\R^d \times L_2(\Omega;\R^d)$ to $\mathscr{L}(L_2(\Omega;\R^d) \otimes \R^d,\mathscr{L}(\R^n,\R^d))$ and $\mathscr{L}(\R^d \otimes L_2(\Omega;\R^d),\mathscr{L}(\R^n,\R^d))$, respectively. 
    A posteriori, once twice continuous Fréchet differentiability along $\R^d \times L_p(\Omega;\R^d)$ is established, Schwarz's theorem ensures that the two mixed derivatives coincide. 

    Showing the twice continuous partial Fréchet differentiability of $\hat{f}_t$ in its second entry requires extra care (see, for instance, \cite[Remark 5.80]{CARMONADELARUE_volumeI} to understand why this may be an issue). Notice that, due to \cref{assumptions_existence} and for any $x \in \R^d$, the function $\mathcal{P}_2(\R^d) \ni \mu \mapsto f(t, x,\mu)$ is fully $\mathcal{C}^2$-regular in the sense of \cite[Section 5.6.2, p. 464]{CARMONADELARUE_volumeI}, and we have uniform global bounds on the second derivatives. Taking into account \cite[Proposition 5.85]{CARMONADELARUE_volumeI} we get that the function $L_2(\Omega;\R^d) \ni X \mapsto D_2 \hat{f}_t(x,X)$ is continuously Gateaux differentiable, and \cite[Remark 5.86]{CARMONADELARUE_volumeI} underlines how its Gateaux differential is linear in the direction variable. This is enough to conclude that $\hat{f}_t(x,\cdot)$ is twice Fréchet differentiable with \begin{multline*}
        D^2_2 \hat{f}_t(x,X) [Y_1,Y_2] = \E(D_v \partial_\mu f(t, x,\mathcal{L}_X) (X) \cdot [Y_1 \otimes Y_2]) + \\ + \E \left( \int_{\R^d \times (\R^d \otimes \R^d)} \partial^2_\mu f(t, x,\mathcal{L}_X) (X,x) \cdot y \, \mathcal{L}_{(X,Y_1 \otimes Y_2)} (dx,dy) \right) .
    \end{multline*}
    From (d) and (e), and by a standard application of H\"older's inequality, it is possible to show that, for any $p \in (2,\infty)$, the map \begin{equation*}
        \R^d \times L_2(\Omega;\R^d) \ni X \mapsto D^2_2 \hat{f}_t(x,X) \in \mathscr{L}^2(L_p(\Omega;\R^d), \mathscr{L}(\R^n,\R^d))
    \end{equation*}
    is $(p \wedge 3)$-H\"older continuous along $\R^d \times L_p(\Omega;\R^d)$ in the sense of \cref{def:HoldercontinuityalongsubBanachspaces}.
    Indeed, this essentially follows the same reasoning as the computations in \cref{rmk:counterexampleCD}, so we omit the details here. 

    So far, taking into account the global boundedness requirements in \cref{assumptions_existence}, we have proved that, for any $t \in [0,T]$ and for any $p \in (2,\infty)$, \begin{equation*}
        \hat{f}_t \in C^{p\wedge 3}_{b,\R^d \times L_p(\Omega;\R^d)}(\R^d \times L_2(\Omega;\R^d);\mathscr{L}(\R^n,\R^d)) 
    \end{equation*} and \begin{equation*}
        \hat{f}'_t \in C^2_{b,\R^d \times L_p(\Omega;\R^d)}(\R^d \times L_2(\Omega;\R^d);\mathscr{L}(\R^n \otimes \R^n,\R^d)). 
    \end{equation*}
    By construction, $\sup_{t \in [0,T]} (|\hat{f}_t|_{p \wedge 3;\R^d \times L_p(\Omega;\R^d)} + |\hat{f}'_t|_{2;\R^d \times L_p(\Omega;\R^d)}) < +\infty$. 
    Moreover, for any $g \in \{\delta \hat{f}, \delta D_x\hat{f}, \delta D^2_x\hat{f},\delta \hat{f}',\delta D_x \hat{f}'\}$, \begin{equation*}
        \sup_{0 \le s < t \le T} \frac{\sup_{(x,X) \in \R^d \times L_2(\Omega;\R^d)} |g_{s,t}(x,X)|}{|t-s|^\alpha} < +\infty ,
    \end{equation*}
    and from (i), (j) and (k) it is straighforward to deduce \begin{equation*}
        \sup_{0 \le s < t \le T} \frac{\sup_{(x,X) \in \R^d \times L_2(\Omega;\R^d)} |\delta\hat{f}_{s,t}(x,X) - \hat{f}'_s(x,X) \delta W_{s,t}|}{|t-s|^{2\alpha}} < +\infty 
    \end{equation*} and \begin{equation*}
        \sup_{0 \le s < t \le T} \frac{\sup_{(x,X) \in \R^d \times L_2(\Omega;\R^d)} |\delta D\hat{f}_{s,t}(x,X) - D\hat{f}'_s(x,X) \delta W_{s,t}|}{|t-s|^{2\alpha}} < +\infty .
    \end{equation*}
\end{proof}

Let $\mathbf{\Omega}=(\Omega,\mathcal{F},\{\mathcal{F}_t\}_{t \in [0,T]},\mathcal{P})$ be an atomless Polish probability space endowed with a filtration such that $\mathcal{F}_0$ contains all the $\mathbb{P}$-null sets, and let $B$ be an $m$-dimensional $\{\mathcal{F}_t\}_t$-Brownian motion on $\mathbf{\Omega}$. 
Let $\nu \in \mathcal{P}_2(\R^d)$ and let $\xi$ be any $\mathcal{F}_0$-measurable random variable on $\mathbf{\Omega}$ such that $\mathcal{L}_\xi = \nu$. Thanks to the atomlessness of the space, such a random variable can always be constructed.

\begin{thm} \label{thm:existence_meanfieldroughPDE}
   Let \cref{assumptions_existence} hold.
   For any $t \in [0,T]$ define $\mu_t := \mathcal{L}_{X_t}$, where $X$ is an $L_{4,\infty}$-integrable solution to the following McKean-Vlasov rough SDE on $\mathbf{\Omega}$, in the sense of \cref{def:solution_McKeanVlasovroughSDE}:
   \begin{equation} \label{eq:McKean-VlasovroughSDE_existence}
        \begin{cases}
            dX_t &= b(t, X_t,\mu_t) \, dt + \sigma(t, X_t,\mu_t) \, dB_t + (f,f')(t, X_t,\mu_t) \, d\mathbf{W}_t \qquad t \in [0,T] \\
            \mu_t &= \mathcal{L}_{X_t} \\
            X_0 &= \xi 
        \end{cases} .
    \end{equation}  
   Then $\mu$ is a solution to equation \eqref{eq:meanfieldroughPDEs} in the sense of \cref{def:solution_meanfieldroughPDE} starting from $\nu$.  
\end{thm}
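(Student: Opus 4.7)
The plan is to apply a rough It\^o (chain rule) formula to $\varphi(X_t)$ and then take expectation. First, I would invoke \cref{prop:liftofstochasticcontrolledvectorfields} with $p = 4$ to recast the coefficients as Lions lifts satisfying the hypotheses of \cref{thm:wellposedness_McKeanVlasovroughSDEs} (with $\gamma = 4 \wedge 3 = 3$, noting $3 > 1/\alpha$); this guarantees that an $L_{4,\infty}$-integrable solution $X$ to \eqref{eq:McKean-VlasovroughSDE_existence} exists, so that $\mu_t := \mathcal{L}_{X_t}$ is a well-defined curve in $\mathcal{P}_2(\R^d)$, which moreover is (weakly) continuous in $t$ as the image under a continuous map of the continuous $L_2$-process $X$.

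Next, for any $\varphi \in C^\gamma_b(\R^d;\R)$ with $\gamma \in (1/\alpha, 3]$, I would derive a Davie expansion for $\varphi(X_t)$ by plugging the Davie expansion of $X$ from \cref{rmk:FréchetderivativesinDavieexpansion} into a Taylor expansion of $\varphi$ up to second order. The first-order term $D\varphi(X_s) \delta X_{s,t}$ produces the drift $\int_s^t L_r[\mu_r]\varphi(X_r)\,dr$, the martingale $\int_s^t D\varphi(X_r)\sigma\,dB_r$, the controlled level $\Gamma_s[\mu_s]_\kappa \varphi(X_s)\,\delta W^\kappa_{s,t}$, plus the part of the $\mathbb{W}$-term coming from the Gubinelli derivative of $f$ itself. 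The second-order Taylor term $\tfrac12 D^2\varphi(X_s)(\delta X_{s,t})^{\otimes 2}$ contributes at order $\mathbb{W}$ too; here I would use the weakly geometric identity $\tfrac12 \delta W^\kappa \delta W^\lambda = Sym(\mathbb{W})^{\kappa\lambda}$ to rewrite it as $\partial_i\partial_j\varphi(X_s) f^i_\kappa f^j_\lambda\, \mathbb{W}^{\kappa\lambda}_{s,t}$. Summing the two sources of $\mathbb{W}^{\kappa\lambda}$-contribution and collecting spatial and measure derivatives reproduces exactly the operator $\Gamma_s[\mu_s]_\kappa \Gamma_s[\mu_s]_\lambda + \Gamma'_s[\mu_s]_{\lambda\kappa}$ (as can be checked component by component via $\Gamma_\lambda \Gamma_\kappa \varphi = \partial_j(\partial_i\varphi\, f^i_\kappa) f^j_\lambda$ and the definition of $\Gamma'$).

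Taking conditional expectation $\E(\cdot \mid \mathcal{F}_s)$ kills the Brownian integral (since $D\varphi\sigma$ is bounded) and, upon taking full expectation and using the remainder bounds in \cref{def:solution_McKeanVlasovroughSDE}(iii) together with the boundedness of $D\varphi, D^2\varphi$, leaves the identity
\begin{multline*}
\langle \mu_t, \varphi \rangle - \langle \mu_s, \varphi \rangle = \int_s^t \langle \mu_r, L_r[\mu_r]\varphi\rangle\,dr + \langle \mu_s, \Gamma_s[\mu_s]_\kappa \varphi\rangle\,\delta W^\kappa_{s,t} \\
+ \langle \mu_s, (\Gamma_s[\mu_s]_\kappa \Gamma_s[\mu_s]_\lambda + \Gamma'_s[\mu_s]_{\lambda\kappa})\varphi\rangle\,\mathbb{W}^{\kappa\lambda}_{s,t} + \mu^{\natural,\varphi}_{s,t},
\end{multline*}
with $|\mu^{\natural,\varphi}_{s,t}| \lesssim |t-s|^{3\alpha}$, where the implicit constant depends only on $|\mathbf{W}|_\alpha$, the coefficient bounds from \cref{assumptions_existence}, and $|\varphi|_{C^\gamma_b}$. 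This directly establishes condition (iii) of \cref{def:solution_meanfieldroughPDE}. Condition (i) is immediate from the boundedness of $b, a$ and of $D\varphi, D^2\varphi$. For condition (ii), I would apply the same strategy to the auxiliary process $t \mapsto \langle \mu_t, \Gamma_t[\mu_t]_\kappa \varphi\rangle$: its H\"older/controlled-rough behaviour follows from taking expectation of the Davie expansion of $\Gamma_t[\mu_t]_\kappa\varphi(X_t)$, whose first-order Gubinelli derivative is precisely $(\Gamma_t[\mu_t]_\kappa \Gamma_t[\mu_t]_\lambda + \Gamma'_t[\mu_t]_{\lambda\kappa})\varphi$.

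The main technical obstacle will be justifying the rough It\^o formula in the McKean--Vlasov setting with genuine measure dependence: the composition $\varphi(X)$ involves the Lions lift $\hat\varphi$ (trivial here since $\varphi$ is deterministic on $\R^d$), but the coefficient $f_\kappa(t, X_t, \mu_t)$ entering the Gubinelli structure requires tracking both spatial and Lions derivatives, with the $\cancel{X}$-copy mechanism from \cref{rmk:FréchetderivativesinDavieexpansion} producing the $\int \partial_\mu f^i_\kappa(x,\mu)(z) \cdot f_\lambda(z,\mu)\,\mu(dz)$ contribution to $\Gamma'_{\kappa\lambda}$. Controlling remainders in $L_p$ uniformly in $\varphi$ over bounded sets of $C^\gamma_b$ - so that the quantity $\mu^{\natural,\varphi}_{s,t}$ satisfies the required $o(|t-s|)$ bound uniformly - will rely on the $L_{4,\infty}$-integrability of $X$ together with the boundedness assumptions in \cref{assumptions_existence}, via the estimates already built into \cref{def:solution_McKeanVlasovroughSDE}.
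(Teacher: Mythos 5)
Your proposal follows essentially the same route as the paper: invoke \cref{prop:liftofstochasticcontrolledvectorfields} with $p=4$ to get well-posedness of the McKean--Vlasov rough SDE via \cref{thm:wellposedness_McKeanVlasovroughSDEs}, apply a rough It\^o formula to $\varphi(X_t)$, take expectations (killing the Brownian stochastic integral) to obtain condition (iii), and deduce condition (ii) from the controlled-rough-path structure of $Y^\varphi_t = D\varphi(X_t)[f(t,X_t,\mu_t)]$. The only presentational difference is that the paper cites the rough It\^o formula of Friz--Hocquet--L\^e (\cite[Theorem 4.13]{FHL21}) directly rather than re-deriving it from the Davie expansion and a second-order Taylor expansion (as you propose, with the weakly geometric identity absorbing the symmetric part of $\delta W\otimes\delta W$ into $\mathbb{W}$); the two routes give the same identity and your sketch is otherwise sound.
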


\begin{proof}
    In view of \cref{prop:liftofstochasticcontrolledvectorfields}, take $p=4$ and consider the (unique) $L_{4,\infty}$-integrable solution $X$ to the rough SDE \eqref{eq:McKean-VlasovroughSDE_existence} on $\mathbf{\Omega}$, which is well-posed by \cref{thm:wellposedness_McKeanVlasovroughSDEs}.    
    Recall that $p \wedge 3 = 3 > \frac{1}{\alpha}$ by assumption. 
     For definiteness, we are choosing $(\Omega',\mathcal{F}', \{\mathcal{F}'_t\}_t, \mathbb{P}') = (\Omega, \mathcal{F},\{\mathcal{F}_t\}_t, \mathbb{P})$.
    Consider the Borel measurable \footnote{Being $X$ $\mathbb{P}$-almost surely continuous, the map $[0,T] \ni t \mapsto \langle \mu_t , \varphi \rangle \in \R$ is Borel measurable (in fact, continuous) for any $\varphi \in C^0_b(\R^d;\R)$. It is therefore possible to deduce that, for any $A \in \mathcal{B}(\R^d)$, the function $[0,T] \ni t \mapsto \mu_t(A) \in \R$ is Borel measurable. By \cite[Proposition 5.7]{CARMONADELARUE_volumeI}, that is equivalent to say that $[0,T] \ni t \mapsto \mu_t \in \mathcal{P}_2(\R^d)$ is Borel measurable.} map \begin{equation} \label{eq:definitionofmu}
        [0,T] \ni t \mapsto  \mu_t := \mathcal{L}_{X_t} \in \mathcal{P}_2(\R^d).
    \end{equation} 
    Notice that $\mu_0 = \nu$, and let $\gamma \in (\frac{1}{\alpha},3]$ and $\varphi \in C_b^\gamma(\R^d;\R)$ be arbitrary. We now show that $\mu$ satisfies all the requirements of \cref{def:solution_meanfieldroughPDE}.
    By assumptions, the map \begin{equation*}
        \begin{split}
            [0,T] \ni t &\mapsto \langle \mu_t , L_t[\mu_t] \varphi \rangle = \E \left( \frac{1}{2} \partial^2_{ij} \varphi(X_t) a^{ij}(t,X_t,\mu_t) + \partial_i \varphi(X_t) b^i(t,X_t,\mu_t) \right)
        \end{split}
    \end{equation*}
    is measurable and uniformly bounded. 
    Note that $X$ is a rough It\^o process in the sense of \cite[Section 4.4.2]{FHL21}, and recall that $$(\hat f(X, \cancel{X}), D_1\hat{f}(X,\cancel{X}) [\hat{f}(X,\cancel{X})]  +D_2\hat{f}(X,\cancel{X}) [\cancel{\hat{f}(X,\cancel{X})}] + \hat f' (X, \cancel{X})) \in \mathbf{D}_W^{2\alpha} L_{4,\infty}([0,T], \mathbf{\Omega}; \mathscr{L}(\R^n,\R^d))$$ and $\hat f$ is globally bounded by assumption.  By means of the rough It\^o formula (cf.\ \cite[Theorem 4.13]{FHL21} with $\gamma = 3$, $\beta = \beta' = \alpha$ and $n=\infty$) and taking into account \cref{rmk:FréchetderivativesinDavieexpansion}, for any $0 \le s \le t \le T$ we have  \begin{equation}
        \begin{aligned} \label{eq:Itoformula_existence}
        \varphi(X_t) - \varphi(X_s) &= \int_s^t \partial_i \varphi(X_r) b^i(r,X_r,\mu_r) + \frac{1}{2} \partial^2_{ij} \varphi(X_r) a^{ij}(r,X_r,\mu_r) \, dr + \\
        &\quad + \int_s^t \partial_i \varphi(X_r) \sigma^i_l(r,X_r,\mu_r) \, dB^l_r  + \partial_i \varphi(X_s) f^i_\kappa(s,X_s,\mu_s) \delta W^\kappa_{s,t} +  \\
        & \quad + \Big( \partial^2_{ij} \varphi(X_s) f^i_\lambda(s,X_s,\mu_s) f^j_\kappa(s,X_s,\mu_s) +  \partial_i \varphi(X_s) \partial_{x^j} f^i_\lambda(t,y,\nu) f^j_\kappa(t,y,\nu) + \\ 
         & \quad + \int_{\R^d} \partial_\mu f^i_\lambda(t,y,\nu)(z) \cdot f_\kappa(t,z,\nu) \, \nu(dz) + \partial_i \varphi(y)(f_{\lambda \kappa}')^i(t,y,\nu) \Big) \mathbb{W}^{\kappa\lambda}_{s,t} + \varphi(X)_{s,t}^\natural ,
        \end{aligned}
    \end{equation}  
    where $\|\E(\varphi(X)_{s,t}^\natural \mid \mathcal{F}_s)\|_\infty \lesssim |t-s|^{3\alpha}$ with uniform implicit constant over bounded sets of $C^\gamma_b(\R^d;\R)$. 
    By taking the expectation in \eqref{eq:Itoformula_existence} with respect to $\mathbb{P}$ and by the martingale property of It\^o's integrals, we conclude that  \begin{equation*}
        \begin{aligned}
            &\langle \mu_t , \varphi \rangle - \langle \mu_s , \varphi \rangle = \E(\varphi(X_t) - \varphi(X_s)) = \\
            &= \int_s^t \langle \mu_r, L_r[\mu_r] \varphi \rangle \, dr + \langle \mu_s, \Gamma_s[\mu_s]_\kappa \rangle \delta W^\kappa_{s,t}  + \langle \mu_s, (\Gamma_s[\mu_s]_\kappa \Gamma_s[\mu_s]_\lambda + \Gamma'_s[\mu_s]_{\lambda \kappa} ) \varphi  \rangle \mathbb{W}^{\kappa\lambda}_{s,t} + \E(\varphi(X)^\natural_{s,t})
        \end{aligned}
    \end{equation*}
    with $|\E(\varphi(X)^\natural_{s,t})| \le \|\E(\varphi(X)^\natural_{s,t} \mid \mathcal{F}_s)\|_\infty = |t-s|^{3\alpha}$. 
    We are left to prove that condition (ii) of \cref{def:solution_meanfieldroughPDE} is satisfied. 
    Again from \cite[Theorem 4.13]{FHL21}, we also have that    
    \begin{equation*}
        (Y^\varphi,(Y^\varphi)'):=( D\varphi(X) [f(X,\mu)] , D^2 \varphi(X)[f(X,\mu), f(X,\mu)] + D\varphi(X) [\Phi(X,\mu)] ) 
    \end{equation*}
    belongs to $\mathbf{D}_W^{2\alpha}L_{2} ([0,T],\mathbf{\Omega};\R^n)$, where  \begin{multline*}
        \Phi_{\kappa\lambda} (t, X_t, \mu_t) := \partial_j f_\lambda(t,X_t, \mu_t) f^j_\kappa(t,X_t, \mu_t) + \\ + \int_{\R^d} \partial_\mu f_\lambda(t,X_t, \mu_t)(z) \cdot f_\kappa(t,z,\mu_t) \, \mu_t(dz) + f_{\lambda \kappa}'(t,X_t, \mu_t) 
    \end{multline*}
    for any $\kappa,\lambda=1,\dots,n$ and $\|\delta Y^\varphi\|_{\alpha;2} + \sup_{t \in [0,T]}\|(Y^\varphi)_t'\|_2 + \|\delta (Y^\varphi)'\|_{\alpha;2} + \|\E_\cdot R^Y\|_2 < +\infty$ uniformly over bounded sets of $\varphi \in C^\gamma_b(\R^d;\R)$.  
    Hence it follows that \begin{multline*}
        | \langle  \mu_t, (\Gamma_t[\mu_t]_\kappa \Gamma_t[\mu_t]_\lambda + \Gamma'_t[\mu_t]_{\kappa\lambda} ) \varphi  \rangle - \langle \mu_s, (\Gamma_s[\mu_s]_\kappa \Gamma_s[\mu_s]_\lambda + \Gamma'_s[\mu_s]_{\kappa\lambda} ) \varphi  \rangle | = \\ 
        \le |\E ((Y^\varphi)'_t - (Y^\varphi)'_s)| \le \|\delta (Y^\varphi)'\|_{\alpha;2} |t-s|^\alpha
    \end{multline*} and \begin{multline*}
        |\delta  \langle \mu_\cdot, \Gamma_\cdot[\mu_\cdot]_\kappa \varphi \rangle_{s,t} - \langle \mu_s, (\Gamma_s[\mu_s]_\kappa \Gamma_s[\mu_s]_\lambda + \Gamma'_s[\mu_s]_{\kappa\lambda} ) \varphi  \rangle \delta W^\lambda_{s,t} | = \\
        = |\E( (Y^\varphi_t)^\kappa - (Y^\varphi_s)^\kappa - \sum_{\lambda=1}^n ((Y^\varphi)'_s)^{\kappa \lambda} \delta W^\lambda_{s,t} )| \le \|\E( R^{Y^\varphi}_{s,t} \mid \mathcal{F}_s  )\|_2 \lesssim |t-s|^{2\alpha}. 
    \end{multline*}
\end{proof}


\begin{rmk}
    In \cref{thm:existence_meanfieldroughPDE}, we have chosen an $L_{p,\infty}$-integrable solution $X$ to \eqref{eq:McKean-VlasovroughSDE_existence} with $p=4$ purely by convention; as the proof shows, this choice aligns naturally with the rough Itô formula presented in \cite[Theorem 4.13]{FHL21}. 
    Let $\bar p \in (\frac{1}{\alpha},\infty)$ and let $\bar X$ be an $L_{\bar p,\infty}$-integrable solution to \eqref{eq:McKean-VlasovroughSDE_existence} starting from $\xi$. 
    Notice that both $X$ and $\bar X$ are $L_{\bar p \wedge 4,\infty}$-integrable solutions to \eqref{eq:McKean-VlasovroughSDE_existence}.
    By the stability estimates for $L_{\bar p \wedge 4}$-integrable solutions in \cite[Theorem 3.11]{FHL25},  we can conclude that, uniformly in $t \in [0,T]$, \begin{equation*}
        \mathcal{W}_2(\mathcal{L}_{X_t}, \mathcal{L}_{\bar X_t}) \le \mathcal{W}_{\bar p \wedge 4}(\mathcal{L}_{X_t}, \mathcal{L}_{\bar X_t}) \le \big\| \sup_{t \in [0,T]} |X_t - \bar X_t| \big\|_{\bar p \wedge 4} \le 0. 
    \end{equation*}
    Hence, $\mu=(\mu_t)_{t \in [0,T]}$ defined as in \cref{thm:existence_meanfieldroughPDE} describes the curve of one-dimensional marginals of any $L_{p,\infty}$-integrable solution to \eqref{eq:McKean-VlasovroughSDE_existence} starting from $\xi$ with $p \in (\frac{1}{\alpha}, \infty)$. 
\end{rmk}

\subsection{Uniqueness} \label{section:uniqueness} 
The proof of uniqueness for \eqref{eq:meanfieldroughPDEs} is essentially based on uniqueness results for linear rough PDEs (cf.\ \cref{prop:duality} below). 
It is therefore necessary to assume extra regularity for the coefficients of \eqref{eq:meanfieldroughPDEs} in the state variable $x$. 

\begin{assumptions} \label{assumptions_uniqueness}
    Let $b:[0,T] \times \R^d \times \mathcal{P}_2(\R^d) \to \R^d$, $\sigma: [0,T] \times \R^d \times \mathcal{P}_2(\R^d) \to \mathscr{L}(\R^m,\R^d)$, $f:[0,T] \times \R^d \times \mathcal{P}_2(\R^d) \to \mathscr{L}(\R^n,\R^d)$ and $f':[0,T] \times \R^d \times \mathcal{P}_2(\R^d) \to \mathscr{L}(\R^n \otimes \R^n ,\R^d)$ be Borel measurable. Suppose that
    \begin{enumerate} \renewcommand{\labelenumi}{(\alph{enumi})}

        \item for any $(t,\mu) \in [0, T] \times \mathcal{P}_2 (\mathbb{R}^d)$ and for any $g \in \{b,\sigma\}$, the map $x \mapsto g (t, x, \mu)$ is 2-times differentiable in classical sense. Moreover, the functions \[ (t, x, \mu) \mapsto (g (t, x, \mu), D_x g (t, x, \mu), D^2_x g (t, x, \mu)) \] are globally bounded, Lipschitz continuous in $(x, \mu)$ uniformly in $t$ and continuous in $t$ uniformly in $(x, \mu)$ ;

    
        \item for any $(t,\mu) \in [0, T] \times \mathcal{P}_2 (\mathbb{R}^d)$, the map $x \mapsto f (t, x, \mu)$ is 4-times differentiable in classical sense. Moreover, the functions \[ (t, x, \mu) \mapsto (f (t, x, \mu), D_x f (t, x, \mu), D^2_x f (t, x, \mu), D^3_x f (t, x, \mu), D^4_x f (t, x, \mu)) \] are globally bounded, Lipschitz continuous in $(x, \mu)$ uniformly in $t$ and $\alpha$-H\"older continuous in $t$ uniformly in $(x, \mu)$ ;
  
        \item for any $(t,\mu) \in [0, T] \times \mathcal{P}_2 (\mathbb{R}^d)$, the map $x \mapsto f' (t, x, \mu)$ is 3-times differentiable in classical sense. Moreover, the functions \[ (t, x, \mu) \mapsto (f' (t, x, \mu), D_x f' (t, x, \mu), D^2_x f' (t, x, \mu), D^3_x f' (t, x, \mu)) \] are globally bounded, Lipschitz continuous in $(x, \mu)$ uniformly in $t$ and $\alpha$-H\"older continuous in $t$ uniformly in $(x, \mu)$ ;
     
        \item for any $(t,\mu) \in [0, T] \times \mathcal{P}_2 (\mathbb{R}^d)$ and for any $v \in \mathbb{R}^d$, the map $x \mapsto \partial_{\mu} f (t, x, \mu) (v)$ is 3-times differentiable in classical sense. Moreover, the functions \[ (t, x, \mu, v) \mapsto (\partial_{\mu} f (t, x, \mu) (v), D_x \partial_{\mu} f (t, x, \mu) (v), \ldots, D^3_x \partial_{\mu} f (t, x, \mu) (v)) \] are globally bounded, Lipschitz continuous in $(x, \mu, v)$ uniformly in $t$ and $\alpha$-H\"older continuous in $t$ uniformly in $(x, \mu, v)$ ;
  
        \item \[ \sup_{\mu \in \mathcal{P}_2 (\mathbb{R}^d)} \sup_{s < t} \frac{| f (t, \cdot, \mu) - f (s, \cdot, \mu) - f' (s, \cdot, \mu) \delta W_{s, t} |_{C^3_b}}{| t - s |^{2 \alpha}} < + \infty . \]
        
\end{enumerate}
\end{assumptions}

\begin{thm} \label{thm:uniqueness_meanfieldroughPDEs}
    Let $\nu \in \mathcal{P}_2(\R^d)$. Let \cref{assumptions_existence} hold and let $b,\sigma$ as in \cref{assumptions_uniqueness}. 
    Then there is at most one continuous solution $\mu :[0,T] \to \mathcal{P}_2(\R^d)$ to \eqref{eq:meanfieldroughPDEs} starting from $\nu$ and satisfying the following two conditions: 
    \begin{enumerate}
        \item for any $R>0$ there is a constant $C_R >0$ such that $|\langle \mu_t - \mu_s, \phi\rangle| \le C_R |t-s|^\alpha$ for any $\phi \in C^1(\R^d;\R)$ with $|\phi|_{Lip} \le R$; 
        \item 
        $(f^\mu,(f^\mu)') \in \mathscr{D}_W^{2\alpha} C_b^5(\R^d;\mathscr{L}(\R^n,\R^d))$,
        where 
        \begin{align*}
            f^\mu_t(x) &:= f(t, x,\mu_t) \\
            (f^\mu)'_t(x) &:= \int_{\R^d} \partial_\mu f (t, x,\mu_t) (z) \cdot f(t, z,\mu_t) \, \mu_t(dz) + f'(t, x,\mu_t).
        \end{align*}
    \end{enumerate} 
\end{thm}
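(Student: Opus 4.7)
The plan is to linearise \eqref{eq:meanfieldroughPDEs} around any candidate solution and thereby reduce uniqueness to two ingredients that are already in place: the uniqueness for the measure-independent linear rough PDE from \cref{section:linearroughPDEs} (which is proved by a duality argument against the dual backward equation of \cite{BFS24}) and the well-posedness of the McKean--Vlasov rough SDE \eqref{eq:McKean-VlasovroughSDE_existence} from \cref{thm:wellposedness_McKeanVlasovroughSDEs}.

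First I freeze a given solution $\mu$ into the coefficients. Set $b^\mu_t(x) := b(t,x,\mu_t)$, $a^\mu_t(x) := a(t,x,\mu_t)$, and let $(f^\mu,(f^\mu)')$ be as prescribed by the hypothesis. \cref{assumptions_uniqueness} on $(b,\sigma)$ together with condition~(1) (which yields $\alpha$-H\"older continuity of $t \mapsto \mu_t$ in $\mathcal{W}_1$, and hence of $t\mapsto (b^\mu_t,a^\mu_t)$ after composition with a bounded Lipschitz factor) ensures that $b^\mu$ and $a^\mu$ are globally bounded, $C^2_b$ in $x$ uniformly in $t$, and continuous in $t$. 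Condition~(2) is precisely the statement that $(f^\mu,(f^\mu)')$ is an admissible controlled vector field in $\mathscr{D}_W^{2\alpha}C_b^5$, of exactly the type required to run the linear rough PDE theory of \cref{section:linearroughPDEs}. Observing that substituting $\rho_t = \mu_t$ into the frozen equation reproduces \eqref{eq:meanfieldroughPDEs} term-by-term, $\mu$ is itself a solution to the linear rough PDE with coefficients $b^\mu,a^\mu,(f^\mu,(f^\mu)')$ starting from $\nu$, in the sense of \cref{def:solution_meanfieldroughPDE}.

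Next I invoke the uniqueness statement for this linear rough PDE. By the duality result of \cref{section:linearroughPDEs}, its unique solution in the relevant class of weakly-continuous measure-valued paths is realised as the curve of one-dimensional marginals $t \mapsto \mathcal{L}_{Y_t}$ of the linear (non-McKean--Vlasov) rough SDE
\begin{equation*}
    dY_t = b^\mu_t(Y_t)\,dt + \sigma^\mu_t(Y_t)\,dB_t + (f^\mu,(f^\mu)')_t(Y_t)\,d\mathbf{W}_t, \qquad Y_0 = \xi,
\end{equation*}
with $\xi$ any $\mathcal{F}_0$-measurable variable of law $\nu$; that this marginal flow does solve the linear rough PDE follows from the rough It\^o formula, exactly as in the proof of \cref{thm:existence_meanfieldroughPDE}. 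Uniqueness in the linear theory therefore forces $\mu_t = \mathcal{L}_{Y_t}$ for every $t \in [0,T]$.

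Finally, because $\mathcal{L}_{Y_t} = \mu_t$, the frozen coefficients at $\mu_t$ coincide with the nonlinear ones evaluated at $\mathcal{L}_{Y_t}$, and so $Y$ is an $L_{4,\infty}$-integrable solution to the McKean--Vlasov rough SDE \eqref{eq:McKean-VlasovroughSDE_existence}. By \cref{thm:wellposedness_McKeanVlasovroughSDEs} (applicable through \cref{prop:liftofstochasticcontrolledvectorfields}) such a solution is unique, and therefore $\mu_t = \mathcal{L}_{Y_t}$ is uniquely determined by $\nu$. The main technical obstacle is in the linearisation step: one must verify that the admissibility class produced by any $\mu$ satisfying (1) and~(2) is the exact class in which the duality argument of \cref{section:linearroughPDEs} delivers uniqueness. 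Concretely, the $C^5_b$-regularity in~(2) is what makes the dual backward rough PDE (used as a smooth test function against $\mu$) solvable with enough smoothness to pass the sewing estimates, while condition~(1) provides the temporal regularity required to pair $\mu_t$ against such test functions and to close the duality identity.
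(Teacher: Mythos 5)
Your proposal is correct and follows essentially the same route as the paper's own proof: freeze $\mu$ into the coefficients, observe $\mu$ solves the resulting linear rough PDE, identify $\mu_t$ with the marginal law of the associated linear rough SDE via the duality uniqueness of \cref{prop:duality}, and then recognise that this linear SDE solution is in fact a solution to the McKean--Vlasov rough SDE \eqref{eq:McKean-VlasovroughSDE_existence}, so \cref{thm:wellposedness_McKeanVlasovroughSDEs} closes the argument. The only detail worth spelling out that you pass over implicitly is that one also needs the comparison flow $t\mapsto\mathcal{L}_{Y_t}$ to satisfy the Wasserstein-type condition \eqref{eq:Wassersteincondition} before \cref{prop:duality} applies; this is supplied by the controlled-path structure of $Y$ (cf.\ \cref{prop:stabilityundercomposition_uniqueness}(i)--(ii)), exactly as the paper uses.
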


\begin{proof}
    Let $\nu \in \mathcal{P}_2(\R^d)$ and let $\mu:[0,T] \to \mathcal{P}_2(\R^d)$ be a continuous solution to \eqref{eq:meanfieldroughPDEs} starting from $\nu$ and satisfying \textit{1.}\ and \textit{2}. 
    For any $(t,x) \in [0,T] \times \R^d$, define $b^\mu_t(x) := b(t, x,\mu_t)$. It is straightforward to see that, under \cref{assumptions_uniqueness} and by continuity of $\mu$, $b^\mu$ is a (deterministic) bounded Lipschitz vector field from $\R^d$ to $\R^d$ in the sense of \cite[Definition 4.1]{FHL21} and that $[t \mapsto b^\mu(t,\cdot)] \in C^0([0,T]; C^2_b(\R^d;\R^d)] \cap \mathrm{Bd}([0,T];C^3_b(\R^d;\R^d)$. 
    Indeed, note that, for any $(t,\mu)$, $x \mapsto b^\mu_t(x)$ is 2-times continuously differentiable and $x \mapsto D^2 b^\mu_t(x)$ is Lipschitz continuous. Moreover, \begin{equation*}
        |b^\mu_t(x) - b^\mu_s(x)| \le \sup_{(x,\mu) \in \R^d \times \mathcal{P}_2(\R^d)} |b(t,x,\mu) - b(s,x,\mu)| + |b(s,x,\cdot)|_{Lip} \mathcal{W}_2(\mu_t, \mu_s) \to 0
    \end{equation*}
    as $|t-s| \to 0$, and the same holds replacing $b$ by $Db$ or $D^2b$. 
    Similar conditions also hold for $\sigma^\mu_t(x) := \sigma_t(x,\mu_t)$.
    Let $\mathbf{\Omega}=(\Omega,\mathcal{F},\{\mathcal{F}_t\}_{t \in [0,T]},\mathbb{P})$ be any atomless Polish probability space endowed with a filtration such that $\mathcal{F}_0$ contains all the $\mathbb{P}$-null sets, and let $B$ be an $m$-dimensional $\{\mathcal{F}_t\}_t$-Brownian motion on $\mathbf{\Omega}$. 
    Let $\xi$ be any $\mathcal{F}_0$-measurable random variable on $\mathbf{\Omega}$ such that $\mathcal{L}_\xi = \nu$.
    Denote by $X^\mu$ be the unique $L_{4,\infty}$-integrable solution to the following rough SDE on $\mathbf{\Omega}$ : \begin{equation} \label{eq:roughSDE_uniqueness}
        X^\mu_t = \xi + \int_0^t b_r^\mu(X_r^\mu) \, dr + \int_0^t \sigma^\mu_r(X_r^\mu) \, dB_r + \int_0^t (f^\mu_r, (f^\mu)_r')(X_r^\mu) \, d\mathbf{W}_r , \quad t \in [0,T] .
    \end{equation}
    Note that, because of condition \textit{2.}, equation  \eqref{eq:roughSDE_uniqueness} is well-posed by \cite[Theorem 4.7]{FHL21} as a standard rough SDE. 
    Necessarily, the curve $[0,T] \ni t \mapsto \mathcal{L}_{X^\mu_t} \in \mathcal{P}_2(\R^d)$ solves the following linear rough PDE \begin{equation*}
        d\rho_t = \left( \frac{1}{2} \partial^2_{ij} ((a^\mu)^{ij}(t,\cdot) \rho_t) - \partial_i ((b^\mu)^i(t,\cdot)\rho_t) \right) \, dt - \partial_i ((f^\mu)^i_\kappa(t,\cdot)\rho_t) d\mathbf{W}_t^\kappa
    \end{equation*}
    with initial condition $\nu$ and $a^\mu(t,x) := \sigma(t,x,\mu_t) \sigma(t,x,\mu_t)^T$, in the sense of \cref{def:linearroughPDE_solution} below. The proof of this fact is analogous to the one of \cref{thm:existence_meanfieldroughPDE}, considering measure-independent coefficients. 
    By construction, $\mu$ solves the same linear rough PDE. Hence, by \cref{prop:duality} below and condition \textit{1.}, we can conclude that \begin{equation*}
        \mu_t = \mathcal{L}_{X^\mu_t} \qquad \text{for any $t \in [0,T]$} .
    \end{equation*}
    The assertion is proved if we show that $X^\mu$ is in fact the unique solution to a McKean--Vlasov rough SDE of the form \eqref{eq:solution_McKeanVlasovroughSDE}. 
    Let $\hat{b}:[0,T] \times \R^d \times L_2(\Omega;\R^d) \to \R^d$ be the Lions lift of $b$ with respect to $(\Omega,\mathcal{F};\mathbb{P})$. Namely, $\hat b_t(x,X) := b(t, x,\mathcal{L}_X)$ for any $X \in L_2(\Omega;\R^d)$. 
    In a very similar way we define $\hat{\sigma}$, $\hat{f}$ and $\hat{f}'$.     
    Notice that, being $X^\mu$ an $L_{4,\infty}$-integrable solution to \eqref{eq:roughSDE_uniqueness}, the following belongs to $\mathbf{D}_W^{2\alpha} L_{4,\infty}([0,T] , \mathbf{\Omega} ; \mathscr{L}(\R^n,\R^d))$:  
        \begin{multline*}
        (f^\mu(X^\mu), D_x f^\mu(X^\mu) f^\mu(X^\mu) + (f^\mu)'(X^\mu)) 
        = \\ 
        = \big( \hat{f}(X^\mu, \cancel{X^\mu}) , D \hat{f} (X^\mu, \cancel{X^\mu}) [f(X^\mu, \cancel{X^\mu}), \cancel{f(X^\mu, \cancel{X^\mu})}] + \hat{f}'(X^\mu, \cancel{X^\mu}) \big) .
    \end{multline*}
    In the previous identity we made the natural choice $(\Omega',\mathcal{F}',\mathbb{P}) = (\Omega, \mathcal{F}, \mathbb{P})$. 
    Moreover, for any $0 \le s \le t \le T$, it holds \begin{equation*}
        \begin{split}
            &\delta X^\mu _{s,t} = \int_s^t b_r(X^\mu_r , \mathcal{L}_{X^\mu_r}) \, dr + \int_s^t \sigma_r(X^\mu_r , \mathcal{L}_{X^\mu_r}) \, dB_r +  f_s(X^\mu_s,\mathcal{L}_{X^\mu_s}) \delta W_{s,t} + \\
            & \quad  + \Big(D_x f_s(X^\mu_s,\mathcal{L}_{X^\mu_s}) f_s(X^\mu_s,\mathcal{L}_{X^\mu_s})  + \\
            & \quad + \int_{\R^d} \partial_\mu f_s(X^\mu_s,\mathcal{L}_{X^\mu_s})(z) \cdot f_s(z,\mathcal{L}_{X^\mu_s}) \, \mathcal{L}_{X^\mu_s} (dz) + f_s'(X^\mu_s,\mathcal{L}_{X^\mu_s}) \Big) \mathbb{W}_{s,t} + X^{\mu,\natural}_{s,t} = \\
            &= \int_s^t \hat{b}_r(X^\mu_r , \cancel{X^\mu_r}) \, dr + \int_s^t \hat{\sigma}_r(X^\mu_r , \cancel{X^\mu_r}) \, dB_r +  \hat{f}_s(X^\mu_s,\cancel{X^\mu_s}) \delta W_{s,t} + \\
            & \quad + D\hat{f}_s(X^\mu_s,\cancel{X^\mu_s}) [\hat{f}_s(X^\mu_s,\cancel{X^\mu_s}), \cancel{\hat{f}_s(X^\mu_s,\cancel{X^\mu_s})}] \mathbb{W}_{s,t} + X^{\mu,\natural}_{s,t} ,
        \end{split}
    \end{equation*}
    where $\|\E(X^{\mu,\natural}_{s,t} \mid \mathcal{F}_s) \|_\infty \lesssim |t-s|^{3\alpha}$. Hence, we proved that $X^\mu$ is the unique $L_{4,\infty}$-integrable solution to \begin{equation} \label{eq:McKean-VlasovroughSDE_uniqueness}
    \begin{cases}
        dX_t &= b_t(X_t,\rho_t) \, dt + \sigma_t(X_t,\rho_t) \, dB_t + (f_t,f'_t)(X_t,\rho_t) \, d\mathbf{W}_t \\
        \rho_t &= \mathcal{L}_{X_t}
    \end{cases}
    \end{equation} 
    starting at $\xi$.
    Such a McKean--Vlasov rough SDE is well-posed under \cref{assumptions_existence}, as we pointed out in the proof of \cref{thm:existence_meanfieldroughPDE}. Hence, uniqueness of $\mu$ follows from uniqueness of solutions to \eqref{eq:McKean-VlasovroughSDE_uniqueness}.     
\end{proof}  

As is typical in PDE theory, uniqueness of solutions to \eqref{eq:meanfieldroughPDEs} is established within a specific class of solutions. Namely, this refers to continuous measure-valued paths that satisfy condition \textit{1.}\ and \textit{2}.
The following result shows that the curve $\mu_t$ defined as in \eqref{eq:definitionofmu} meets these two criteria.  
Indeed, it is sufficient to replace the process $Z^\mu$ by $X$ below, where $X$ is a $L_{4,\infty}$-integrable solution to the McKean--Vlasov rough SDE \eqref{eq:McKean-VlasovroughSDE_existence}.

\begin{prop} \label{prop:stabilityundercomposition_uniqueness}
    Let $f$ and $f'$ as in \cref{assumptions_uniqueness}. 
    Let $\mu:[0,T] \to \mathcal{P}_2(\R^d)$ be such that there is a filtered probability space $\mathbf{\Omega} = (\Omega,\mathcal{F},\{\mathcal{F}_t\}_{t \in [0,T]},\mathbb{P})$ and an $\R^d$-valued stochastic process $Z^\mu=(Z^\mu_t)_{t \in [0,T]}$ on $\mathbf{\Omega}$ with $\mu_t = \mathcal{L}_{X_t}$ for any $t \in [0,T]$ and $(Z^\mu,f(Z^\mu,\mu)) \in \mathbf{D}_W^{2\alpha}([0,T],\mathbf{\Omega};\R^d)$.
    Then the following hold: 
    \begin{enumerate} \renewcommand{\labelenumi}{(\roman{enumi})}
        \item $\mathcal{W}_2(\mu_t, \mu_s) \overset{\alpha}{=} 0$ ; 
        \item for any $R >0$ there exists $C_R >0$ such that $|\langle \mu_t - \mu_s, \phi \rangle| \le C_R |t-s|^\alpha$ for any $\phi \in C^1(\R^d; \R)$ with $|\phi|_{Lip} \le R$,  
        \item For any $(t,x) \in [0,T] \times \R^d$, define \begin{align*}
        g_t(x) &:= f(t, x,\mu_t) \\
        g'_t(x) &:= \int_{\R^d} \partial_\mu f (t, x,\mu_t) (z) \cdot f(t, z,\mu_t) \, \mu_t(dz) + f'(t, x,\mu_t) .
        \end{align*}
        Then $(g,g') \in \mathscr{D}_W^{2\alpha} C_b^5(\R^d;\mathscr{L}(\R^n,\R^d))$. 
    \end{enumerate}
\end{prop}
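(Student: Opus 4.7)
\emph{Part (i)--(ii).} Since $(Z^\mu, f(Z^\mu,\mu)) \in \mathbf{D}_W^{2\alpha}$, the stochastic controlled rough path condition gives $\|\delta Z^\mu_{s,t}\|_p \lesssim |t-s|^\alpha$ for an appropriate $p \ge 2$. Coupling $(Z^\mu_s, Z^\mu_t)$ yields $\mathcal{W}_2(\mu_t, \mu_s) \le \|\delta Z^\mu_{s,t}\|_2 \lesssim |t-s|^\alpha$, giving (i). For (ii), since $\mu_r = \mathcal{L}_{Z^\mu_r}$,
\[
|\langle \mu_t - \mu_s, \phi \rangle| = |\mathbb{E}[\phi(Z^\mu_t) - \phi(Z^\mu_s)]| \le |\phi|_{Lip}\,\mathbb{E}|\delta Z^\mu_{s,t}| \le R\,\|\delta Z^\mu_{s,t}\|_2 \lesssim R\,|t-s|^\alpha.
\]

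\emph{Part (iii), spatial regularity.} I would show first that $g_t \in C^5_b(\mathbb{R}^d;\mathscr{L}(\mathbb{R}^n,\mathbb{R}^d))$ and $g'_t \in C^4_b$, uniformly in $t$, directly from \cref{assumptions_uniqueness}: the four-fold $x$-differentiability with Lipschitz fourth derivative from (b) handles $g$; for $g'$, (c) gives the $f'$ contribution and (d) gives the $\int \partial_\mu f(t,x,\mu_t)(z)\cdot f(t,z,\mu_t)\,\mu_t(dz)$ contribution (differentiating under the integral sign, which is justified by the uniform bounds on $D^k_x\partial_\mu f$ for $k\le 3$). For the time $\alpha$-Hölder bounds on $\delta g$, $\delta D_x g$, \dots, $\delta D^4_x g$ and $\delta g'$, \dots, $\delta D^3_x g'$, split as $[h(t,x,\mu_t)-h(t,x,\mu_s)] + [h(t,x,\mu_s) - h(s,x,\mu_s)]$ and bound by Lipschitz-in-$\mu$ (times part~(i)) and the $\alpha$-Hölder-in-$t$ uniformity in the assumptions.

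\emph{Part (iii), the remainder bound.} This is the main step. Writing $R^g_{s,t}(x) = g_t(x) - g_s(x) - g'_s(x)\delta W_{s,t}$ and splitting
\[
f(t,x,\mu_t) - f(s,x,\mu_s) = \bigl[f(t,x,\mu_s) - f(s,x,\mu_s)\bigr] + \bigl[f(t,x,\mu_t) - f(t,x,\mu_s)\bigr],
\]
the first bracket equals $f'(s,x,\mu_s)\delta W_{s,t} + O(|t-s|^{2\alpha})$ by assumption (e). For the second bracket I apply \cref{lemma:remainderinTaylorexpansion} with the coupling $(Z^\mu_s, Z^\mu_t)$ (permissible since $\mathcal{L}_{Z^\mu_r}=\mu_r$) to obtain
\[
f(t,x,\mu_t) - f(t,x,\mu_s) = \mathbb{E}\bigl[\partial_\mu f(t,x,\mu_s)(Z^\mu_s)\cdot \delta Z^\mu_{s,t}\bigr] + O\bigl(\mathcal{W}_2(\mu_t,\mu_s)^2\bigr),
\]
and the remainder is $O(|t-s|^{2\alpha})$ by part (i). Using the controlled expansion $\delta Z^\mu_{s,t} = f(s,Z^\mu_s,\mu_s)\delta W_{s,t} + R^{Z^\mu}_{s,t}$, the leading Gubinelli-type term produces exactly $\int \partial_\mu f(s,x,\mu_s)(z)\cdot f(s,z,\mu_s)\,\mu_s(dz)\,\delta W_{s,t}$ after using the $\alpha$-Hölder-in-$t$ of $\partial_\mu f$ (which costs an additional $|t-s|^\alpha$ factor combining with $\delta W_{s,t}$ to give $|t-s|^{2\alpha}$). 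The residual term $\mathbb{E}[\partial_\mu f(t,x,\mu_s)(Z^\mu_s)\cdot R^{Z^\mu}_{s,t}]$ is controlled by conditioning: since $Z^\mu_s$ is $\mathcal{F}_s$-measurable, the tower property gives
\[
\bigl|\mathbb{E}[\partial_\mu f(t,x,\mu_s)(Z^\mu_s)\cdot R^{Z^\mu}_{s,t}]\bigr| \le \|\partial_\mu f\|_\infty \bigl\|\mathbb{E}[R^{Z^\mu}_{s,t} \mid \mathcal{F}_s]\bigr\|_1 \lesssim |t-s|^{2\alpha}.
\]
Cancellation of the $\delta W_{s,t}$ terms with $g'_s(x)\delta W_{s,t}$ then yields $|R^g_{s,t}(x)| \lesssim |t-s|^{2\alpha}$ uniformly in $x$. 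The analogous argument for $R^{Dg}$, $R^{D^2 g}$, $R^{D^3 g}$ proceeds identically by applying $D^k_x$ throughout, which is legitimate because \cref{assumptions_uniqueness}(b), (c), (d), (e) provide exactly the required regularity ($D^k_x f$, $D^k_x f'$, $D^k_x \partial_\mu f$ up to the needed orders, with matching time-controllability estimates).

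\emph{Main obstacle.} The delicate part is the remainder analysis: one must simultaneously exploit the $\mu$-Taylor expansion (via \cref{lemma:remainderinTaylorexpansion}), the stochastic controlled rough path structure of $Z^\mu$ (in particular the conditional expectation bound on $R^{Z^\mu}$ via tower property), and the time-controllability assumption (e), while keeping track that when differentiating the identity in $x$ up to order three, each derivative still produces a remainder of order $|t-s|^{2\alpha}$ with constants uniform in $x$.
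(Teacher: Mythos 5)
Your proposal is correct and follows essentially the same route as the paper: parts (i)--(ii) are the same one-line estimates using the controlled rough-path norm of $Z^\mu$, and for the remainder bound in (iii) you use the same key ingredients (the Taylor \cref{lemma:remainderinTaylorexpansion} with the coupling $\pi = \mathcal{L}_{(Z^\mu_s,Z^\mu_t)}$, the controlled expansion of $\delta Z^\mu_{s,t}$, the tower-property bound on $\|\E(R^{Z^\mu}_{s,t}\mid\mathcal{F}_s)\|$, and the time-controllability assumption (e)). The only difference is organisational: you split $g_t - g_s$ into two pieces, $[f(t,x,\mu_s)-f(s,x,\mu_s)]$ and $[f(t,x,\mu_t)-f(t,x,\mu_s)]$, and absorb the discrepancy between $\partial_\mu f(t,x,\mu_s)$ and $\partial_\mu f(s,x,\mu_s)$ using the $\alpha$-H\"older-in-$t$ bound from Assumption (d); the paper instead writes a three-way split in which a double increment $f(t,x,\mu_t)-f(t,x,\mu_s)-f(s,x,\mu_t)+f(s,x,\mu_s)$ appears as a separate $O(|t-s|^{2\alpha})$ term (controlled using (c) and (e)), so the measure-Taylor expansion is evaluated directly at time $s$. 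Both decompositions require exactly the same regularity and yield the same constant uniformity in $x$, so this is a cosmetic rather than substantive deviation.
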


\begin{proof} \textit{(i). }  For any $s,t \in [0,T]$ it holds that \begin{equation*}
    \mathcal{W}_2(\mu_t, \mu_s) \le \|Z^\mu_t - Z^\mu_s\|_2 \le \|\delta Z\|_{\alpha;2} |t-s|^\alpha .
\end{equation*}

\textit{(ii). } For any $\phi \in C^1(\R^d;\R)$ with $|\phi|_{Lip} \le R$ and for any $s,t \in [0,T]$, \begin{equation*}
    |\langle \mu_t - \mu_s, \phi \rangle| \le \E (|\phi(Z^\mu_t) - \phi(Z^\mu_s)|) \le R \|Z^\mu\|_{\alpha;2} |t-s|^\alpha .
\end{equation*}

    \textit{(iii). } For any $t \in [0,T]$, the map $x \mapsto g_t(x)$ belongs to $C_b^5(\R^d;\mathscr{L}(\R^n,\R^d))$ by assumption (b). In fact, it belongs to $C_b^4$ and, for any $x,y \in \R^d$, \begin{equation*}
        |D^4_x g_t(x) - D^4_x g_t(y)| = |D^4_x f(t, x,\mu_t) - D^4_x f(t, y,\mu_t)| \le |D^4_x f(t,\cdot,\mu_t)|_{Lip} |x-y| .
    \end{equation*}
    Similarly, from (c) and (d) it is possible to prove that $\R^d \ni x \mapsto g'_t(x)$ belongs to $C_b^4(\R^d;\mathscr{L}(\R^n,\R^d))$. 
    In this regard, note that, under our assumptions, \begin{equation*}
        D_x \left(\int_{\R^d} \partial_\mu f_t (\cdot,\mu_t) (z) \cdot f_t(z,\mu_t) \, \mu_t(dz)\right)(x) =\int_{\R^d} D_x \partial_\mu f_t (x,\mu_t) (z) \cdot f_t(z,\mu_t) \, \mu_t(dz) .
    \end{equation*}
    For any $0 \le s \le t \le T$ and uniformly in $x \in \R^d$, we have \begin{equation*}
        \begin{split}
            |g_t(x) - g_s(x)| &= |f(t, x,\mu_t) - f(s, x,\mu_s)| \\
             &\le |f(t, x,\mu_t) - f(s, x,\mu_t)| + |f(s, x,\mu_t) - f(s, x,\mu_s)| \\
             & \le \sup_{(x,\mu) \in \R^d \times \mathcal{P}_2(\R^d)} |f(t, x,\mu) - f(s, x,\mu)| + \sup_{(t,x) \in [0,T] \times \R^d}|f(t,x,\cdot)|_{Lip} \mathcal{W}_2(\mu_t,\mu_s) \\
             &\lesssim |t-s|^\alpha .
        \end{split}
    \end{equation*}
    Analogous computations show that $\sum_{l=1}^4 |D^l_x g_t(x) - D^l_x g_s(x)|  \lesssim |t-s|^\alpha$ uniformly in $x \in \R^d$. \\
    %
    o
    For any $0 \le s \le t \le T$ and for any $x \in \R^d$, we can write \begin{multline} \label{eq:decompositionoftheremainder_uniqueness}
        g_t(x) - g_s(x) - g'_s(x) \delta W_{s,t} = f(t, x,\mu_t) - f(t, x,\mu_s) - f(s, x,\mu_t) + f_s(x,\mu_s) + \\ +f(s, x,\mu_t) - f(s, x,\mu_s)  - \int_{\R^d} \partial_\mu f(s, x,\mu_s) (z) \cdot (f(s, z,\mu_s) \delta W_{s,t}) \, \mu_s(dz) + \\ + f(t, x,\mu_t) - f(s, x,\mu_t)  - f'(s, x,\mu_t) \delta W_{s,t} . 
    \end{multline}
    From \cref{lemma:remainderinTaylorexpansion} and for any coupling $\pi$ of $\mu_t$ and $\mu_s$, we have that \begin{multline*}
        \left| f(s, x,\mu_t) - f(s, x,\mu_s)  - \int_{\R^d} \partial_\mu f(s, x,\mu_s) (z) \cdot (f(s, z,\mu_s) \delta W_{s,t}) \, \mu_s(dz) \right| \\ \le \left| \int_{\R^d} \partial_\mu f(s, x,\mu_s) (z) \cdot (w-v-f(s, z,\mu_s) \delta W_{s,t}) \, \pi (dw,dz) \right| + \\ + 2 |\partial_\mu f(s, x,\cdot)|_{Lip} \mathcal{W}_2(\mu_t,\mu_s)^2 .
    \end{multline*}
    Choosing $\pi=\mathcal{L}_{(Z^\mu_t,Z^\mu_s)}$, we can conclude that \begin{align*}
        \Big| & \int_{\R^d}  \partial_\mu f(s, x,\mu_s) (z) \cdot (w-v-f(s, z,\mu_s) \delta W_{s,t}) \, \mathcal{L}_{(Z^\mu_t,Z^\mu_s)}(dw,dz) \Big| = \\ 
        &= \left| \E( \partial_\mu f(s, x,\mu_s) (Z^\mu_s) \cdot (Z^\mu_t-Z^\mu_s-f(s, Z^\mu_s,\mu_s) \delta W_{s,t}) ) \right| \\
        &\le \sup_{t,x,\mu,v} |\partial_\mu f(t, x,\mu)(v)| \  \|\E( Z^\mu_t-Z^\mu_s-f(s, Z^\mu_s,\mu_s) \delta W_{s,t} \mid \mathcal{F}_s)\|_2 \lesssim |t-s|^{2\alpha}
    \end{align*}
    where the implicit constant is uniform in $x \in \R^d$. 
    Moreover, due to assumption (e), \begin{equation*}
        \sup_{x \in \R^d} |f(t, x,\mu_t) - f(s, x,\mu_t)  - f'(s, x,\mu_t) \delta W_{s,t}| \lesssim |t-s|^{2\alpha}
    \end{equation*} and, considering also (c), \begin{equation*}
        \sup_{x \in \R^d} |f(t, x,\mu_t) - f(t, x,\mu_s) - f(s, x,\mu_t) + f(s, x,\mu_s)| \lesssim |t-s|^{2\alpha} .
    \end{equation*}
    Notice that the decomposition in \eqref{eq:decompositionoftheremainder_uniqueness} can be repeated by replacing $g$ and $g'$ with $D_x g$ and $D_x g'$, respectively. (In fact the conclusion follows also for second and third order derivatives with respect to $x$.) To do so, assumption (d) is highly taken into account . 
    Hence we obtain that \begin{equation*}
        |g_t(\cdot) - g_s(\cdot) - g'_s(\cdot) \delta W_{s,t}|_{C^3_b} \lesssim|t-s|^{2\alpha} .
    \end{equation*}
\end{proof}

\section{Linear rough PDEs} \label{section:linearroughPDEs}

In this section we study well-posedness of the linear version of \eqref{eq:meanfieldroughPDEs}, namely in the case of measure-independent coefficients.
Our focus in on uniqueness, since it is the key ingredient to have uniqueness in the nonlinear case and it is used in the proof of \cref{thm:uniqueness_meanfieldroughPDEs}. 
Let $\mathbf{W}=(W,\mathbb{W}) \in \mathscr{C}_g^\alpha([0,T];\R^n)$ be a weakly geometric rough path with $\alpha \in (\frac{1}{3}, \frac{1}{2}]$. 
The equation we consider is of the form \begin{equation} \label{eq:solution_linearroughPDE}
    d\mu_t = \left(\frac{1}{2} \partial^2_{ij} (a^{ij}(t,\cdot) \mu_t) - \partial_i (b^i(t,\cdot) \mu_t) \right) \, dt - \partial_i (f^i(t,\cdot) \mu_t ) \, d\mathbf{W}_t
\end{equation} 
for some coefficients \begin{equation*}
    (b,\sigma,f,f'): [0,T] \times \R^d \to \R^d \times \mathscr{L}(\R^m,\R^d) \times \mathscr{L}(\R^n,\R^d) \times \mathscr{L}(\R^n \otimes \R^n,\R^d)
\end{equation*}
with $a = \sigma \sigma^T$. 
The notion of solution is the one obtained by considering measure-independent coefficients in \cref{def:solution_meanfieldroughPDE} and the proof of the existence of a solution for \eqref{eq:solution_linearroughPDE} is completely analogous to the nonlinear case (see \cref{thm:existence_meanfieldroughPDE}). 

\begin{rmk}
    Let $\mu$ be a solution to \eqref{eq:meanfieldroughPDEs} in the sense of \cref{def:solution_meanfieldroughPDE} below. 
    For $g \in \{b,\sigma,f,f'\}$ define $g^\mu(t,x) := g(t,x,\mu_t)$. 
    Hence $\mu$ solves the linearized equation 
    $$d\mu_t = \left(\frac{1}{2} \partial^2_{ij} ((a^\mu)^{ij}(t,\cdot) \mu_t) -\partial_i ((b^\mu)^i(t,\cdot)\mu_t) \right) \, dt -  \partial_i ((f^\mu)^i(t,\cdot) \mu_t) \, d\mathbf{W}_t.$$
    Substituting $\mu$ into the measure-valued component of the coefficients introduces an additional time dependence, which remains unavoidable even if the original coefficients were time-independent.
    Our theory on linear forward-in-time measure-valued rough PDEs can be therefore seen as an improvement of the time-independent theory in \cite{DFS17}. 
\end{rmk} 
 
We prove uniqueness for \eqref{eq:solution_linearroughPDE} by a duality argument. 
The dual equation on the time interval $[0,T]$ is a backward function-valued rough PDE of the form \begin{equation} \label{eq:backwardlinearroughPDE}
        - du_t(x) = \left( \frac{1}{2}  a^{ij}(t,x) \partial^2_{ij} u_t(x)  + b^i(t,x) \partial_i u_t(x) \right) \, dt + f^i(t,x) \partial_i u_t(x) \, d\mathbf{W}_t .
\end{equation}  

\begin{defn} \label{def:linearroughPDE_solution}
    (see \cite[Definition 4.1]{BFS24})
    A solution to \eqref{eq:backwardlinearroughPDE} is a map $u:[0,T] \times \R^d \to \R$ satisfying the following, for any $0 \le s \le t \le T$ and uniformly in $x \in \R^d$: \begin{enumerate}\renewcommand{\labelenumi}{(\roman{enumi})}
        \item for any $\kappa,\lambda=1,\dots,n$ \begin{multline*}
            f_\kappa^j(s,x) \partial_j (f_\lambda^i(s,\cdot) u_s)(x) - (f_{\kappa\lambda}')^i(s,x) \partial_i u_s(x) \overset{\alpha}{=}  f_\kappa^j(t,x) \partial_j (f_\lambda^i(t,\cdot) u_t)(x) - (f_{\kappa\lambda}')^i(t,x) \partial_i u_t(x) ;
        \end{multline*} and 
        \begin{multline*}
            f_\kappa^i(s,x) \partial_i u_s(x) - f_\kappa^i (t,x) \partial_i u_t(x)  \overset{2\alpha}{=}    \big(f_\kappa^j(t,x) \partial_j (f_\lambda^i(t,\cdot) \partial _iu_t(\cdot))(x) - (f_{\kappa\lambda}')^i(t,x) \partial_i u_t(x) \big) \delta W^\lambda_{s,t} ;
        \end{multline*}
        \item there is $u^\natural_{s,t}(x)$ such that $\sup_{x \in \R^d}|u^\natural_{s,t}(x)|\lesssim |t-s|^{3\alpha}$ and \begin{multline*}
            u_s(x) - u_t(x) = \int_s^t \underbrace{\frac{1}{2} a^{ij}(r,x) \partial^2_{ij}u_r(x) + b^i(r,x) \partial_i u_r(x)}_{= L_r u_r(x)} \, dr  + f^i_\kappa(t,x) \partial_i u_t(x) \, \delta W^\kappa_{s,t} + \\ + \big(f_\kappa^j(t,x) \partial_j (f_\lambda^i(t,\cdot) \partial_i u_t)(x)   - (f_{\kappa\lambda}')^i(t,x) \partial_i u_t(x) \big) \mathbb{W}^{\kappa \lambda}_{s,t} + u^\natural_{s,t}(x).
        \end{multline*}
    \end{enumerate}
    When the final-time condition $u_T = g \in C^0(\R^d;\R^d)$ is specified, we say that $u$ is a solution to \eqref{eq:backwardlinearroughPDE} with final-time condition $g$. 
\end{defn}

Equations of this type have been extensively studied in \cite{BFS24}. In particular, well posedness holds under the following \begin{assumptions} \label{assumptionsuniqueness_linearcase}
    Assume that $$[t \mapsto b(t,\cdot)] \in C^0([0,T]; C_b^2(\R^d;\R^d)) \cap \mathrm{Bd}([0,T];C^3_b(\R^d;\R^d)),$$ with similar conditions on $\sigma$, and assume that $(f,f') \in \mathscr{D}_W^{2\alpha}C_b^5(\R^d;\mathscr{L}(\R^n,\R^d)).$ 
\end{assumptions}

\begin{thm} \label{thm:existence_backwardlinearPDE}
    (see \cite[Proposition 4.8]{BFS24} and \cite[Theorem 4.9]{BFS24}) 
    Under \cref{assumptionsuniqueness_linearcase} and for any $g \in C^3_b(\R^d;\R)$, there exists a (unique)  solution $u$ to equation \eqref{eq:backwardlinearroughPDE} with final-time condition $g$. 
    Moreover, such a solution belongs to $C^{0,2}_b([0,T] \times \R^d; \R) \cap \mathrm{Bd}([0,T]; C^3_b(\R^d;\R))$ \footnote{$C^{0,2}_b([0,T] \times \R^d; \R)$ is the space of functions $g:[0,T] \times \R^d \to \R$ such that $g(t,\cdot) \in C^2(\R^d;\R)$ for any $t \in [0,T]$ and $  g, D_xg , D^2_x g \in C^0_b([0,T] \times \R^d;\R)$.} and the mapping  $[0,T] \ni t \mapsto u_t:= u(t,\cdot) \in C^2_b(\R^d;\R)$ is $\alpha$-H\"older continuous. 
\end{thm}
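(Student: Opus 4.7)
The plan is to construct the solution via a stochastic representation associated to the rough SDE whose generator is exactly the operator appearing on the right-hand side of \eqref{eq:backwardlinearroughPDE}, and then verify both regularity and the Davie-type expansion of \cref{def:linearroughPDE_solution} directly from the regularity of the rough flow.

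First I would fix a filtered probability space $\mathbf{\Omega}$ carrying an $m$-dimensional Brownian motion $B$ independent of $\mathbf{W}$. For each $(s,x) \in [0,T] \times \R^d$, consider the forward rough SDE
\begin{equation*}
    X^{s,x}_t = x + \int_s^t b(r,X^{s,x}_r)\, dr + \int_s^t \sigma(r,X^{s,x}_r)\, dB_r + \int_s^t (f,f')(r,X^{s,x}_r)\, d\mathbf{W}_r,\quad t\in[s,T],
\end{equation*}
which under \cref{assumptionsuniqueness_linearcase} is well-posed by the standard rough SDE theory of \cite{FHL21}. Define the candidate solution
\begin{equation*}
    u(s,x) := \E\bigl[g(X^{s,x}_T)\bigr].
\end{equation*}
The $C^5_b$ rough controlled regularity of $(f,f')$ together with $C^3_b$ regularity of $b,\sigma$ transfers to differentiability of the rough flow $x\mapsto X^{s,x}_\cdot$ up to order $3$ (this is the analogue of the classical Kunita result in the rough setting, cf.\ differentiation of rough SDEs with respect to initial data). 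Combined with $g \in C^3_b$, this yields $\sup_{s \in [0,T]} |u(s,\cdot)|_{C^3_b} < +\infty$. Joint continuity in $(s,x)$ of $u, D_x u, D^2_x u$ follows from the continuous dependence of $X^{s,x}_T$ on $(s,x)$, giving $u \in C^{0,2}_b \cap \mathrm{Bd}([0,T];C^3_b)$.

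For the $\alpha$-Hölder continuity in time of $t \mapsto u_t \in C^2_b$, I would exploit the semigroup identity $u(s,x) = \E[u(t,X^{s,x}_t)]$ for $s \le t$, together with the a priori Davie estimate $\|\delta X^{s,x}\|_{\alpha;p,\infty}<\infty$ for the flow, and differentiate up to order $2$ in $x$. For the verification of \cref{def:linearroughPDE_solution}, I would apply the rough It\^o formula of \cite[Theorem 4.13]{FHL21} to $r \mapsto u(r,X^{s,x}_r)$ on $[s,T]$. The tower property and the fact that $X^{s,x}_s=x$ is deterministic give $u(s,x)-u(t,x) = \E[u(t,X^{s,x}_t) - u(t,x)]$; taking conditional expectations of the Davie expansion and using the vanishing of the stochastic It\^o term produces precisely the dt-integral of $L_r u_r$, the $\delta W_{s,t}$-term with coefficient $f^i\partial_i u$, and a second-order $\mathbb{W}_{s,t}$-term. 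A short computation, using the Leibniz rule for $f^j\partial_j(f^i\partial_i u)$ and the $f'$-correction coming from the non-geometric part of the expansion of $f$, identifies the latter with the postulated bracket expression appearing in (ii) of \cref{def:linearroughPDE_solution}. Finally, uniqueness follows from the same rough It\^o argument applied to the difference of two solutions tested against the forward flow: the PDE structure annihilates the leading Davie terms, leaving a remainder of order $o(|t-s|)$ which forces $u \equiv \bar u$.

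The main obstacle is step (iv) of the verification: one must show that the pair $\bigl(f^i\partial_i u,\ f^j\partial_j(f^i\partial_i u) - (f')^i \partial_i u\bigr)$ is, along the flow $X^{s,x}$, a stochastic controlled rough path of the right integrability, so that the rough It\^o formula genuinely applies and delivers a remainder of order $|t-s|^{3\alpha}$. This is where the hypothesis $(f,f') \in \mathscr{D}_W^{2\alpha}C_b^5$ (rather than $C_b^3$) is used: two derivatives are consumed to write the rough integral against $d\mathbf{W}$, and the extra regularity is needed to close the Gubinelli derivatives of $Du$ and $D^2 u$ appearing in the composition. Once this bookkeeping is carried out, both existence and uniqueness follow in one stroke, matching the statement of \cite[Proposition 4.8]{BFS24} and \cite[Theorem 4.9]{BFS24}.
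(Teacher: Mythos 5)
Your Feynman--Kac sketch coincides with the strategy used in the cited reference [BFS24]: construct the candidate $u(s,x) := \E[g(X^{s,x}_T)]$ from the forward rough SDE, obtain spatial $C^3_b$ regularity from differentiability of the rough flow (two degrees of regularity of $(f,f')$ consumed by the rough integration, leaving $C^3$-type control), propagate $\alpha$-H\"older time-regularity in $C^2_b$ via the semigroup identity, and verify the Davie expansion of \cref{def:linearroughPDE_solution} through the rough It\^o formula followed by conditional expectation. You have also correctly isolated the genuine technical hurdle, namely showing that $\bigl(f^i\partial_i u,\,f^j\partial_j(f^i\partial_i u) - (f')^i\partial_i u\bigr)$ composed with the flow is a controlled rough path of the right integrability, which is exactly where the $C^5_b$ hypothesis on $(f,f')$ enters.

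Two remarks on precision. First, the phrase ``Brownian motion $B$ independent of $\mathbf{W}$'' is vacuous here: $\mathbf{W}$ is a \emph{deterministic} weakly geometric rough path, so there is nothing to be independent of; the picture you seem to have in mind (freezing a trajectory of a common noise) belongs to the motivation, not to the proof. Second, the flow-differentiability input you invoke (``the analogue of the classical Kunita result in the rough setting'') is not in \cite{FHL21} as stated; it is a nontrivial supplement and deserves an explicit pointer (it is established in [BFS24] itself in the hybrid rough/stochastic setting). Similarly, your uniqueness paragraph would read more cleanly if stated as: any solution $u$ in the sense of \cref{def:linearroughPDE_solution} admits the Feynman--Kac representation, obtained by applying the rough It\^o formula to $r \mapsto u(r,X^{s,x}_r)$ and observing that the resulting increment estimate $|t-s|^{3\alpha}$, with $3\alpha > 1$, forces $t \mapsto \E[u(t,X^{s,x}_t)]$ to be constant; evaluating at $t = s$ and $t = T$ identifies $u$ with the constructed solution. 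These are presentational points, not gaps in the argument.
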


\begin{thm} \label{prop:duality}
    Let \cref{assumptionsuniqueness_linearcase} hold. 
    Then, for any $\nu \in \mathcal{P}_2(\R^d)$, there is at most one solution $\mu$ to \eqref{eq:solution_linearroughPDE} starting from $\nu$ such that 
    \begin{equation} \label{eq:Wassersteincondition}
        \sup_{\phi \in C^1,  |\phi|_{Lip} \le R} |\langle \mu_t- \mu_s , \phi \rangle | \overset{\alpha}{=} 0
        \qquad \text{for any $R>0$} \footnote{This condition implies that  $\mathcal{W}_1(\mu_t,\mu_s)\overset{\alpha}{=} 0$.
        Indeed, it is sufficient to take $R=1$ and to recall the Kantorovich duality theorem for 1-Wasserstein distances (see \cite[Corollary 5.4]{CARMONADELARUE_volumeI}), according to which $$\mathcal{W}_1(\mu_t,\mu_s) = \sup_{\phi \in C^1, |\phi|_{Lip} \le 1} \langle \mu_t- \mu_s , \phi \rangle . $$ } . 
    \end{equation}
\end{thm}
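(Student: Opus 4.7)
The plan is a duality argument: I will test $\mu$ against solutions $u$ of the backward dual rough PDE \eqref{eq:backwardlinearroughPDE}. By \cref{thm:existence_backwardlinearPDE}, for any final-time condition $g \in C^3_b(\R^d;\R)$ and any $t_0 \in [0,T]$, there is a solution $u$ on $[0,t_0]$ lying in $C^{0,2}_b([0,t_0]\times\R^d;\R) \cap \mathrm{Bd}([0,t_0]; C^3_b(\R^d;\R))$ with $t \mapsto u_t$ being $\alpha$-H\"older into $C^2_b(\R^d;\R)$. Suppose $\mu,\bar\mu$ are two solutions to \eqref{eq:solution_linearroughPDE} starting from $\nu$, both satisfying \eqref{eq:Wassersteincondition}. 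I will show that $h(t):=\langle \mu_t, u_t\rangle$ is constant on $[0,t_0]$, hence $\langle \mu_{t_0}, g\rangle = \langle \nu, u_0\rangle$; the same identity for $\bar\mu$ yields $\langle \mu_{t_0}-\bar\mu_{t_0}, g\rangle = 0$, and since $g \in C^3_b(\R^d;\R)$ is arbitrary and $C^\infty_c(\R^d) \subset C^3_b(\R^d;\R)$, this forces $\mu_{t_0}=\bar\mu_{t_0}$ as Borel probability measures.

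To prove that $h$ is constant I aim at the incremental bound $|h(t)-h(s)|\lesssim |t-s|^{\theta}$ for some $\theta>1$, uniformly in $0 \le s \le t \le t_0$; telescoping over any partition and letting its mesh tend to zero then gives $h(t_0)=h(0)$. Starting from
\begin{equation*}
h(t)-h(s) \;=\; \langle \mu_t - \mu_s,\, u_s\rangle \;+\; \langle \mu_t,\, u_t - u_s\rangle,
\end{equation*}
I would expand the first bracket by the forward Davie expansion (condition (iii) of \cref{def:solution_meanfieldroughPDE} applied to the frozen test function $\varphi=u_s$, which stays in a bounded set of $C^3_b(\R^d;\R)$ as $s$ varies, with $3 \in (\tfrac{1}{\alpha},3]$) and the second bracket by the backward Davie expansion (condition (ii) of \cref{def:linearroughPDE_solution}), paired against $\mu_t$. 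The two $O(|t-s|^{3\alpha})$ remainders are harmless since $3\alpha > 1$. After recombination, the drift reads $\int_s^t \bigl[\langle \mu_r - \mu_t,\, L_r u_r\rangle + \langle \mu_r,\, L_r(u_s - u_r)\rangle\bigr]\, dr$; the first summand is $O(|t-s|^\alpha)$ by \eqref{eq:Wassersteincondition} applied with $\phi = L_r u_r \in C^1_b(\R^d;\R)$ (whose Lipschitz seminorm is uniformly controlled under \cref{assumptionsuniqueness_linearcase} and the uniform $C^3_b$-bound on $u_r$), and the second by the $\alpha$-H\"older regularity of $r \mapsto u_r$ in $C^2_b$. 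Hence the drift contributes $O(|t-s|^{1+\alpha})$.

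The heart of the argument is the cancellation of the remaining rough terms at levels $\delta W$ and $\mathbb{W}$. The level-one difference $\bigl(\langle \mu_s, \Gamma_s u_s\rangle - \langle \mu_t, \Gamma_t u_t\rangle\bigr)\delta W_{s,t}$ can be analyzed by applying condition (ii) of \cref{def:solution_meanfieldroughPDE} to $\varphi = u_s$ for the $\mu$-increment, and condition (i) of \cref{def:linearroughPDE_solution} to express $\Gamma_s u_s - \Gamma_t u_t = (\Gamma_t\Gamma_t - \Gamma'_t)u_t\,\delta W_{s,t} + O(|t-s|^{2\alpha})$ in sup norm, from which one obtains a leftover proportional to $\delta W_{s,t} \otimes \delta W_{s,t}$. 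This leftover must be matched against the level-two contributions $\langle \mu_s, (\Gamma_s\Gamma_s + \Gamma'_s)u_s\rangle \mathbb{W}_{s,t}$ (forward) and $-\langle \mu_t, (\Gamma_t\Gamma_t - \Gamma'_t)u_t\rangle \mathbb{W}_{s,t}$ (backward). Here the weakly geometric identity $\mathrm{Sym}(\mathbb{W}_{s,t}) = \tfrac{1}{2}\delta W_{s,t} \otimes \delta W_{s,t}$, combined with the fact that the $\Gamma'$ terms appear with opposite signs in the forward (paired with $\mathbb{W}^{\lambda\kappa}$) and backward (paired with $\mathbb{W}^{\kappa\lambda}$) expansions, ensures that all rough terms cancel up to a residue of order $|t-s|^{3\alpha}$. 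This careful second-order bookkeeping is the main obstacle; once it is carried out one obtains $|h(t)-h(s)| \lesssim |t-s|^{(1+\alpha) \wedge (3\alpha)}$, which is the required superlinear bound, and the telescoping argument concludes the proof.
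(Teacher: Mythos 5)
Your strategy is the same as the paper's: dualize against the backward rough PDE from \cref{thm:existence_backwardlinearPDE}, show $h(t)=\langle\mu_t,u_t\rangle$ is $3\alpha$-H\"older, and telescope (here $3\alpha>1$, the relevant superlinear exponent). The only genuine difference is the organization of the algebra. You expand $h(t)-h(s)=\langle\delta\mu_{s,t},u_s\rangle+\langle\mu_t,\delta u_{s,t}\rangle$, pairing the backward Davie expansion against $\mu_t$; the paper instead uses the three-term split $\langle\mu_s,\delta u_{s,t}\rangle+\langle\delta\mu_{s,t},u_s\rangle+\langle\delta\mu_{s,t},\delta u_{s,t}\rangle$, pairs the backward expansion against $\mu_s$, and isolates the cross term $\langle\delta\mu_{s,t},\delta u_{s,t}\rangle$, which it then treats by a fresh application of the forward Davie expansion to the small test function $\varphi=\delta u_{s,t}$. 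The two decompositions are algebraically equivalent; the paper's version is cleaner precisely because isolating the cross term avoids the need to move the reference measure from $\mu_t$ to $\mu_s$ in the middle of the $\delta W$-level analysis.

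Where you are imprecise: you cannot apply condition (ii) of \cref{def:solution_meanfieldroughPDE} with $\varphi=u_s$ to the quantity $\langle\mu_s,\Gamma_su_s\rangle-\langle\mu_t,\Gamma_tu_t\rangle$, since both the measure and the test function vary. One must first split, e.g.\ write it as $\langle\mu_s,\Gamma_su_s-\Gamma_tu_t\rangle-\langle\mu_t-\mu_s,\Gamma_tu_t\rangle$, treat the first piece by condition (i) of \cref{def:linearroughPDE_solution}, and treat the second by condition (ii) of \cref{def:solution_meanfieldroughPDE} \emph{with $\varphi=u_t$} together with the controlledness of $f$ in time (to pass from $\Gamma_t u_t$ to $\Gamma_s u_t$). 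Carrying that out, the contribution of $\Gamma'_s$ from the forward condition (ii) cancels against the $f'_s\delta W$ term coming from $\Gamma_s-\Gamma_t$, and what survives is exactly the $\Gamma\Gamma\,\delta W\otimes\delta W$ and $\Gamma'\,\delta W\otimes\delta W$ pieces that are then absorbed by the $\mathbb{W}$-level contributions via $\mathrm{Sym}(\mathbb{W}_{s,t})=\tfrac12\,\delta W_{s,t}\otimes\delta W_{s,t}$ and the index transposition between the forward $\Gamma'_{\lambda\kappa}\mathbb{W}^{\kappa\lambda}$ and the backward $\Gamma'_{\kappa\lambda}\mathbb{W}^{\kappa\lambda}$, modulo $O(|t-s|^{3\alpha})$. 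You have correctly identified that mechanism (weak geometricity plus the transposed index structure of $\Gamma'$), but you have not carried out the bookkeeping, which is the substantive part of the argument; when done, it does close and yields $|h(t)-h(s)|\lesssim|t-s|^{3\alpha}$ rather than merely $|t-s|^{(1+\alpha)\wedge 3\alpha}$ (every term is in fact controlled at order $3\alpha$).
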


\begin{proof}
    Let $\mu^1,\mu^2$ be two solutions to \eqref{eq:solution_linearroughPDE} starting from $\nu \in \mathcal{P}_2(\R^d)$ and satisfying \eqref{eq:Wassersteincondition}. 
    We aim at showing that $\mu^1_\tau = \mu^2_\tau$ for any $\tau \in [0,T]$ by proving that $\langle \mu^1_\tau , \varphi \rangle = \langle \mu^2_\tau , \varphi \rangle$ for any $\varphi \in C^\infty_c(\R^d;\R)$. 
    Fix $\tau \in [0,T]$ and $\varphi \in C^\infty_c(\R^d;\R)$. 
    Under the assumptions of this proposition, we get the existence of a solution $u:[0,\tau] \times \R^d \to \R$ to the following rough PDE on $[0,\tau]$  \begin{equation} \label{eq:backwardroughPDE_duality}
        - du_t(x) = \left( \frac{1}{2}  a^{ij}(t,x) \partial^2_{ij} u_t(x)  + b^i(t,x) \partial_i u_t(x) \right) \, dt + f^i(t,x) \partial_i u_t(x) \, d\mathbf{W}_t  
    \end{equation}
    with final-time condition $u(\tau,\cdot) = \varphi$ (see \cref{thm:existence_backwardlinearPDE}).    The conclusion follows if we prove that the map $[0,\tau] \ni t \mapsto \langle \mu^i_t, u_t \rangle = \int_{\R^d} u_t(x) \, \mu^i_t(dx)$ is constant ($i=1,2$). Indeed, we would have \begin{equation*}
        \langle \mu^1_\tau , \varphi \rangle = \langle \mu^1_\tau , u_\tau \rangle = \langle \mu^1_0 , u_0 \rangle = \langle \nu , u_0 \rangle = \langle \mu^2_0 , u_0 \rangle = \langle \mu^2_\tau , u_\tau \rangle = \langle \mu^2_\tau , \varphi \rangle .
    \end{equation*}
    The rest of the proof works for $\mu=\mu^i, \ i=1,2$. 
    We aim at proving that $[0,\tau] \ni t \mapsto \langle \mu_t, u_t \rangle \in \R$ is $(1+\varepsilon)$-H\"older continuous for some $\varepsilon >0$. 
    Let $s,t \in [0,\tau]$ with $s \le t$.  We can write \begin{equation*}
        \langle \mu_t, u_t \rangle - \langle \mu_s, u_s \rangle = \langle \mu_s, \delta u_{s,t} \rangle + \langle \delta \mu_{s,t}, u_s \rangle + \langle \delta \mu_{s,t}, \delta u_{s,t} \rangle .
    \end{equation*}
    From the fact that $u$ solves \eqref{eq:backwardroughPDE_duality} and $\mu_s$ is a probability measure on $\R^d$, it follows
    \begin{multline*}
        \langle \mu_s, \delta u_{s,t} \rangle = - \langle \mu_s , \int_s^t  L_r u_r \, dr \rangle - \langle \mu_s , f_\kappa^i(t,\cdot) \partial_i u_t \delta W^\kappa_{s,t} \rangle  + \\ 
        - \langle \mu_s, f^i_\kappa(t,\cdot) \partial_i(f^j_\lambda(t,\cdot) \partial_j u_t) \mathbb{W}^{\kappa\lambda}_{s,t} \rangle  +  \langle \mu_s, (f_{\kappa\lambda}')^i(t,\cdot) \partial_i u_t \mathbb{W}^{\kappa\lambda}_{s,t} \rangle  + \langle \mu_s, u^\natural_{s,t} \rangle ,
    \end{multline*}
    where $|\langle \mu_s, u^\natural_{s,t} \rangle| \le \sup_{x \in \R^d} |u_{s,t}^\natural(x)| \overset{3\alpha}{=} 0$. 
    From the fact that $\mu$ solves \eqref{eq:solution_linearroughPDE} and $\sup_{t \in [0,\tau]} |u_t|_{C^3_b} < +\infty$ by \cref{thm:existence_backwardlinearPDE}, we have
    \begin{multline*}
        \langle \delta \mu_{s,t}, u_s \rangle = \int_s^t \langle \mu_r , L_r u_s \rangle \, dr + \langle \mu_s, f^i_\kappa (s,\cdot) \partial_i u_s \rangle \delta W^\kappa_{s,t} + \\ + \langle \mu_s , f_\kappa^i (s,\cdot) \partial_i( f^j_\lambda (s,\cdot) \partial_ju_s) \rangle \mathbb{W}^{\kappa\lambda}_{s,t} +  \langle \mu_s, (f_{\lambda\kappa}')^i (s,\cdot) \partial_i u_s \rangle \mathbb{W}_{s,t}^{\kappa\lambda} + \mu_{s,t}^{\natural,u_s} ,
    \end{multline*}
    with $|\mu_{s,t}^{\natural,u_s} | \overset{3\alpha}{=} 0$. 
    Notice how \begin{multline*}
        \int_s^t \langle \mu_r, L_r u_s \rangle \, dr - \langle \mu_s , \int_s^t  L_r u_r \, dr \rangle = \int_s^t \langle \mu_r, L_r (u_s-u_r) \rangle \, dr + \int_s^t \langle \mu_r - \mu_s ,   L_r u_r  \rangle \, dr \overset{1+\alpha}{=} 0,  
    \end{multline*}
    where the last equality follows from \cref{thm:existence_backwardlinearPDE} and noting that $x \mapsto L_t u_t(x)$ belongs to $C^1_b$ uniformly in $t$. 
    Moreover, being $u$ a solution to \eqref{eq:backwardroughPDE_duality} and by the assumptions on $(f,f')$, \begin{equation*}
        \langle \mu_s, f^i_\kappa (s,\cdot) \partial_i u_s  \rangle - \langle \mu_s, f^i_\kappa (t,\cdot) \partial_i u_t  \rangle \overset{2\alpha}{=} \langle \mu_s, f^i_\kappa (t,\cdot) \partial_i( f_\lambda^j(t,\cdot) \partial_j u_t) - (f'_{\kappa\lambda})^i(t,\cdot) \partial_i u_t \rangle \delta W^\lambda_{s,t}
    \end{equation*} and \begin{multline*}
        \langle \mu_s , f_\kappa^i (s,\cdot) \partial_i( f^j_\lambda (s,\cdot) \partial_ju_s) +(f_{\lambda\kappa}')^i (s,\cdot) \partial_i u_s \rangle -  \langle \mu_s, f^i_\kappa(t,\cdot) \partial_i(f^j_\lambda(t,\cdot) \partial_j u_t) - (f_{\kappa\lambda}')^i(t,\cdot) \partial_i u_t \rangle = \\ \overset{\alpha}{=} \langle \mu_s , (f'_{\kappa \lambda})^i (t,\cdot) \partial_i u_t \rangle +  \langle \mu_s , (f'_{\lambda \kappa})^i (t,\cdot) \partial_i u_t \rangle. 
    \end{multline*}
    Hence \begin{multline*}
        \langle \mu_s, \delta u_{s,t} \rangle + \langle \delta \mu_{s,t}, u_s \rangle \overset{3\alpha}{=} \langle \mu_s, f^i_\kappa (t,\cdot) \partial_i( f_\lambda^j(t,\cdot) \partial_j u_t) \rangle \delta W^\lambda_{s,t} \delta W^\kappa_{s,t} + \\ + \langle \mu_s , (f'_{\kappa \lambda})^i (t,\cdot) \partial_i u_t \rangle (\mathbb{W}^{\kappa \lambda}_{s,t} + \mathbb{W}^{\lambda \kappa}_{s,t} - \delta W^\kappa_{s,t} \delta W^\lambda_{s,t} ).
    \end{multline*}
    Recall that $\mathbf{W}$ is weakly geometric and, therefore, $\mathbb{W}^{\kappa\lambda}_{s,t} + \mathbb{W}^{\lambda\kappa}_{s,t} = \delta W^\lambda_{s,t} \delta W^\kappa_{s,t}$.
    Considering the regularity of $u$ coming from \cref{thm:existence_backwardlinearPDE}, together with condition (ii) of \cref{def:solution_meanfieldroughPDE} and the assumptions on $b,\sigma,(f,f')$,
    we can show that \begin{equation*}
        \begin{split}
            \langle \delta \mu_{s,t},  \delta u_{s,t} \rangle &= \int_s^t \langle \mu_r ,   L_r \delta u_{s,t}  \rangle \, dr + \langle \mu_s , f^i_\kappa(s,\cdot) \partial_i \delta u_{s,t} \rangle \delta W^\kappa_{s,t} + \\
            &\quad  + \langle \mu_s , f_\kappa^i (s,\cdot) \partial_i (f_\lambda^j (s,\cdot) \partial_j \delta u_{s,t}) + (f'_{\kappa\lambda})^i(s,\cdot) \partial_i \delta u_{s,t} \rangle \mathbb{W}^{\kappa\lambda}_{s,t} + \mu_{s,t}^{\natural,\delta u_{s,t}} = \\
            &\overset{3\alpha}{=} \langle \mu_s , f^i_\kappa(t,\cdot) \partial_i u_t - f^i_\kappa(s,\cdot) \partial_i u_s  + (f^i_\kappa(s,\cdot)- f^i_\kappa(t,\cdot)) \partial_i u_t \rangle \delta W^\kappa_{s,t} = \\
            &\overset{3\alpha}{=} - \langle \mu_s , f_\kappa^j (t,\cdot) \partial_j(f_\lambda^i(t,\cdot) \partial_i u_t) \rangle \delta W^\lambda_{s,t} \delta W^\kappa_{s,t}.   
        \end{split}
    \end{equation*}
    The statement is proved, since we showed that $\langle \mu_t , u_t \rangle - \langle \mu_s , u_s \rangle \overset{3\alpha}{=} 0$. 
\end{proof}

\printbibliography

\end{document}